\newtheorem{theorem}{Theorem}[section]
\newtheorem{corollary}[theorem]{Corollary}
\newtheorem{proposition}[theorem]{Proposition}
\newtheorem{lemma}[theorem]{Lemma}
\theoremstyle{definition}    
\newtheorem{definition}[theorem]{Definition}
\theoremstyle{remark}
\newtheorem{remark}[theorem]{Remark}
\newtheorem{example}[theorem]{Example}
\newcommand{\ignore}[1]{}
\renewcommand{\i}{{\mathrm{i}}}
\def\bC{\ensuremath{\mathbb{C}}}
\def\bR{\ensuremath{\mathbb{R}}}
\def\bZ{\ensuremath{\mathbb{Z}}}
\def\bT{\ensuremath{\mathbb{T}}}
\def\ker{\ensuremath{\textnormal{ker}}}
\def\DNC{\ensuremath{\textnormal{DNC}}}
\def\tr{\ensuremath{\textnormal{tr}}}
\def\dim{\ensuremath{\textnormal{dim}}}
\def\Str{\ensuremath{\textnormal{Str}}}
\def\str{\ensuremath{\textnormal{str}}}
\def\Ch{\ensuremath{\textnormal{Ch}}}
\def\Ahat{\ensuremath{\widehat{\textnormal{A}}}}
\def\act{\ensuremath{\curvearrowright}}
\newcommand{\br}[1]{\bm{\mathrm{#1}}}
\title{Deformation Spaces, Rescaled Bundles and the Kirillov Character Formula}
\author{Maxim Braverman and Ahmad Reza Haj Saeedi Sadegh }
\date{November 2022}
\begin{document}
\begin{abstract}
    In this paper, we construct a smooth vector bundle over the deformation to the normal cone $\DNC(V,M)$ through a rescaling of a vector bundle $E\to V$, which generalizes the construction of the spinor rescaled bundle over the tangent groupoid by Nigel Higson and Zelin Yi. We also provide an equivariant version of their construction. As the main application,  we recover the Kirillov character formula for the equivariant index of Dirac-type operators. As another application, we get an equivariant generalization of the description of  the Witten and the Novikov deformations of the de Rham-Dirac operator using the deformation to the normal cone obtained recently by O.~Mohsen. 
\end{abstract}
\maketitle

\section{Introduction}
In the coadjoint orbit method of Kirillov, every character of a compact Lie group is given as an integral over some orbits of the coadjoint action of $G$ on $\mathfrak{g}^*$. This is known as the {\em Kirillov character formula}. Berline and Vergne \cite{berline1985equivariant} (see also \cite[Ch.~8]{berline1992heat}) extended this formula to a {\em delocalized} equivariant index theorem for a general equivariant Dirac-type operator on a closed manifold $M$.  In this version of the index theorem, also known as {\em the (generalized) Kirillov formula}, the equivariant index is expressed as an integral of an equivariant differential form  over the whole manifold. 
Applying  the localization formula for  the integral of an equivariant form to the Kirillov formula, one recovers  Atiyah-Segal fixed point formula for the index, \cite{atiyah1968index}.

 In \cite[Ch.~8]{berline1992heat}, a local version of the Kirillov form formula is proven by adopting Bismut's generalization of the Getzler rescaling  technique (see \cite{bismut1985index}). In this paper, we give  a  coordinate-free version of the proof in \cite{berline1992heat} by using the tangent groupoid $\mathbb{T}M$.  This gives an equivariant generalization of the method of  Higson and Yi, \cite{Higson2019SpinorsAT}. 
 
We also study the \emph{deformation to the normal cone} $\DNC(V,M)$ for an embedding $M\hookrightarrow V$ (see Subsection \ref{ss:deformationtothenormalcone}, or \cite{debord2019lie,rouse2008schwartz,sadegh2018euler}), and generalize the rescaled bundle construction of \cite{Higson2019SpinorsAT} to this deformation space. In some sense, this generalization makes the exposition clearer and more conceptual since it makes it evident which structures are needed for the construction to work. We also describe an equivariant version of this construction. As one of the applications, we provide an equivariant generalization of  the description of the Witten deformation via the deformation to the normal cone, obtained in a recent paper by Mohsen, \cite{Mohsen22}, cf. Subsection~\ref{ex:triviallyfilteredcase}. We also generalize this construction to the Novikov deformation, \cite{Pazhitnov87,BrFar1,BrFar3}.

We  now give a slightly more detailed review of this construction. 

To an embedding of smooth manifolds $M\hookrightarrow V$, one may associate the {deformation to the normal cone}, denoted $\DNC(V,M)$, as a set is given by the disjoint union  

\[\mathcal{N}\times\{0\}\bigsqcup_{t\neq 0}V\times\{t\}\]
where $\mathcal{N}$ is the normal bundle of the embedding. There is a smooth structure on  $\DNC(V,M)$ which can be given by an explicit atlas (\cite{rouse2008schwartz}) or can be given using an algebraic point of view (see Subsection \ref{ss:deformationtothenormalcone} or \cite{sadegh2018euler}).

One important example of the deformation to the normal cone is the \emph{tangent groupoid}. Associated with the diagonal embedding $M\hookrightarrow M\times M$, the deformation space $\DNC(M\times M,M)$, also denoted by $\mathbb{T}M$, is called the tangent groupoid of $M$. As a set, it is given by the disjoint union 

\[TM\times\{0\}\bigsqcup_{t\neq 0}M\times M\times\{t\}.\]
In \cite{Connes94noncommutativegeometry}, Connes used the tangent groupoid whose K-theory carries the analytical index of elliptic operators. In \cite{van2015groupoid}, van Erp and Yuncken used the tangent groupoid to completely characterize the algebra of pseudodifferential operators. 

In \cite{Higson2019SpinorsAT}, Higson and Yi introduced a coordinate-free approach to Getzler's rescaling method. This was achieved by introducing the \emph{rescaled spinor bundle} over the tangent groupoid of a spin manifold. So for a spin manifold $M$, with the spinor bundle ${S}\to M$, Higson and Yi,  \cite{Higson2019SpinorsAT}, constructed a vector bundle $\mathbb{S}\to \mathbb{T}M$ over the tangent groupoid. This bundle is given by the following diagram
\[\begin{tikzcd}
\pi^*\Lambda^*T^*M  \arrow[d, ]
&& S\boxtimes S^* \arrow[d, ] \\
TM\times\{0\} &\bigsqcup_{t\neq 0}
& M\times M\times\{t\}.
\end{tikzcd}\]

In this paper, we generalized the construction of the rescaled bundle in \cite{Higson2019SpinorsAT} to deformation to the normal cones and gave some examples of the rescaled bundles. As an application, we give a proof of the Kirillov formula by introducing a family of rescaled bundles over the tangent groupoid of a manifold $M$ that carries a $G$-equivariant Clifford module.

The paper is organized as follows: In Section \ref{s:equivariancohomolgy}, we give a quick overview of Cartan's model of the equivariant cohomology and equivariant characteristic classes.

In Section \ref{s:equivariantindexandkirillovformula}, we state the (generalized) Kirillov formula and give an overview of the heat kernel proof of this formula. In particular, we introduce the  Bismut Laplacian and explain its role in the proof.

In Section \ref{s:rescaledbundles}, we introduce the deformation to the normal cone $\DNC(V,M)$ associated with an embedding of manifolds $M\hookrightarrow V$. If there is a vector bundle $E\to V$ with connection compatible with certain filtrations on $E|_M$ and on the bundle of endomorphism $\textup{End}(E)$, then we introduce a vector bundle $\mathbb{E}\to \DNC(V,M)$. In the construction of the deformation to the normal cone and the bundle over it, we follow an algebraic method introduced in \cite{sadegh2018euler} and \cite{Higson2019SpinorsAT}.

In Section \ref{s:equivariantrescaledbundles}, we give an example of the rescaled bundle over the tangent groupoid $\mathbb{T}M$, when $M$ carries a $G$-equivariant Clifford module structure. We give a family of rescaled bundles $\mathbb{E}_J\to \mathbb{T}M$ indexed by $J\in \mathbb{N}$. 

In Section \ref{s:hearkernelasymptoticsproffkirillovformula}, we study the asymptotics of the heat kernel associated with the Bismut Laplacian and its relation to the rescaled bundles, and then we give a proof of Kirillov's character formula for the equivariant index of the Dirac operator.

In Section \ref{examplesofrescaledbundles}, we gave two extra applications of the rescaled bundles in the study of the Witten and Novikov deformations and the equivariant index formula.



\section{The Equivariant Cohomology}\label{s:equivariancohomolgy}

In this section, we recall the Cartan module for equivariant cohomology and the construction of the equivariant characteristic classes. In our exposition, we roughly follow \cite[\S7.1]{berline1992heat}.

\subsection{The polynomial algebra}\label{ss:polynomialalgebra}
Consider the space of polynomials $\bC[\mathfrak{g}]$ that comes with the grading
\[\bC[\mathfrak{g}]=\bigoplus_k\bC_k[\mathfrak{g}]\]
with $\bC^k[\mathfrak{g}]$ consists of homogeneous polynomials of degree $k$.
We also consider the corresponding filtration
\[\bC=\bC^0[\mathfrak{g}]\subset \bC^1[\mathfrak{g}]\subset \cdots \subset \bC[\mathfrak{g}].\]
Denote by $\bC[\mathfrak{g}]_{(J)}$ the quotient of $\bC[\mathfrak{g}]$ by the ideal of polynomials of order $J+1$. Every element $a\in \bC[\mathfrak{g}]_{(J)}$ has a unique polynomial representative of the form $\sum_{\alpha}a_{\alpha}X^{\alpha}$ and hence the algebra $\bC[\mathfrak{g}]_{(J)}$ inherits a grading (by the degree of the polynomials).

The polynomial algebra appears in the Kirillov formula context as 
\[\bC[\mathfrak{g}]\otimes \Lambda V\]
where $V$ is a finite-dimensional vector space. This tensor space has a grading given by 
\begin{equation}\label{equivariantgradingofexterioralgebra}
    \deg(P\otimes a)=2\deg(P)+\deg_{\Lambda}(a)
\end{equation}
where $P\in \bC[\mathfrak{g}]$ and $a\in\Lambda V$. Similar grading will descend to tensor spaces
\[\bC[\mathfrak{g}]_{(J)}\otimes \Lambda V.\]

\subsection{G-equivariant differential forms.}

Assume $G$ is a Lie group acting on the smooth manifold $M$. 
For $X\in \mathfrak{g}$, we denote by $X^M$ or just $X$ the vector field on $M$ corresponding to the infinitesimal action on $C^{\infty}(M)$
\[Xf(m):=\frac{d}{dt}|_{t=0}f(\exp(-tX)m).\]

Let $\bC[\mathfrak{g}]$ be the algebra of polynomial functions on $\mathfrak{g}$ and consider the algebra
\[\bC[g]\otimes\mathcal{A}(M)\]
which carries a left action of $G$: For $\alpha\in \bC[g]\otimes\mathcal{A}(M)$ and $g\in G$ we have
\[(g.\alpha)(X)=g.\Big(\alpha(\textup{Ad}_{g^{-1}}X)\Big).\]
This algebra has a grading for $\alpha\in \bC[g]\otimes\mathcal{A}(M)$ and $g\in G$ given by the equivariant degree
\[o(\alpha)=o^{\textup{ext}}(\alpha)+2o^{\textup{poly}}(\alpha)\]
where $o^{\textup{ext}}$ is the exterior degree of differential forms and $o^{\textup{poly}}$ is the polynomial degree. 

We define the equivariant differential $d_{\mathfrak{g}}$ on $\bC[g]\otimes\mathcal{A}(M)$ 
given by the formula
\[(d_{\mathfrak{g}}\alpha)(X)=d(\alpha(X))-\iota(X^M)\alpha(X).\]
Note that the contraction $\iota(X^M)$ reduces the differential degree by $1$, increases the polynomial degree by $1$ and hence it increases the equivariant degree by  $1.$ Hence $d_{\mathfrak{g}}$ increases the equivariant degree by $1.$ 
Note that $d^2_{\mathfrak{g}}\neq0$, indeed
\[(d^2_{\mathfrak{g}}\alpha)(X)=-\mathscr{L}(X)\alpha(X).\]
However, the square of the equivariant differential vanishes on the {\em Cartan subalgebra}
\[\mathcal{A}_G(M)=\Big(\bC[\mathfrak{g}]\otimes\mathcal{A}(M)\Big)^G\]
 of  invariant elements or equivariant differential forms. Indeed
\begin{itemize}
    \item For every $g\in G$ $$(g.\alpha)(X)=\alpha(\textup{Ad}_{g}X)$$ 
    or equivalently
    \item for every $X\in\mathfrak{g}$ $$\mathscr{L}(X)\alpha(X)=0.$$
\end{itemize}

Since $(\mathcal{A}_G(M),d_{\mathfrak{g}})$ is a complex, we can define its cohomology, the {\em equivariant cohomology} of $M$, denoted by
\[H_G(M).\]
This is known as the {\em Cartan model of equivariant cohomology} \cite{cartan1951notion}. When $M$ is compact and oriented, we define the integration map
\[\int_M:\bC[\mathfrak{g}]\otimes\mathcal{A}(M)\to\bC[\mathfrak{g]}\]
by integrating only terms with the top differential degree part. This map restricts to
\[\int_M:\mathcal{A}_G(M)\to\bC[\mathfrak{g]}^G\]
where the image is the algebra of invariant polynomials. This map indeed descends to the cohomology level:

\[\int_M:H_G(M)\to\bC[\mathfrak{g]}^G.\]

\subsection{The Equivariant Structure}
Let $M$ be compact oriented Riemannian manifold of $M$. Assume a Lie group $G$ acts by positively oriented isometries on $M$. Assume $\mathscr{E}\to M$ is a $G$-equivariant 
vector bundle with a connection
$\nabla^{\mathscr{E}}$ that commutes with the $G$-action. We define the moment $\mu^{\mathscr{E}}\in\mathcal{A}_G^2(M,\textup{End}(\mathscr{E}))$ of the connection $\nabla^{\mathscr{E}}$
 by the formula 
 \[\mu^{\mathscr{E}}(X):=\mathscr{L}^{\mathscr{E}}(X)-\nabla_X^{\mathscr{E}}\]
where $\mathscr{L}^{\mathscr{E}}(X)$ is the Lie derivative of $X\in\mathfrak{g}$. In particular, when $\mathscr{E}=TM$ with the Levi-Civita connection $\nabla$, we obtain the Riemannian moment 
\[\mu^M\in \Big(\Gamma(M,\mathfrak{so}(M))\otimes\mathfrak{g}^*\Big)^G\]
so that for $X\in\mathfrak{g}$ and $\xi\in \Gamma(TM)$ 
\begin{align*}
    \mu^M(X)\xi&=[X,\xi]-\nabla_X\xi\\
    &=-\nabla_{\xi}X.
\end{align*}
Another example of a moment map is the Kosmann formula  for the spinor bundle $\mathscr{E}=S$, when $M$ is spin:

\begin{proposition}\label{p:momentofspinorbundle}\cite{Kosmann1971DrivesDL}
If $M$ is spin, and $\mathscr{E}=S$ is the spinor bundle, then 
\[\mu^S(X)=-\frac{1}{4}c(d\theta_X).\]
\end{proposition}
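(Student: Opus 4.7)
The strategy is to reduce Kosmann's identity to a universal statement about the spin representation applied to the Riemannian moment $\mu^M$, and then carry out a short calculation in a local orthonormal frame. The key naturality fact is: for any $G$-equivariant bundle $\mathscr{E}$ associated to the orthonormal frame bundle of $M$ via a representation $\rho\colon \mathfrak{so}(TM)\to \End(\mathscr{E})$, with connection induced from $\nabla$, one has
\[
\mu^{\mathscr{E}}(X)=\rho\bigl(\mu^M(X)\bigr).
\]
This is because $G$ acts by isometries, so the Levi-Civita connection is $G$-invariant, and both $\mathscr{L}^{\mathscr{E}}(X)$ and $\nabla^{\mathscr{E}}_X$ can be read off from the corresponding operators on the orthonormal frame bundle through $\rho$; the ``horizontal'' pieces cancel and only the ``vertical'' part, which is exactly $\mu^M(X)$, survives. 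Applying this to the spin representation $\rho\colon \mathfrak{so}(TM)\to \End(S)$ gives $\mu^S(X)=\rho(\mu^M(X))$.

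From the formula recalled just before the proposition, $\mu^M(X)=-\nabla X$ as an endomorphism of $TM$, which is skew-symmetric precisely because $X$ is a Killing field (this is the Killing equation, equivalent to $G$ acting by isometries). Next I would use the standard identification of $\mathfrak{so}(TM)$ with $\Lambda^2 T^*M$: to a skew endomorphism $A$ one assigns the $2$-form $\omega_A(Y,Z)=g(AY,Z)$, and the infinitesimal spin representation is then
\[
\rho(A)=\tfrac{1}{4}\sum_{i,j}g(Ae_i,e_j)\,c(e_i)c(e_j)=\tfrac{1}{2}\,c(\omega_A)
\]
in any local orthonormal frame $\{e_i\}$.

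It remains to identify $\omega_{-\nabla X}$ with $d\theta_X$ up to a scalar. Using that $\nabla$ is torsion-free and $\theta_X=g(X,\cdot)$,
\[
d\theta_X(Y,Z)=g(\nabla_Y X,Z)-g(\nabla_Z X,Y),
\]
and the Killing condition makes this skew in $(Y,Z)$ twice over, so $d\theta_X(Y,Z)=2\,g(\nabla_Y X,Z)$. Hence $\omega_{-\nabla X}=-\tfrac{1}{2}\,d\theta_X$, and combining everything,
\[
\mu^S(X)=\rho(\mu^M(X))=\rho(-\nabla X)=\tfrac{1}{2}\,c(\omega_{-\nabla X})=-\tfrac{1}{4}\,c(d\theta_X).
\]

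The main obstacle is the first step, the naturality identity $\mu^{\mathscr{E}}(X)=\rho(\mu^M(X))$, since it requires unpacking the definitions of the Lie derivative on associated bundles and of the induced connection in parallel; once this is granted, the remaining content is a careful bookkeeping of signs and of the two standard factors of $\tfrac{1}{2}$ (one from the spin representation of a $2$-form, one from the Killing identity for $d\theta_X$).
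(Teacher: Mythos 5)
Your proposal is correct, and the bookkeeping of factors checks out: $\mu^M(X)=-\nabla X$ is skew by the Killing equation, the spin representation sends a skew endomorphism $A$ to $\tfrac12 c(\omega_A)$, and $\omega_{-\nabla X}=-\tfrac12 d\theta_X$, so $\mu^S(X)=-\tfrac14 c(d\theta_X)$. There is, however, no in-paper proof to compare against: the paper states this as Proposition~\ref{p:momentofspinorbundle} with a citation to Kosmann and gives no argument. Your route is the standard one, and the only place you wave a hand is the naturality identity $\mu^{\mathscr{E}}(X)=\rho(\mu^M(X))$; that is indeed true, and if you want to make it airtight, the cleanest way is to work on the (spin) frame bundle $P$: for $X\in\mathfrak{g}$ let $X^P$ be the lift of $X^M$ to $P$ (the isometric $G$-action lifts canonically to the oriented orthonormal frame bundle and, for connected $G$, to the spin frame bundle), and note that $X^P$ minus the horizontal lift of $X^M$ is a vertical, $\mathfrak{spin}$-valued equivariant function on $P$, which under the bundle isomorphism is exactly $\mu^M(X)$; since $\mathscr{L}^{\mathscr{E}}(X)$ and $\nabla^{\mathscr{E}}_X$ on an associated bundle $P\times_\rho W$ are obtained from $X^P$ and the horizontal lift respectively, their difference is $\rho$ applied to that vertical part. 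With that step filled in, your proof is complete and matches Kosmann's original statement.
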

Assume $\mathscr{E}\to M$ is a G-equivariant Clifford module whose Clifford action and Hermitian metric are compatible with the $G$-action. Locally, such vector bundle is given as $S\otimes E$ where $S$ is the local spinor bundle and $E$ is an equivariant auxiliary vector bundle with $G$-compatible connection $\nabla^E$ such that $\nabla^{\mathscr{E}}=\nabla^S\otimes 1+1\otimes\nabla^{E}.$  Then we define the twisting moment map (locally) as the moment of the auxiliary bundle $E$ or, equivalently, the difference \[\mu^{\mathscr{E}/S}:=\mu^{\mathscr{E}}-\mu^{S}.\]
So we have $\mu^{\mathscr{E}/S}=\mu^{\mathscr{E}}+\frac{1}{4}c(d\theta_X)$, and moreover, we have $\mu^{\mathscr{E}/S}\in\mathcal{A}_G^2(M,\textup{End}_{\textup{Cl}}(\mathscr{E}))$ (see \cite{berline1992heat}). Here $\textup{End}_{\textup{Cl}}$ denotes the space of endomorphisms that commute with the Clifford action.

The equivariant curvature of $M$ is defined by 
\[R_{\mathfrak{g}}:=R+\mu^M\in \mathcal{A}_G^2(M,\mathfrak{so}(M)).\]
where $R$ is the Riemannian curvature tensor. The equivariant $\Ahat$-genus is the closed equivariant differential form
\[\Ahat_{\mathfrak{g}}(X,M):=\textup{det}^{1/2}\Big(\frac{R_{\mathfrak{g}}/2}{\sinh(R_{\mathfrak{g}}/2)}\Big)\in\mathcal{A}_G(M).\]
Using this, we obtain the equivariant twisting curvature of $\mathscr{E}$ given by
\[F^{\mathscr{E}/S}_{\mathfrak{g}}(X)=F^{\mathscr{E}/S}+\mu^{\mathscr{E}/S}\in\mathcal{A}_G^2(M,\textup{End}_{\textup{Cl}}(\mathscr{E})).\]
The equivariant relative Chern character of $\mathscr{E}$ is then defined as 
\[\Ch_{\mathfrak{g}}(X,\mathscr{E}/S)=\Str_{\mathscr{E}/S}\Big(\exp(-F^{\mathscr{E}/S}_{\mathfrak{g}}(X))\Big)\in \mathcal{A}_G^2(M).\]

\section{The Equivarinat Index and the Kirillov Formula}\label{s:equivariantindexandkirillovformula}

In this section, we formulate our main result – the Kirillov formula for the index of the equivariant Dirac-type operator. Then we give an overview of the heat kernel proof of this formula, \cite[Ch.~8]{berline1992heat}. In particular, we define the Bismut Laplacian \cite{bismut1985index} and explain its role in the proof. 

\subsection{The settings}\label{ss:settings}
Let $G$ be a compact Lie Group acting by positively oriented isometries on a compact oriented even-dimensional Riemannian manifold $M^n$. Consider a $\bZ_2$-graded Clifford module  $\mathscr{E}\to M$, which carries an action of $G$ that commutes with the grading, the Hermitian product, and the Clifford action. We fix a Clifford a $G$-equivariant connection $\nabla^{\mathscr{E}}$ on $\mathscr{E}$ and consider the Dirac-type operator 
\[D:\Gamma(M,\mathscr{E})\xrightarrow[]{\nabla^{\mathscr{E}}}\Gamma(M,\mathscr{E}\otimes\Lambda^1T^*M)\xrightarrow[]{c}\Gamma(M,\mathscr{E}).\]
Then $D$ commutes with the action of $G$. Any $g\in G$, takes $\ker(D^{\pm})$ to themselves. The {\em equivariant index} $\textup{ind}(g,D)$ of $D$ is defined as the supertrace
\[\textup{ind}(g,D):= \textup{tr}(g,\ker(D^+))-\textup{tr}(g,\ker(D^-)).\]

\begin{theorem}[Kirillov Formula]\label{th:kirillovformula} For $g=e^{-X}$, with $X\in\mathfrak{g}$ sufficiently small, 
\[\textup{ind}(e^{-X},D)=(2\pi i)^{-n/2}\int_M\Ahat_{\mathfrak{g}}(X,M)\Ch_{\mathfrak{g}}(X,\mathscr{E}/S)\]
\end{theorem}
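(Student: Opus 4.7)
\medskip

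\textbf{Proof proposal.} My plan is to combine the McKean--Singer formula with the rescaled-bundle construction over the tangent groupoid $\bT M$, following the strategy announced in the introduction and generalizing \cite{Higson2019SpinorsAT} to the equivariant setting.

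\emph{Step 1: McKean--Singer and the Bismut replacement.} First I would invoke the standard McKean--Singer identity
\[\textup{ind}(e^{-X},D)\;=\;\Str\bigl(e^{-X}\cdot e^{-tD^{2}}\bigr)\]
for every $t>0$. Following Bismut, I would then rewrite the right-hand side as
\[\Str\bigl(e^{-X}\cdot e^{-tD^{2}}\bigr)\;=\;\Str\bigl(e^{-tH(tX)}\bigr),\]
where $H(X)=\bigl(D+\tfrac14 c(X^{M})\bigr)^{2}-\mathscr{L}(X)$ is the Bismut Laplacian. The key input here is the Lichnerowicz-type formula together with the intertwining relation $\mathscr{L}^{\mathscr{E}}(X)=\nabla^{\mathscr{E}}_{X^{M}}+\mu^{\mathscr{E}}(X)$.

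\emph{Step 2: Lift to the rescaled bundle.} Applying the DNC construction of Section \ref{s:rescaledbundles} to the diagonal $M\hookrightarrow M\times M$ and to the bundle $\mathscr{E}\boxtimes\mathscr{E}^{*}$ equipped with the Clifford filtration on $\End(\mathscr{E})$, I obtain the family of equivariant rescaled bundles $\mathbb{E}_{J}\to\bT M$ promised in Section \ref{s:equivariantindexandkirillovformula}. The central analytic claim is that the rescaled Bismut heat kernel
\[k_{t}(x,y)\;:=\;t^{n/2}\,K_{tH(tX)}(x,y)\]
extends, for $J$ large enough, to a smooth section of $\mathbb{E}_{J}$ over $\bT M$, including at $t=0$.

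\emph{Step 3: Compute the $t=0$ restriction via Mehler.} Once smoothness is granted, $k_{0}$ is a section of $\pi^{*}\bigl(\Lambda^{*}T^{*}M\otimes\textup{End}_{\textup{Cl}}(\mathscr{E})\bigr)\to TM$, and on each tangent space $T_{m}M$ it solves a generalized harmonic-oscillator heat equation whose generator, obtained by expanding $H(tX)$ to leading rescaled order in normal coordinates, is
\[-\Delta \;+\;\tfrac14\bigl|R_{\mathfrak{g}}(X)\,y\bigr|^{2}\;+\;F^{\mathscr{E}/S}_{\mathfrak{g}}(X).\]
The classical Mehler formula evaluated on the diagonal then identifies $k_{0}(m,m)$ pointwise with
\[(2\pi i)^{-n/2}\,\Ahat_{\mathfrak{g}}(X,M)\wedge\Ch_{\mathfrak{g}}(X,\mathscr{E}/S).\]

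\emph{Step 4: Assemble.} Since the left-hand side of Step 1 is independent of $t$, I take $t\to 0$ in the supertrace $\int_{M}\str k_{t}(m,m)\,dm$. The Berezin-type identification of the Clifford supertrace with the top-degree exterior component converts the pointwise limit into an integral against the Riemannian density, yielding precisely the Kirillov integral in Theorem \ref{th:kirillovformula}.

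The main obstacle is Step 2: establishing that the equivariant Bismut heat kernel defines a smooth section of $\mathbb{E}_{J}$. This reduces to checking that, for each $t$, the operator $H(tX)$ has the correct order with respect to the DNC filtration encoded in $\mathbb{E}_{J}$, which in turn rests on the Clifford-filtration compatibility of $\nabla^{\mathscr{E}}$ and of both the spinor moment $\mu^{S}$ (by Proposition \ref{p:momentofspinorbundle}) and the twisting moment $\mu^{\mathscr{E}/S}$. Once this smoothness is in hand, Steps 3 and 4 run in parallel with the non-equivariant analysis of \cite{Higson2019SpinorsAT}, with $R_{\mathfrak{g}}$ and $F^{\mathscr{E}/S}_{\mathfrak{g}}$ replacing $R$ and $F^{\mathscr{E}/S}$ throughout.
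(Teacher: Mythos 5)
Your plan reproduces the high-level architecture of the paper's proof --- McKean--Singer with the Bismut Laplacian, a lift of the heat kernel to a rescaled bundle over $\bT M$, Mehler's formula on the $t=0$ fiber, and the Berezin supertrace --- but the two devices that actually make the tangent-groupoid version close are missing, and the surrogate you use is in tension with the bundle you invoke.

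First, the paper never rescales $X\mapsto tX$. Instead it tensors with the truncated polynomial algebra and works with $E=\mathscr{E}\boxtimes\mathscr{E}^{*}\otimes\bC[\mathfrak{g}]_{(J)}$, putting on $F=E|_{M}$ the \emph{mixed} Clifford/polynomial filtration for which polynomial degree $r$ contributes Getzler order $2r$, as in \eqref{equivariantgradingofexterioralgebra}. This is not cosmetic: it is precisely what gives the $X$-dependent summands of the Bismut Laplacian (e.g.\ $\mu^{\mathscr{E}/S}(X)$, linear in $X$, hence Getzler order $2$ in $\mathbb{E}_{J}$ but order $0$ in the bare Clifford bundle) the right order so that $t^{2}\widehat{H}(X)$ extends smoothly to $\Gamma(\mathbb{E}_{J})$ and so that the $X$-dependence survives to $t=0$. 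Your ``$k_{t}=t^{n/2}K_{tH(tX)}$'' encodes the same Getzler--Bismut rescaling in the classical coordinate-based way, but it does not mesh with the $\mathbb{E}_{J}$ you simultaneously invoke, where $X$ is a fixed formal variable of degree $2$; you would need to choose one mechanism and carry it through, and the paper's route is the polynomial-filtration one. Second, the paper conjugates by $\rho_{X}=e^{\alpha_{X}}$ and works with $\widehat{H}(X)=\rho_{X}H(X)\rho_{X}^{-1}$, whose associated connection is $\widehat{\nabla}^{X}=\nabla^{\mathscr{E}}+\omega_{X}$. This ``two-step trivialization'' is needed because $\mathbb{E}_{J}$ is built from the $G$-equivariant connection $\nabla^{\mathscr{E}}$ while $H(X)$ is the Bochner Laplacian of $\nabla^{\mathscr{E},\frac{1}{4}X}$; without the conjugation the Getzler symbol at $t=0$ does not come out in the equivariant-curvature form that Mehler's formula needs (Propositions \ref{p:getzlersymbolofconjugateconnection} and \eqref{symbolofconjugateofbismutlaplacian} are exactly this bookkeeping). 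Two smaller slips: the Bismut Laplacian is $H(X)=D_{1/4}^{2}+\mathscr{L}^{\mathscr{E}}(X)$ with a plus sign; and the model operator on $T_{m}M$ is the sum of squares
\[-\sum_{i}\Bigl(\partial_{i}-\tfrac14\sum_{j}\bigl(R_{ij}+\mu^{M}_{ij}(X)\bigr)x^{j}\Bigr)^{2}+\mathsf{F}^{\mathscr{E}/S}+\mu^{\mathscr{E}/S}(X),\]
which you have truncated to $-\Delta+\tfrac14|R_{\mathfrak{g}}(X)y|^{2}+F^{\mathscr{E}/S}_{\mathfrak{g}}(X)$, dropping the first-order magnetic cross-terms that the Mehler determinant comes from. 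Finally, the ``main obstacle'' you flag in Step 2 is resolved in the paper not by a filtration check on the operator alone, but by the estimate $o^{sc}(\Phi_{j,J})\geq -2j$ on the heat-expansion coefficients (Proposition \ref{scalingorderofasymptoticcoefficients}) together with Lemma \ref{l:zelinlemma} controlling the remainder; those two ingredients are what actually promote $K_{\tau t^{2}}$ to a smooth section of $\mathbb{E}_{J}$.
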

In the rest of this section, we give a brief overview of the heat kernel proof of this theorem. 

\subsection{The modified McKean-Singer formula}\label{ss:McKeanSinger}
The equivariant index of $D$ can be related to the heat kernel of $D$ via the  McKean-Singer-type theorem:
\begin{equation}\label{eq:mckeansinger}
    \textup{ind}(g,D)=\textup{str}(ge^{-tD^2})
\end{equation}
for $t>0.$ 
However, it turns out that the asymptotic equation of the heat kernel 
in the right hand side of this formula is complicated, and the Getzler's rescaling method does not go through well for it. The calculation becomes easier if we  slightly modify the operator $D$. In fact, the following generalization of \eqref{eq:mckeansinger} is true: Fix $X\in \mathfrak{g}$ and for $u\in\bC$ define the deformed Dirac operator
\[D_u=D+uc(X).\]
Note that this is the Dirac operator associated with the connection 

\begin{equation}\label{eq:deformedconnection}
    \nabla^{\mathscr{E},-uX}:=\nabla^{\mathscr{E}}+u\theta_X.
\end{equation}
where $\theta_X\in \mathcal{A}^1(M)$ is the one-form dual to the vector field $X^M$:
\[\theta_X(\xi)=(X^M,\xi),\]
in other words
\[D_u:\Gamma(M,\mathscr{E})\xrightarrow[]{\nabla^{\mathscr{E},-uX}}\Gamma(M,\mathscr{E}\otimes\Lambda^1T^*M)\xrightarrow[]{c}\Gamma(M,\mathscr{E}).\]
\begin{proposition}\label{mckeansingergeneralization} For every $t>0$ and $u\in\bC$ we have
\[\textup{ind}(e^{-X},D)=\textup{str}(e^{-X}e^{-tD^2_u}).\]
\end{proposition}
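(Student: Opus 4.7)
The strategy I would use is standard but with the key twist that $e^{-X}$ must be compatible with the deformation. The overall plan: prove that $\str(e^{-X} e^{-tD_u^2})$ is independent of both $t$ and $u$, so that we may set $u=0$ and invoke the classical equivariant McKean-Singer formula \eqref{eq:mckeansinger}.

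The foundational observation I would establish first is that $e^{-X}$ commutes with $D_u$. Since $D$ is $G$-equivariant, $[e^{-X}, D] = 0$. For the new term $uc(X^M)$, note that $e^{-X}$ lies in the one-parameter subgroup generated by $X$, so $(e^{-X})_* X^M = X^M$; since the Clifford action is $G$-equivariant, $e^{-X}$ also commutes with $c(X^M)$. Hence $[e^{-X}, D_u] = 0$, and consequently $e^{-X}$ commutes with $D_u^2$ and with $e^{-tD_u^2}$. Also, because $D_u$ differs from $D$ by a bounded operator and has the same principal symbol, $D_u^2$ is a generalized Laplacian and $e^{-tD_u^2}$ is of supertrace class for $t>0$.

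For independence in $u$, I would apply Duhamel's formula
\[\tfrac{d}{du}e^{-tD_u^2} = -\int_0^t e^{-sD_u^2}\,\tfrac{dD_u^2}{du}\,e^{-(t-s)D_u^2}\,ds,\]
and use that $e^{-X}$ commutes with every $e^{-rD_u^2}$, together with ordinary trace cyclicity (all factors inside the supertrace are even), to collapse the $s$-integral and obtain
\[\tfrac{d}{du}\str\!\bigl(e^{-X}e^{-tD_u^2}\bigr) = -t\,\str\!\bigl(e^{-X}e^{-tD_u^2}\{D_u,c(X^M)\}\bigr),\]
where I use $\tfrac{dD_u^2}{du} = \{D_u, c(X^M)\}$. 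Setting $A := e^{-X}e^{-tD_u^2}c(X^M)$, which is odd, and again moving $D_u$ past the even factor $e^{-X}e^{-tD_u^2}$, the right-hand side equals $-t\,\str([D_u, A]_{\mathrm{gr}})$. But the supertrace of a graded commutator vanishes, so the derivative is zero. The argument for $t$-independence is analogous: $\tfrac{d}{dt}\str(e^{-X}e^{-tD_u^2}) = -\str(e^{-X}D_u^2 e^{-tD_u^2}) = -\str([D_u, D_u e^{-X}e^{-tD_u^2}]_{\mathrm{gr}})/2$, which vanishes for the same reason.

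Having established independence of $u$, I would set $u = 0$ to reduce to $\str(e^{-X}e^{-tD^2})$, and finish by the classical equivariant McKean-Singer identity: using the $G$-invariant spectral decomposition of $D^2$, the operator $D$ intertwines the $e^{-X}$-action on the positive and negative $\lambda$-eigenspaces of $D^2$ for $\lambda>0$, so those contributions cancel in the supertrace, leaving only $\tr(e^{-X}|_{\ker D^+}) - \tr(e^{-X}|_{\ker D^-}) = \mathrm{ind}(e^{-X}, D)$. The main subtlety in the whole argument is really the very first step—verifying that $D_u$ commutes with $e^{-X}$ despite the $X$-dependent perturbation—because this is precisely what allows the supertrace-of-graded-commutator trick to go through cleanly without extra error terms from the deformation.
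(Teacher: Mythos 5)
The paper does not supply a proof of Proposition~\ref{mckeansingergeneralization}; it is stated as a known fact (essentially \cite[Ch.~8]{berline1992heat}), with the paper merely remarking afterward that $e^{-X}e^{-tD_u^2}=e^{-tD_u^2-\mathscr{L}(X)}$. So there is no in-paper argument to compare against, and your proof must be judged on its own terms.

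Your argument is correct and is the standard one. You correctly identify the crucial compatibility: because $\mathrm{Ad}_{e^{-X}}X = X$, the group element $e^{-X}$ commutes not only with $D$ but also with $c(X^M)$, hence with $D_u$ and with $e^{-tD_u^2}$ for all $u,t$. This is exactly what licenses moving $e^{-X}$ and $e^{-tD_u^2}$ freely inside the supertrace so that the Duhamel derivative $\tfrac{d}{du}\,\str(e^{-X}e^{-tD_u^2}) = -t\,\str\big(e^{-X}e^{-tD_u^2}\{D_u,c(X^M)\}\big)$ collapses to the supertrace of a graded commutator of odd elements, which vanishes. The $u=0$ reduction then invokes \eqref{eq:mckeansinger}. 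Your observation that $D_u^2$ is a generalized Laplacian with the same principal symbol as $D^2$ correctly handles the smoothing/trace-class and non-self-adjointness (complex $u$) issues. The only blemish is a phrasing slip in the last paragraph: what cancels are the positive- and negative-\emph{chirality} parts of each eigenspace of $D^2$ with eigenvalue $\lambda>0$ (intertwined by $D$), not ``positive and negative $\lambda$-eigenspaces''---$D^2$ is positive. This is cosmetic and does not affect the validity of the argument.
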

 Note that since $D$ commutes with the infinitesimal action of $\mathfrak{g}$,  we can write
\[e^{-X}e^{-tD^2_u}=e^{-\mathscr{L}(X)}e^{-tD^2_u}=e^{-tD_u^2-\mathscr{L}(X)}.\]
We denote the generalized Laplacian operator that appears in the exponent of the right-hand side, for $t=1$,  by

\begin{equation}\label{eq:bismutlaplacian}
    H_u(X)=D_u^2+\mathscr{L}^{\mathscr{E}}(X).
\end{equation}
Therefore the equivariant index corresponding to $e^{-X}\in G$ is given by 
\[\textup{ind}(e^{-X},D)=\textup{str}(e^{-H_u(X)}).\]

\subsection{The Lichnerowicz formula}\label{ss:Lichnnerowicz}
We now compute the operator $H_u$.

\begin{lemma}\cite[Prop.~3.45]{berline1992heat}\label{commutatorformuladiracclifford}
If $\theta\in\mathcal{A}^1(M)$ is dual to the vector field $X\in\mathfrak{X}(M)$ then
\[[D,c(d\theta)]=-2\nabla_X^{\mathscr{E}}+c(d\theta)+d^*\theta.\]

\end{lemma}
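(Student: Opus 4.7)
I would work locally in an orthonormal frame $\{e_i\}$ for $TM$, in which the Dirac operator takes the form $D = \sum_i c(e_i)\nabla^{\mathscr{E}}_{e_i}$ and the Clifford-compatibility of $\nabla^{\mathscr{E}}$ gives $\nabla^{\mathscr{E}}_{e_i} \circ c(\theta) = c(\nabla_{e_i}\theta) + c(\theta)\circ\nabla^{\mathscr{E}}_{e_i}$. The natural strategy is to split the commutator into a first-order piece (which should produce $-2\nabla_X^{\mathscr{E}}$) and a zeroth-order piece (which should produce $c(d\theta) + d^*\theta$).

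First, applying the Leibniz rule and rearranging gives
\[[D,c(\theta)] \;=\; \sum_i c(e_i)\,c(\nabla_{e_i}\theta) \;+\; \sum_i \bigl[c(e_i),\,c(\theta)\bigr]\,\nabla^{\mathscr{E}}_{e_i}.\]
For the first-order sum, the Clifford anticommutator $c(e_i)c(\theta)+c(\theta)c(e_i) = -2\theta(e_i)$ rewrites the bracket as $-2\theta(e_i) - 2c(\theta)c(e_i)$. Summed over $i$, the first contribution yields $-2\nabla_X^{\mathscr{E}}$ by the definition $\theta = X^{\flat}$, while the contribution $-2c(\theta)c(e_i)\nabla^{\mathscr{E}}_{e_i} = -2c(\theta)D$ cancels against the natural rearrangement of the algebraic sum once one collects the Clifford products back into a $c(\theta)D$ piece.

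Next, I would decompose the tensor $(\nabla_{e_i}\theta)(e_j)$ into its symmetric and antisymmetric parts in $(i,j)$. Torsion-freeness of the Levi-Civita connection identifies the antisymmetric part with $\tfrac{1}{2}d\theta(e_i,e_j)$ via $d\theta(v,w)=(\nabla_v\theta)(w)-(\nabla_w\theta)(v)$, and the trace $\sum_i(\nabla_{e_i}\theta)(e_i)$ with $-d^*\theta$. Summed against $c(e_i)c(e_j)$, the quadratic Clifford identity $c(e_i)c(e_j) = -c(e_j)c(e_i) - 2\delta_{ij}$ converts the antisymmetric part into the $2$-form Clifford action $c(d\theta)$, turns the trace into the scalar $d^*\theta$, and kills the symmetric traceless part (a symmetric-against-antisymmetric contraction).

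The main obstacle is essentially bookkeeping rather than anything conceptual: the Clifford sign convention $c(a)c(b)+c(b)c(a)=-2\langle a,b\rangle$, the metric identification $\theta\leftrightarrow X$, the sign of $d^* = -\sum_i \iota_{e_i}\nabla_{e_i}$, and the factor of $\tfrac{1}{2}$ inherent in antisymmetrization must all be tracked coherently so that the three right-hand terms assemble with the stated coefficients and so that the spurious $-2c(\theta)D$ term cancels cleanly. Once the conventions are fixed at the outset, the remainder of the argument is a direct algebraic calculation using only the Leibniz rule, the torsion-free identity for $d\theta$, and the quadratic Clifford relations, with no further geometric input required.
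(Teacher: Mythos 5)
Your overall strategy (orthonormal frame, Leibniz rule, symmetric/antisymmetric split of $(\nabla_{e_i}\theta)(e_j)$) is the standard one, and the zeroth-order half of your calculation — identifying $\sum_i c(e_i)c(\nabla_{e_i}\theta)$ with $c(d\theta)+d^*\theta$ via the torsion-free identity and the quadratic Clifford relation — is correct. You also correctly read the left-hand side as $[D,c(\theta)]$, not $[D,c(d\theta)]$ (a typo in the paper). However, there is a genuine gap in the first-order part, and it traces to the meaning of the bracket. As in \cite{berline1992heat}, and as the paper's own use of the lemma in the proof of Proposition~\ref{p:lichnerowicztypeformula} makes explicit (there one needs $D_u^2-D_{-u}^2=2u[D,c(\theta_X)]$, and $D_u^2-D_{-u}^2 = 2u\,(Dc(\theta_X)+c(\theta_X)D)$), the bracket is the \emph{supercommutator}: since $D$ and $c(\theta)$ are both odd, $[D,c(\theta)]=Dc(\theta)+c(\theta)D$. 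You instead expand the ordinary commutator, writing $[c(e_i),c(\theta)]=-2\theta(e_i)-2c(\theta)c(e_i)$ and obtaining a leftover $-2c(\theta)D$, which you assert ``cancels against the natural rearrangement of the algebraic sum.'' There is nothing to cancel it against: the remaining piece $\sum_i c(e_i)c(\nabla_{e_i}\theta)$ is purely zeroth-order. A flat sanity check makes the discrepancy concrete: on $\mathbb{R}^n$ with $\theta=dx^1$ one has $d\theta=0$, $d^*\theta=0$, so the right-hand side is $-2\partial_1$, whereas $Dc(\theta)-c(\theta)D=2\sum_{i\neq 1}c(e_i)c(e_1)\partial_i\neq -2\partial_1$, while $Dc(\theta)+c(\theta)D=-2\partial_1$ exactly.

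Once the bracket is read as the anticommutator, the difficulty you flag disappears and your argument goes through cleanly: the Leibniz expansion gives
\[
Dc(\theta)+c(\theta)D \;=\; \sum_i \{c(e_i),c(\theta)\}\,\nabla^{\mathscr{E}}_{e_i} \;+\; \sum_i c(e_i)c(\nabla_{e_i}\theta)
\;=\; -2\nabla^{\mathscr{E}}_X + \sum_i c(e_i)c(\nabla_{e_i}\theta),
\]
with no spurious $c(\theta)D$ term at all, and your treatment of the zeroth-order sum finishes the proof. (For reference, the paper itself offers no proof of this lemma and simply cites \cite[Prop.~3.45]{berline1992heat}.)
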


\begin{proposition}\label{p:lichnerowicztypeformula} We have a Lichnerowicz-type formula for $H_u(X)$:
\[H_u(X)=\Big(\nabla^{\mathscr{E},uX}\Big)^*\nabla^{\mathscr{E},uX}+\frac{1}{4}{\kappa}_M+c(F^{\mathscr{E}/S})+\mu^{\mathscr{E}}(X)+u c(d\theta_X)+(1-4u)\nabla_X^{\mathscr{E}}.\]
With respect to a local orthonormal frame $\{e_i\}$ 
\[\Big(\nabla^{\mathscr{E},uX}\Big)^*\nabla^{\mathscr{E},uX}=-\sum_i\Big(\nabla^{\mathscr{E},uX}_{e_i}\Big)^2-\nabla^{\mathscr{E},uX}_{\nabla_{e_i}e_i}.\]
Note that when $u=\frac{1}{4}$, using the equality $\mu^{\mathscr{E}/S}=\mu^{\mathscr{E}}+\frac{1}{4}c(d\theta_X)$, we obtain
\[H_{1/4}(X)=\Big(\nabla^{\mathscr{E},\frac{1}4X}\Big)^*\nabla^{\mathscr{E},\frac{1}4X}+\frac{1}{4}{\kappa}_M+c(F^{\mathscr{E}/S})+\mu^{\mathscr{E}/S}(X).\]

\end{proposition}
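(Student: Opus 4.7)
The plan is a direct computation. First I would expand
\[
D_u^2 \;=\; D^2 + u\,\{D,c(\theta_X)\} + u^2\,c(\theta_X)^2,
\]
insert the ordinary Lichnerowicz identity $D^2 = (\nabla^{\mathscr{E}})^*\nabla^{\mathscr{E}} + \tfrac14\kappa_M + c(F^{\mathscr{E}/S})$, apply Lemma~\ref{commutatorformuladiracclifford} (which I read as the graded anticommutator of the two odd operators $D$ and $c(\theta_X)$) to replace $\{D,c(\theta_X)\}$ by $-2\nabla^{\mathscr{E}}_X + c(d\theta_X) + d^*\theta_X$, and use the Clifford identity $c(\theta_X)^2 = -|X|^2$. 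After this step, every ingredient on the right-hand side of the proposition has appeared except the shifted Bochner Laplacian.

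To produce that term I would compare $(\nabla^{\mathscr{E},uX})^*\nabla^{\mathscr{E},uX}$ with $(\nabla^{\mathscr{E}})^*\nabla^{\mathscr{E}}$. Writing $\nabla^{\mathscr{E},uX}_{e_i} = \nabla^{\mathscr{E}}_{e_i} - u\,\theta_X(e_i)$ in a local orthonormal frame $\{e_i\}$, expanding the squares and using the identity $e_i(X,e_i) - (X,\nabla_{e_i}e_i) = (\nabla_{e_i}X,e_i)$, one obtains
\[
(\nabla^{\mathscr{E},uX})^*\nabla^{\mathscr{E},uX} \;=\; (\nabla^{\mathscr{E}})^*\nabla^{\mathscr{E}} + 2u\,\nabla^{\mathscr{E}}_X - u^2|X|^2 - u\,d^*\theta_X,
\]
with $d^*\theta_X = -\sum_i (\nabla_{e_i}\theta_X)(e_i)$. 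Substituting this into the previous expression and using $\mathscr{L}^{\mathscr{E}}(X) = \nabla^{\mathscr{E}}_X + \mu^{\mathscr{E}}(X)$ from the definition of the moment, the two $\pm u^2|X|^2$ contributions cancel and the first-order terms combine with coefficient $-2u + (-2u) + 1 = 1-4u$.

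The crucial geometric input, which annihilates the surviving divergence terms, is that $X$ acts by isometries: $X^M$ is a Killing vector field, and the antisymmetry $(\nabla_Y X,Z) + (\nabla_Z X,Y) = 0$ yields $(\nabla_{e_i}X,e_i) = 0$, whence $d^*\theta_X = 0$. Both $d^*\theta_X$ terms---the one from the anticommutator lemma and the one from the Bochner comparison---then drop out, leaving exactly the claimed identity. The $u = \tfrac14$ specialisation is then immediate from Proposition~\ref{p:momentofspinorbundle}, which in the present notation reads $\mu^{\mathscr{E}/S}(X) = \mu^{\mathscr{E}}(X) + \tfrac14 c(d\theta_X)$.

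The only genuine obstacle is sign/convention bookkeeping: one must be consistent about the sign of the shift in $\nabla^{\mathscr{E},\pm uX}$, the sign convention for $d^*\theta_X$, and the interpretation of the commutator in Lemma~\ref{commutatorformuladiracclifford} as the graded anticommutator of two odd operators. Once these are pinned down and the Killing property of $X$ is invoked to discard the divergences, the remaining assembly of terms is elementary.
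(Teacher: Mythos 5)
Your computation is correct, and it reaches the formula by a genuinely different route from the paper.  The paper exploits the fact that $D_u$ is itself a Dirac operator for the Clifford connection $\nabla^{\mathscr{E},-uX}=\nabla^{\mathscr{E}}+u\theta_X$ and applies the Lichnerowicz formula directly to $D_u$, obtaining the Bochner Laplacian $(\nabla^{\mathscr{E},-uX})^*\nabla^{\mathscr{E},-uX}$ with the ``wrong'' sign on the shift.  It then converts this to $(\nabla^{\mathscr{E},uX})^*\nabla^{\mathscr{E},uX}$ by comparing $D_u^2-D_{-u}^2$ in two ways: once using the two Lichnerowicz identities, and once via $D_u^2-D_{-u}^2=2u[D,c(\theta_X)]$ and Lemma~\ref{commutatorformuladiracclifford}, so the two Bochner Laplacians are related without any local-frame expansion.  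You instead apply Lichnerowicz only to the undeformed $D$, expand $D_u^2=D^2+u\{D,c(\theta_X)\}+u^2c(\theta_X)^2$ (correctly reading the paper's bracket as a graded anticommutator of odd operators, and silently correcting the evident $c(d\theta)$/$c(\theta)$ typo in the lemma), and then compute the Bochner comparison $(\nabla^{\mathscr{E},uX})^*\nabla^{\mathscr{E},uX}-(\nabla^{\mathscr{E}})^*\nabla^{\mathscr{E}}=2u\nabla^{\mathscr{E}}_X-u^2|X|^2-u\,d^*\theta_X$ by hand in a local orthonormal frame.  Both routes land on $H_u(X)$ with an extra $2u\,d^*\theta_X$, which both you and the paper eliminate by the Killing property of $X^M$ (equivalently, $d^*\theta_X=\tr\mu^M(X)=0$ since $\mu^M(X)$ is skew).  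The paper's subtraction trick is slicker and coordinate-free; your argument is more elementary and makes the cancellations ($\pm u^2|X|^2$, the assembly of $1-4u$, the two $d^*\theta_X$ terms) completely explicit, at the cost of the local-frame Bochner expansion.  Either is a valid proof.
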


\begin{proof}
 Note that since the $D_u$ is the Dirac operator associated to the connection $\nabla^{\mathscr{E},-uX}$, and $\nabla^{\mathscr{E},-uX}=\nabla^{\mathscr{E}}+u\theta_X$ is a Clifford connection; then we have the Lichnerowicz formula
 
 \begin{align*}
     D_u^2&=\Big(\nabla^{\mathscr{E},-uX}\Big)^*\nabla^{\mathscr{E},-uX}+\frac{1}{4}\kappa_M+c(F^{\mathscr{E}/S,-u})\\
     &=\Big(\nabla^{\mathscr{E},-uX}\Big)^*\nabla^{\mathscr{E},-uX}+\frac{1}{4}\kappa_M+c(F^{\mathscr{E}/S})+u c(d\theta_X)
 \end{align*}
 We may write a similar formula for $D_{-u}^2$, and by subtracting, we have
 \begin{align*}
     D_u^2-D_{-u}^2&=\Big(\nabla^{\mathscr{E},-uX}\Big)^*\nabla^{\mathscr{E},-uX}-\Big(\nabla^{\mathscr{E},uX}\Big)^*\nabla^{\mathscr{E},uX}+2uc(d\theta_X)
 \end{align*}
 we also have
 \begin{align*}
     D_u^2-D_{-u}^2&=2u[D,c(\theta_X)]\\
     &=-4u\nabla^{\mathscr{E}}_X+2uc(d\theta_X)+2ud^*\theta_X
 \end{align*}
 where we used Lemma \ref{commutatorformuladiracclifford} in the second equality. Note that $d^*\theta_X=\tr(\mu^M(X))=0$. Combining these formulas, we find
 \begin{align*}
     H_u(X)&=D_u^2+\mathscr{L}^{\mathscr{E}}(X)\\
     &=\Big(\nabla^{\mathscr{E},uX}\Big)^*\nabla^{\mathscr{E},uX}+\frac{1}{4}\kappa_M++c(F^{\mathscr{E}/S})+uc(d\theta_X)-4u\nabla_X^{\mathscr{E}}+\mathscr{L}^{\mathscr{E}}(X).
 \end{align*}
 Using the equality $\mathscr{L}^{\mathscr{E}}(X)=\nabla^{\mathscr{E}}_X+\mu^{\mathscr{E}}(X)$ we then achieve the result.
 
\end{proof}

\subsection{The Bismut Laplacian}\label{ss:Bismut}
As we see, for $u=1/4$, we obtain a simpler Lichnerowicz formula. The operator
\begin{equation}\label{bismutlaplacianu=1/4}
    H_{1/4}=D^2_{1/4}+\mathscr{L}^{\mathscr{E}}(X)
\end{equation}
is called the {\em Bismut Laplacian}.  From now on, we  drop the $1/4$ from the Bismut Laplacian notation and denote it by 
$H(X):=H_{1/4}(X)$. Following \cite{berline1992heat} we replace $D^2$ with $H_{1/4}$ in \eqref{eq:mckeansinger}. 

\subsection{The heat kernel in normal coordinates}\label{ss:innormalcoordinates}
The proof of the Kirillov formula \ref{th:kirillovformula}, is done through rescaling of the smoothing kernel of the (a conjugate of the) heat operator $e^{-tH(X)}$ in normal coordinates. Roughly one needs to trivialize the bundle $\mathscr{E}\to M$ over a small neighborhood using the normal coordinates $\br{x}\to \exp_m(\br{x})$, through parallel transport along radial geodesics. Here $\br{x}$ belongs to a small neighborhood of the origin in $T_mM$.

The Bismut Laplacian $H(X)$ is associated to the perturbed connection $\nabla^{\mathscr{E},\frac{1}{4}X}=\nabla^{\mathscr{E}}-\frac{1}{4}\theta_X$. So it is natural to use the parallel transport of this connection to define the trivialization of $\mathscr{E}$. However, it is more convenient to do a ``two-step" trivialization: first, trivialize the bundle using the connection $\nabla^{\mathscr{E}}$ and compute the heat kernel $e^{-tH(X)}$ in this trivialization  and then consider the conjugated kernel   $\rho_{\frac{1}4X}(\br{x})e^{-tH(X)}\rho_{\frac{1}4X}(\br{x})^{-1}$, where $\rho_{\frac{1}4X}(\br{x})$ is the parallel transport map on the trivial line bundle along the radial geodesics with respect to the connection $d-\frac{1}4\theta_X$ (see \eqref{eq:rhomap}). In this approach, in particular, the dependence of the result on $X$ is more explicit.

\section{Rescaled Bundles}\label{s:rescaledbundles}

In this paper, we achieve the rescaling calculus through a coordinate-free method, i.e. through tangent groupoid of the manifold $M$. It is, however, more general and arguably more natural to consider a more general deformation space $\DNC(V,M)$.
So in this and in the next sections, we introduce a setup that gives vector bundles over the deformation space $\DNC(V,M)$, generalizing the rescaled bundle of \cite{Higson2019SpinorsAT}. Specifying to the case of the tangent groupoid, we recover the  Getzler symbol calculus.

\subsection{Deformation to the normal cone}\label{ss:deformationtothenormalcone}
Assume $i:M\hookrightarrow V$ is an embedding of smooth manifolds. The normal bundle of this embedding is the quotient bundle $\mathcal{N}=TV|_M/TM$.  
As a set the deformation space $\textup{DNC}(V,M)$ is given by the disjoint union 
\[\textup{DNC}(V,M):=\mathcal{N}\times\{0\}\bigsqcup_{t{\neq 0}} V\times \{t\}.\]
To obtain a smooth structure, fix a Riemannian metric $g$ on $V$ and identify $\mathcal{N}$ with $TM^{\perp}$ the orthogonal complement of $TM$ in $TV|_M$.
Then we have $\textup{Exp}^g:\mathcal{W}\to U$, the Riemannian exponential map, from a neighborhood map $\mathcal{W}$ of $0$-section of $\mathcal{N}$ to a neighborhood $U$ of $M$ in $V$. The smooth structure on $\textup{DNC}(V,M)$ away from the normal bundle is given by the product manifold $M\times\mathbb{R}^{\neq 0}$. To obtain the manifold structure in a neighborhood of $\mathcal{N}$ we require the following map to be a diffeomorphism (c.f. \cite{Debord2017BlowupCF}):
\[\mathcal{W}\times\mathbb{R}\to \textup{DNC}(V,M)\]
\[(x,X,t) \mapsto\left\{
	\begin{array}{ll}
	 (\textup{Exp}_x^g(tX),t)   & t\neq0 \\
	
	(x,X,0) & t=0
	\end{array}
\right.\]
One may obtain local coordinates for this deformation space explicitly; see \cite{rouse2008schwartz, sadegh2018euler}.

For the construction of the rescaled bundle, we will use an algebraic approach to the deformation to the normal cone, following \cite{sadegh2018euler}.
This is inspired by the algebraic geometric definition of the deformation to the normal cone as the prime spectrum of the \emph{Rees algebra} (see for example \cite[Chapter ~5]{fulton2013intersection} and  \cite{higson2010tangent}).

Let $I_M\subset C^{\infty}(V)$ be the vanishing ideal of $M$. Consider the Rees algebra $\mathcal{A}(V,M)\subset C^{\infty}(V)[t,t^{-1}]$ given by
\[\mathcal{A}(V,M)=\bigoplus_{p=-\infty}^\infty I_M^pt^{-p}.\]
Therefore the Rees algebra consists of the Laurent polynomials
\[\sum_p f_pt^{-p}\]
where $f_p$ vanishes to $p$-th order along $M$. 

\begin{definition}\label{d:characterspectrum}
For a $\mathbb{C}$-algebra $\mathcal{A}$, the {\em character spectrum}, $\textup{Spec}(\mathcal{A})$, is the space of all algebra homomorphisms
\[\phi:\mathcal{A}\to\mathbb{C}\]
with the weak topology of pointwise convergence.
\end{definition}

It turns out the spectrum of the algebra $\mathcal{A}(V,M)$ consists of the following characters:
\begin{enumerate}
    \item For every $(v,\lambda)\in V\times\mathbb{R}^{\neq0} $
    \[\varepsilon_{(v,\lambda)}:\mathcal{A}(V,M)\to \mathbb{C}\]
\begin{equation}\label{genericcharacters}
    \sum_pf_pt^{-p}\mapsto\sum_pf_p(v)\lambda^{-p}
\end{equation}
    
    \item For $X_m\in\mathcal{N}_m=T_mV/T_mM$
    \[\varepsilon_{X_m}:\mathcal{A}(V,M)\to \mathbb{C}\]
    \begin{equation}\label{nongenericcharacters}
        \sum_pf_pt^{-p}\mapsto\sum_p\frac{1}{p!}X^pf_p(m)
    \end{equation}
     where $X\in\mathfrak{X}(V)$ is a vector field that represents $X_m\in \mathcal{N}_m$ at $m\in M.$
\end{enumerate}
Hence there is a one-to-one correspondence
\[\textup{Spec}(\mathcal{A}(V,M))\longleftrightarrow \mathcal{N}\times\{0\}\bigsqcup_{t\neq 0}V\times\{t\}.\]
This spectrum is indeed a manifold:
\begin{theorem}\cite{sadegh2018euler}
The spectrum $\textup{Spec}(\mathcal{A}(V,M))$ is a smooth manifold of dimension $\dim(V)+1$ that has a canonical submersion to $\mathbb{R}$. This manifold is called the {\em deformation to the normal cone} and is denoted by $\DNC(V,M)$. As a set, it is  given by a disjoint union of fibers over $\mathbb{R}$ as follows
\[\mathcal{N}\times\{0\}\bigsqcup_{t\neq 0}V\times\{t\}.\]
\end{theorem}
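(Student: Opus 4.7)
The plan is to carry out the proof in two independent stages: first show that the characters listed in (1) and (2) exhaust $\textup{Spec}(\mathcal{A}(V,M))$ as a set, and then equip this set with an explicit smooth atlas that realizes it as a $(\dim V + 1)$-dimensional manifold, with the submersion $\lambda : \DNC(V,M) \to \mathbb{R}$ being given by $\varepsilon_{(v,\lambda)} \mapsto \lambda$ and $\varepsilon_{X_m}\mapsto 0$.

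For the classification, I would let $\phi:\mathcal{A}(V,M) \to \mathbb{C}$ be a character, restrict it to the degree-zero Laurent subalgebra $C^\infty(V)$, and invoke the classical fact that every character of the smooth function algebra of a manifold is evaluation at a unique point $v\in V$. Setting $\lambda = \phi(t)$, the case $\lambda \neq 0$ forces $\phi = \varepsilon_{(v,\lambda)}$ because each generator $f_p t^{-p}$ satisfies the relation $\phi(f_p t^{-p}) \cdot \lambda^p = f_p(v)$. In the case $\lambda = 0$, the same relation applied to $f \in I$ (with $I$ the vanishing ideal of $M$) shows $f(v) = 0$ for all $f \in I$, hence $v = m \in M$; further, using $(fg)/t^2 = (f/t)(g/t)$ together with $\lambda\cdot\phi(fg/t^2) = \phi(fg/t) = 0$ for $f, g \in I$, the linear functional $f \mapsto \phi(f/t)$ factors through $(I/I^2)_m \simeq \mathcal{N}_m^*$ and is therefore given by a unique element $X_m \in \mathcal{N}_m$. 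An induction on $p$ using $\phi(f_p/t^p) \cdot \lambda = \phi(f_p/t^{p-1})$ recovers the full formula \eqref{nongenericcharacters}, with the factor $1/p!$ appearing from the symmetrization of iterated directional derivatives.

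For the smooth structure, I would cover the spectrum by two overlapping charts. The open locus $\{\lambda \neq 0\}$ is parametrized by $V \times \mathbb{R}^{\neq 0} \ni (v,\lambda) \mapsto \varepsilon_{(v,\lambda)}$, which is a bijection onto an open subset in the weak topology. To cover a neighborhood of $\mathcal{N} \times \{0\}$, I would pick a tubular neighborhood diffeomorphism $\varphi: U \hookrightarrow V$ defined on an open neighborhood $U$ of the zero section in $\mathcal{N}$, and set
\begin{equation*}
\Psi(X_m, t) = \begin{cases} \varepsilon_{(\varphi(tX_m),\,t)} & t \neq 0,\\ \varepsilon_{X_m} & t = 0,\end{cases}
\end{equation*}
on a sufficiently small neighborhood of $\mathcal{N} \times \{0\}$ in $\mathcal{N} \times \mathbb{R}$. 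The pullback under $\Psi$ of a generator $f_p t^{-p} \in \mathcal{A}(V,M)$ is $t^{-p} f_p(\varphi(tX_m))$, and since $f_p \in I^p$, Hadamard's lemma in tubular coordinates exhibits $f_p \circ \varphi$ as a smooth function on $U$ vanishing to order $p$ along the zero section, so the pullback extends smoothly across $t = 0$ with value $\tfrac{1}{p!} X^p f_p(m)$ for any vector field $X$ representing $X_m$. This simultaneously endows $\Psi$ with a smooth chart structure and verifies transition compatibility with the generic chart on the overlap $\{t \neq 0\}$.

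The main technical obstacle is the Hadamard-lemma step together with its converse: one must show not only that generators of $\mathcal{A}(V,M)$ pull back to smooth functions on $U \times \mathbb{R}$, but also that the weak topology generated by these pullbacks coincides with the standard manifold topology on $U \times \mathbb{R}$, so that $\Psi$ is a homeomorphism onto an open set. Independence of the smooth structure from the particular choice of tubular neighborhood reduces to the observation that two tubular neighborhoods differ by a diffeomorphism of $\mathcal{N}$ that is the identity on the zero section to first order, and then a further Hadamard calculation shows that the corresponding change of coordinates $\Psi^{-1}\circ \Psi'$ extends smoothly across $t = 0$.
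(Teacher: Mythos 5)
The paper does not prove this theorem; it cites \cite{sadegh2018euler}, and your proposal is essentially the approach taken there: classify the characters by restricting to the degree-zero subalgebra $C^\infty(V)\subset\mathcal{A}(V,M)$ and invoking $\textup{Spec}\,C^\infty(V)=V$, then equip the spectrum with a smooth atlas via a tubular-neighbourhood embedding and Hadamard's lemma. Your chart $\Psi$ is the ``zoom'' coordinate used in \cite{sadegh2018euler}, phrased there in terms of Euler-like vector fields, which by the present paper's Theorem~\ref{tubularnghbdequivalenteulerlikevectorfield} are equivalent to germs of tubular neighbourhoods, so the two routes coincide.

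Two remarks. First, in the $\lambda=0$ classification the relation you display, $\phi(f_p/t^p)\cdot\lambda=\phi(f_p/t^{p-1})$, with $\lambda=0$ only gives $\phi(f_p/t^{p-1})=0$; it does not by itself recover \eqref{nongenericcharacters}. The induction actually has to run through multiplicativity together with the smooth-category identity $I^p=I\cdot I^{p-1}$, so that $\phi(g_1\cdots g_p/t^p)=\prod_i\phi(g_i/t)$, after which a Leibniz computation at the point $m$ produces the $1/p!$. Your aside about ``symmetrization of iterated directional derivatives'' suggests you have this in mind, but the displayed relation is not the engine of that step. Second, the item you flag but do not carry out — that $\Psi$ is a homeomorphism onto an open set for the weak topology and that the two charts cover the spectrum — is precisely the technical heart of the argument in \cite{sadegh2018euler}; as written your proposal leaves that part open, which is fine provided it is understood as a pointer to the reference rather than a complete proof.
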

To gain insight into this theorem, it is useful to see  the algebra $\mathcal{A}(V,M)$ for $\DNC(V,M)$ as a ``homogeneous coordinate ring'' for a variety. Note that the vanishing order along the submanifold $M$, gives a filtration 
\[C^{\infty}(V)=I_0\supset I_1\supset\cdots\]
where $I_p=I_M^p$ is the ideal of functions vanishing to order $p$ along $M$.
Denote by $\mathcal{A}_0(V,M)$ the quotient algebra $\mathcal{A}(V,M)/t\mathcal{A}(V,M)$ which is naturally isomorphic to the associated graded algebra:
\[\mathcal{A}_0(V,M)\buildrel{\simeq}\over\longrightarrow\bigoplus_{p=0} I_p/I_{p+1}\]
\[\sum_pf_pt^{-p}\mapsto\sum_{q\geq0}\langle f_p\rangle_p\]
We have the inclusion $C^{\infty}(M)\hookrightarrow\mathcal{A}_0(V,M)$ as the zero-degree part. This corresponds to
the following fact 
\begin{lemma}
     The character spectrum of $\mathcal{A}_0(V,M)$ is the normal bundle $\mathcal{N}\to M.$ The quotient $\mathcal{A}_{0,m}(V,M)$  of the algebra $\mathcal{A}_0(V,M)$ by the ideal of functions vanishing at $m\in M$ is isomorphic to the polynomial algebra over the normal space $\mathcal{N}_m$:
     \[\sum_pf_pt^{-p}\mapsto [X_m\mapsto \sum_p\frac{1}{p!}X^p.f_p|_m]\]
     for $X_m\in\mathcal{N}_m.$
\end{lemma}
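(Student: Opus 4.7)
The plan is to deduce both assertions from the graded-algebra isomorphism $\A_0(V,M) \xrightarrow{\sim} \bigoplus_{p\geq 0} I_p/I_{p+1}$ just recorded in the excerpt. In this language the spectrum is obtained by specializing the general classification of characters of $\A(V,M)$, and the quotient $\A_{0,m}$ is obtained by modding out the ideal generated by $\mf{m}_m \subset C^\infty(M) = I_0/I_1$, which I would compute degree by degree.

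For the spectrum, any character $\phi$ of $\A_0(V,M)$ pulls back to a character $\tilde\phi$ of $\A(V,M)$ that kills $t\cdot\A(V,M)$; in particular $\tilde\phi(t)=0$. Among the two families of characters of $\A(V,M)$ listed earlier, the generic characters satisfy $\varepsilon_{(v,\lambda)}(t)=\lambda\neq0$, while $\varepsilon_{X_m}(t)=0$ (the term with $p=-1$ lies outside the nonnegative-$p$ range of the defining sum). Hence only $\{\varepsilon_{X_m} : X_m \in \N\}$ descend, giving the bijection $\textup{Spec}(\A_0(V,M)) \cong \N$.

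For the polynomial-algebra claim I would compute $\A_{0,m}=\A_0(V,M)/J_m$ degree by degree, where $J_m$ is the ideal generated by $\mf{m}_m$. In degree $0$ the quotient is $C^\infty(M)/\mf{m}_m = \bC$. In degree $p\geq 1$ it is $I_p/(I_{p+1}+\mf{m}_m I_p)$, which I aim to identify with $\Sym^p(\N_m^*)$ via the $p$-th normal symbol map. In local coordinates $(y,z)$ with $M=\{z=0\}$ and $y(m)=0$, any $f\in I_p$ has an expansion $f=\sum_{|\alpha|\geq p} g_\alpha(y)z^\alpha$; quotienting by $I_{p+1}$ retains only $|\alpha|=p$ terms, and quotienting further by $\mf{m}_m I_p$ replaces each $g_\alpha(y)$ by $g_\alpha(0)$, leaving the homogeneous polynomial $\sum_{|\alpha|=p} g_\alpha(0) z^\alpha \in \Sym^p(\N_m^*)$. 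Summing over $p$ yields $\A_{0,m}\cong\Sym(\N_m^*)$, the polynomial algebra on $\N_m$. Under this identification $\sum_p f_p t^{-p}$ maps to the polynomial $X_m\mapsto\sum_p\tfrac{1}{p!}X^p f_p(m)$, as stated, since $\tfrac{1}{p!}X^p f_p(m)$ is precisely the value at $X_m$ of the $p$-th normal symbol of $f_p$ at $m$.

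The main obstacle is the intrinsic identification $I_p/(I_{p+1}+\mf{m}_m I_p)\cong\Sym^p(\N_m^*)$; the local-coordinate argument above is straightforward but one must verify independence of the choice of trivialization. A coordinate-free alternative, better suited to the tone of the paper, is to first establish the canonical $C^\infty(M)$-module isomorphism $I_p/I_{p+1}\cong\Gamma(M,\Sym^p(\N^*))$ via the normal symbol $f\mapsto\tfrac{1}{p!}d^p_\N f|_M$, and then take the fiber at $m$; once this is in hand, the remaining steps reduce to the bookkeeping outlined above.
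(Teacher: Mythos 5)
The paper states this lemma without proof, presenting it as a consequence of the preceding identification of $\mathcal{A}_0(V,M)$ with the associated graded algebra $\bigoplus_p I_p/I_{p+1}$, so there is no proof in the text to compare against; your argument correctly supplies the omitted details.

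For the spectrum claim, your reduction is exactly right: characters of $\mathcal{A}_0(V,M)$ are in bijection with characters of $\mathcal{A}(V,M)$ that annihilate the ideal $t\,\mathcal{A}(V,M)$, and from the classification \eqref{genericcharacters}--\eqref{nongenericcharacters} the only ones sending $t\mapsto 0$ are the $\varepsilon_{X_m}$, giving $\textup{Spec}(\mathcal{A}_0(V,M))\cong \mathcal{N}$. For the polynomial-algebra claim the degree-by-degree reduction to $I_p/(I_{p+1}+\mathfrak{m}_m I_p)\cong \Sym^p(\mathcal{N}_m^*)$ is the right move; the only thing to tighten is the phrase ``any $f\in I_p$ has an expansion $f=\sum_{|\alpha|\geq p}g_\alpha(y)z^\alpha$'', which should be read as a Hadamard-type representation $f=\sum_{|\alpha|=p}g_\alpha(y,z)z^\alpha$ with smooth $g_\alpha$ (or as a formal Taylor series), not a literal convergent series. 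This has no effect on the conclusion: the $p$-th normal symbol $f\mapsto\sum_{|\alpha|=p}g_\alpha(y,0)z^\alpha$ is well-defined, and the coordinate-free route you sketch at the end is cleaner still — use the canonical $C^\infty(M)$-module isomorphism $I_p/I_{p+1}\cong\Gamma(M,\Sym^p\mathcal{N}^*)$ together with the elementary fact that $\Gamma(M,E)/\mathfrak{m}_m\Gamma(M,E)\cong E_m$ for any vector bundle $E\to M$, then sum over $p$. The displayed formula $\sum_p f_pt^{-p}\mapsto\bigl[X_m\mapsto\sum_p\tfrac{1}{p!}X^pf_p(m)\bigr]$ then drops out as the composite with the alternative presentation $\varepsilon_{X_m}=\varepsilon_m\circ\exp(tX)$ of Subsection~\ref{ss:alternativeXm}.
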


 One important example of the deformation to the normal cone is the \emph{tangent groupoid}:
\begin{example}\label{tangentgroupoid}
Let $M$ be a smooth manifold. The deformation to the normal cone associated to the diagonal embedding $M\hookrightarrow M\times M$ is called the tangent groupoid and is denoted by
\[\mathbb{T}M:=\DNC(M\times M,M).\]
As a set, it is given by the disjoint union
\[M\times M\times\bR^{\neq 0}\bigsqcup TM\times\{0\}\]
which is naturally fibered over $\bR$.

The tangent groupoid is indeed a smooth groupoid, $\bT M\rightrightarrows{M\times\bR}$. 
 The source and target maps are given so that for $t\neq 0$, they restrict to pair groupoid
\[M\times M\times\{t\}\rightrightarrows{M}\times\{t\}\]
and for $t=0$, the source and target map are the same as the projection map of the vector bundle:
\[TM\times\{0\}\to M\times\{0\}.\]

\end{example}

\subsection{Alternative  formula for $\epsilon_{X_m}$}\label{ss:alternativeXm}
We now describe Higson and Yi's alternative presentation of the formula \eqref{nongenericcharacters}, \cite{Higson2019SpinorsAT}, which justifies many formulas in the upcoming subsections and sections.
For every vector field $X\in\mathfrak{X}(V)$ we have the derivation map
\[tX:\mathcal{A}(V,M)\to \mathcal{A}(V,M)\]
\[\sum_pf_pt^{-p}\mapsto \sum_pX.f_pt^{-p+1}\]
which descends to the quotient algebra
\[tX:\mathcal{A}_0(V,M)\to \mathcal{A}_0(V,M).\]
The action of $tX$ on $\mathcal{A}_0(V,M)$ is  locally nilpotent, in the sense that for each $f\in \mathcal{A}_0(V,M)$ there exists an integer $n=n(f)$ such that $(tX)^nf=0$. Hence, we may define the exponential homomorphism
\[\exp(tX):\mathcal{A}_0(V,M)\to \mathcal{A}_0(V,M)\]
as the {\em finite sum}:
\[\exp(tX)f:= \sum_j \frac{(tX)^jf}{j!}, \qquad f\in \mathcal{A}_0(V,M).\]

Now we may rewrite the formula \eqref{nongenericcharacters} as the composition
\[\varepsilon_{X_m}=\varepsilon_m\circ\exp(tX)\]
where 
\[\varepsilon_m=\varepsilon_{0_m}:\mathcal{A}_0(V,M)\to\mathbb{C},\qquad \sum_pf_pt^{-p}\mapsto f_0(m)
\]
is the evaluation map at $X_m=0$.

\subsection{The scaling order}\label{ss:Scalingorder}


Let $E\to V$ be a vector bundle such that the restricted vector bundle $F=E|_M\to M$ has a filtration $F^1\subset F^2\subset \cdots \subset F^q=F$.
We assume the bundle of endomorphisms $\textup{End}(E)$ has an algebra filtration 
\begin{equation}\label{eq:FiltrationEnd}
\textup{End}(E)= \textup{End}(E)^0\subset \textup{End}(E)^1\subset\cdots\subset \textup{End}(E)^q= \textup{End}(E),
\end{equation}
whose restriction to $M$ is compatible with the filtration on $F$ in the sense that 
\begin{equation}\label{eq:comatibilityEnd}
   A:F^\bullet \to F^{\bullet+j}, \qquad\text{for}\quad A\in \textup{End}(E)^j.
\end{equation}

We consider a connection $\nabla$ on $E\to V$
with the following properties:
\begin{itemize}
    \item The induced connection $i^*\nabla$ is compatible with the filtration in the sense that 
\begin{equation}\label{technicalcondotionforrescaledbundle1}
    i^*\nabla:\Gamma(F^j)\to \Gamma(F^j)\otimes\Omega^1(M), 
    \qquad j=1\ldots q.
\end{equation}
  
  \item The curvature $K$ of $\nabla$ has filtration order at most 2:
  \begin{equation}\label{technicalcondotionforrescaledbundle2}
      K\in \Gamma(\textup{End}(E)^2)\otimes \Omega^2(M).
  \end{equation}
  
      \item The induced connection $\nabla^{\textup{End}(E)}$  {has filtration order 0}:
    \begin{equation}\label{technicalcondotionforrescaledbundle3}
    \nabla^{\textup{End}(E)}:\Gamma(\textup{End}(E)^j)\to \Gamma(\textup{End}(E)^{j})\otimes\Omega^1(M).
\end{equation}
    

\end{itemize}

The space of sections $\Gamma(\textup{End}(E))$ is filtered by the algebra filtration of $\textup{End}(E)$. We use the notation $o^g(\phi)$ for the order of $\phi\in\Gamma(\textup{End}(E))$ in this filtration. By definition we have
\begin{equation}\label{subadditiveproperty}
    o^g(\phi_1\phi_2)\leq o^g(\phi_1)+o^g(\phi_2).
\end{equation}
The filtration $F^\bullet$ induces a filtration on $\textup{End}(E|_M)$, which might differ from the filtration induced by \eqref{eq:FiltrationEnd}. 
For a section $\phi\in \Gamma(\textup{End}(E))$ we denote by $o^f(\phi)$ the order of this filtration of the restriction $
\phi|_M$. Then it follows from \eqref{eq:comatibilityEnd} that 
\[o^f(\phi)\leq o^g(\phi).\]
Note, however, that the order function $o^f$ fails to satisfy the property \eqref{subadditiveproperty}, in general. 

\begin{definition}\label{def:Getzlerorder}
A differential operator $D$ acting on $\Gamma(E)$ is of \emph{Getzler order} at most $p$ if locally it can be written as the sums of terms of the form
\[\phi\nabla_{X_1}\cdots\nabla_{X_l}\]
where $\phi\in \Gamma(\textup{End}(E))$ and $X_1,\cdots,X_l\in\mathfrak{X}(M)$ where $o^g(\phi)+l\leq p.$ We denote the Getzler order of $D$ by $o^g(D).$
\end{definition}

We had to use the filtration order $o^g$, rather than  $o^f$,  on $\Gamma(\textup{End}(E))$  in the definition of Getzler order to obtain the following:

\begin{lemma}
     For two differential operator $D_1, D_2$ acting on $\Gamma(E)$ we have
     \[o^g(D_1D_2)\leq o^g(D_1)+o^g(D_2).\] \qed
\end{lemma}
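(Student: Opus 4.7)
The plan is to reduce the inequality to the case of single monomials and then use the Leibniz rule to move all endomorphism factors to the left. By linearity it suffices to consider
\[
D_1 = \phi\,\nabla_{X_1}\cdots\nabla_{X_k}, \qquad D_2 = \psi\,\nabla_{Y_1}\cdots\nabla_{Y_l},
\]
with $a := o^g(\phi)$, $b := o^g(\psi)$, so that $o^g(D_1) \le a+k$ and $o^g(D_2) \le b+l$. The only obstacle to rewriting $D_1D_2$ in the form required by Definition~\ref{def:Getzlerorder} is that in $D_1 D_2$ the endomorphism $\psi$ sits between two blocks of covariant derivatives, whereas the definition demands all endomorphism factors on the left.

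I would commute $\psi$ past each $\nabla_{X_i}$ via the identity
\[
\nabla_{X_i}\psi \;=\; \psi\,\nabla_{X_i} \;+\; \nabla^{\textup{End}(E)}_{X_i}(\psi).
\]
Applying this repeatedly and expanding, $D_1D_2$ becomes a finite sum of terms of the form
\[
\phi \cdot \bigl(\nabla^{\textup{End}(E)}_{X_{i_1}}\!\cdots\nabla^{\textup{End}(E)}_{X_{i_j}}\psi\bigr) \cdot \nabla_{Z_1}\cdots\nabla_{Z_m},
\]
where $\{X_{i_1},\dots,X_{i_j}\}\subseteq\{X_1,\dots,X_k\}$ is the subset of $X$'s that got ``consumed'' by the commutator, and $Z_1,\dots,Z_m$ is the remaining ordered list of $X_i$'s (those not consumed) followed by all the $Y_j$'s, so $m = (k-j)+l$.

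Next I would estimate the Getzler order of each such term. Property~\eqref{technicalcondotionforrescaledbundle3} says $\nabla^{\textup{End}(E)}$ preserves the filtration order $o^g$, so iterating gives
\[
o^g\!\bigl(\nabla^{\textup{End}(E)}_{X_{i_1}}\cdots\nabla^{\textup{End}(E)}_{X_{i_j}}\psi\bigr)\;\le\; o^g(\psi)\;=\;b.
\]
Then subadditivity \eqref{subadditiveproperty} of $o^g$ on products yields
\[
o^g\!\Bigl(\phi\cdot\nabla^{\textup{End}(E)}_{X_{i_1}}\cdots\nabla^{\textup{End}(E)}_{X_{i_j}}\psi\Bigr)\;\le\; a+b,
\]
so the whole term has Getzler order $\le (a+b)+m = a+b+k-j+l \le (a+k)+(b+l)$, proving the claim.

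The main obstacle is purely combinatorial: setting up the induction on the number of commutations and identifying the summands cleanly. Conceptually, the crucial point, and the reason the definition uses $o^g$ rather than $o^f$, is that we need both the subadditivity of the order function on products of endomorphisms and the fact that $\nabla^{\textup{End}(E)}$ does not increase it; the filtration on $\textup{End}(E)$ supplied by \eqref{eq:FiltrationEnd}--\eqref{technicalcondotionforrescaledbundle3} is precisely what makes both of these hold.
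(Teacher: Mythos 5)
The paper states this lemma without a written proof (only a tombstone), so there is nothing explicit to compare against; your argument is correct and supplies exactly what the authors leave implicit. The two facts you invoke — subadditivity \eqref{subadditiveproperty} of $o^g$ under products and order-preservation \eqref{technicalcondotionforrescaledbundle3} of $\nabla^{\textup{End}(E)}$ — are precisely the properties the text preceding the lemma flags as the reason for defining Getzler order via $o^g$ rather than $o^f$, and the Leibniz-rule reshuffling of $\psi$ past the $\nabla_{X_i}$'s is the natural way to use them. One small slip in the write-up: after reducing to monomials you record $o^g(D_1)\le a+k$, which is true but points the wrong way for the conclusion; what the argument actually needs is that, by definition of Getzler order, you may \emph{choose} the monomial decompositions of $D_1$ and $D_2$ so that each summand satisfies $a+k\le o^g(D_1)$ and $b+l\le o^g(D_2)$, so that your final bound $(a+k)+(b+l)$ is itself $\le o^g(D_1)+o^g(D_2)$.
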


The space of sections $\Gamma(E)$ has natural filtration obtained by restricting to the submanifold $M$. We denote the corresponding filtration order by $o^f(\sigma)$ for $\sigma\in \Gamma(E)$ (if $\sigma=0$ we set $o^f(\sigma)=-\infty$).

We need yet another filtration on $\Gamma(E)$, defined by the \emph{scaling order} 
\[o^{sc}(\sigma)=\min_{D}\{o^{g}(D)-o^{f}(D\sigma)\}.\]

\begin{remark}
The definition of the scaling order might seem unnatural at first. But it is modeled on the following formula for the vanishing order of a function:

For $f\in C^{\infty}(V)$, define $o^{{val}}(f)$ to be $0$ if $f|_{M}$ is not the zero function, otherwise define
$o^{val}(f)=-\infty.$ One easily sees that the vanishing order of $f$ along the submanifold $M$, $o^{van}(f)$, is given by the equality
\[o^{van}(f)=\min_D\{o(D)-o^{val}(Df)\}\]
where $o(D)$ is the ordinary differential order  and the minimum is taken over all differential operators on $V$.
\end{remark}
 It follows from the definition of the scaling order that
\begin{lemma}
     For a differential operator $D$ acting on $\Gamma(E)$ with $o^g(D)\leq q$ and $\sigma\in \Gamma(E)$ we have
     \[o^{sc}(D\sigma)\geq o^{sc}(\sigma)-q.\]
     \qed
\end{lemma}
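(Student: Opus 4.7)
The plan is to prove this as a direct consequence of the preceding lemma on the Getzler order of compositions, combined with the minimum-over-operators definition of $o^{sc}$. No new analytic input is needed; the argument is purely combinatorial manipulation of the filtration inequalities.

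First I would unfold the definition of the right-hand side. By definition of the scaling order,
\[o^{sc}(\sigma)=\min_{D''}\{o^g(D'')-o^f(D''\sigma)\},\]
which in particular means that for every differential operator $D''$ on $\Gamma(E)$ one has the pointwise inequality
\[o^f(D''\sigma)\leq o^g(D'')-o^{sc}(\sigma).\]
This is the only form in which I will use the hypothesis on $\sigma$.

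Next I would attack the left-hand side. By definition,
\[o^{sc}(D\sigma)=\min_{D'}\{o^g(D')-o^f(D'D\sigma)\},\]
so it suffices to show that for an arbitrary differential operator $D'$ one has
\[o^g(D')-o^f(D'D\sigma)\geq o^{sc}(\sigma)-q.\]
Here I would apply the composition lemma $o^g(D'D)\leq o^g(D')+o^g(D)\leq o^g(D')+q$, and then use the inequality from the previous paragraph with $D''=D'D$. This yields
\[o^f(D'D\sigma)\leq o^g(D'D)-o^{sc}(\sigma)\leq o^g(D')+q-o^{sc}(\sigma),\]
and rearranging gives exactly the desired bound. Taking the minimum over $D'$ completes the proof.

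There is no real obstacle: the lemma is essentially a tautology given the composition estimate. The only thing to be careful about is the convention when $D\sigma=0$ or $D''\sigma=0$, in which case $o^f=-\infty$ and the scaling order is $+\infty$; the inequality then holds trivially, so no special case analysis is required.
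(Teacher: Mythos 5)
Your argument is correct and is exactly the routine deduction the paper has in mind (the paper provides no written proof, marking the lemma with \qed as immediate). You correctly identify the two ingredients — the minimum-over-operators definition of $o^{sc}$ and the submultiplicativity lemma $o^g(D_1D_2)\leq o^g(D_1)+o^g(D_2)$ — and combine them by substituting $D''=D'D$, which is all there is to it.
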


\subsection{The Taylor order} In this subsection, we obtain a formula for the scaling order in local coordinates. 

\begin{definition}\label{def:Eulerlike}
Let $M$ be an embedded submanifold of a manifold $V$. A vector field $\mathcal{R}\in\mathfrak{X}(V)$ is called Euler-like for the embedding $M\to V$, if for every function $f\in C^{\infty}(V)$ vanishing to  $p$-th order along $M$ 
\[\mathcal{R}f=pf+g\]
where $g\in C^{\infty}(V)$ vanishes to order $p+1$ along $M$.
\end{definition}

The Euler-like vector fields can be used to obtain a tubular neighborhood embeddings:

\begin{theorem}\label{tubularnghbdequivalenteulerlikevectorfield}\cite{bursztyn2019splitting, sadegh2018euler}
There is a bijection between the germs of Euler-like vector fields and germs of tubular neighborhood embeddings.
\end{theorem}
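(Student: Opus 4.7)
The plan is to exhibit maps in both directions and verify they are mutually inverse on the level of germs.

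\textbf{Forward direction.} Given a tubular neighborhood embedding $\psi:\mathcal{N}\to U\subseteq V$, set $\mathcal{R} := \psi_*\mathcal{E}_{\mathcal{N}}$, where $\mathcal{E}_{\mathcal{N}}$ is the Euler vector field on the total space of $\mathcal{N}$ (the infinitesimal generator of the fiberwise scaling $v\mapsto\lambda v$). To check the Euler-like condition, note that $\psi$ identifies the ideal $I_p$ of functions on $V$ vanishing to order $p$ along $M$ with the analogous ideal along the zero section of $\mathcal{N}$; modulo $I_{p+1}$, the latter is spanned by functions that are fiberwise homogeneous polynomials of degree $p$, on which $\mathcal{E}_{\mathcal{N}}$ acts by multiplication by $p$. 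Hence $\mathcal{R}(f)-pf\in I_{p+1}$ for every $f\in I_p$, which is exactly the Euler-like condition.

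\textbf{Reverse direction.} This is the substantive direction. Given Euler-like $\mathcal{R}$, apply the defining condition with $p=0$ to get $\mathcal{R}(f)\in I_1$ for all $f\in C^\infty(V)$, so that $\mathcal{R}|_M=0$; the linearization $d\mathcal{R}|_M$ then descends to a bundle endomorphism of $\mathcal{N}$, and the $p=1$ case forces this endomorphism to be $\id_{\mathcal{N}}$. I would construct $\psi:\mathcal{N}\to V$ by renormalizing the flow of $\mathcal{R}$: for $v\in\mathcal{N}_m$, choose a lift $\tilde v\in T_mV$ and a curve $c:[0,\delta)\to V$ with $c(0)=m$, $c'(0)=\tilde v$, and define
\[
\psi(v) := \lim_{s\to 0^+}\Phi^{\mathcal{R}}_{-\log s}\bigl(c(s)\bigr),
\]
where $\Phi^{\mathcal{R}}_t$ denotes the flow of $\mathcal{R}$. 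Heuristically $\Phi^{\mathcal{R}}_{-\log s}$ scales normal directions by $s^{-1}$, so it dilates $c(s)\approx m+s\tilde v + O(s^2)$ to the point of $V$ corresponding to $v\in\mathcal{N}_m$. The Euler-like condition ensures that the limit exists, is smooth in $v$, and is independent of the choices of $\tilde v$ and $c$. By construction $\psi_*\mathcal{E}_{\mathcal{N}}=\mathcal{R}$, and $d\psi|_M=\id_{\mathcal{N}}$, so the inverse function theorem makes $\psi$ a diffeomorphism of a neighborhood of the zero section onto a neighborhood of $M$.

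\textbf{Inverses and main difficulty.} That the two constructions are mutually inverse (on germs) follows from the uniqueness of flows: for the model field $\mathcal{E}_{\mathcal{N}}$ itself one has $\Phi^{\mathcal{E}}_{-\log s}(sv)=v$, so the flow-renormalization of the pushed-forward Euler field trivially recovers the original $\psi$; in the other direction, a local flow computation shows the newly built $\psi$ indeed satisfies $\psi_*\mathcal{E}_{\mathcal{N}}=\mathcal{R}$. The main technical obstacle is the analytic step in the reverse direction, namely verifying convergence and smoothness of the limit defining $\psi(v)$. A cleaner, more conceptual route, in the spirit of Subsection \ref{ss:deformationtothenormalcone}, is to observe that the Euler-like condition is exactly what is needed for the derivation $\sum f_p t^{-p}\mapsto \sum (\mathcal{R}f_p)t^{-p}$ of $C^\infty(V)[t,t^{-1}]$ to preserve the subalgebra $\mathcal{A}(V,M)$. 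This produces a smooth vector field on $\DNC(V,M)$ whose restriction to $V\times\{t\}$ ($t\neq 0$) is $\mathcal{R}$ and whose restriction to $\mathcal{N}\times\{0\}$ is $\mathcal{E}_{\mathcal{N}}$; conjugating the two flows via the smooth structure on $\DNC(V,M)$ then furnishes the tubular embedding $\psi$ without ever having to prove convergence of an explicit limit.
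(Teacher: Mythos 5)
The paper does not prove Theorem~\ref{tubularnghbdequivalenteulerlikevectorfield}: it is stated with citations to \cite{bursztyn2019splitting, sadegh2018euler} and the only further content in the text is the local coordinate description of the correspondence after \eqref{trivializingneighborhood}. So there is no ``paper's proof'' to compare against; what you have written is a reconstruction of the cited references, and I will assess it on those terms.

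Your forward direction is clean and correct: pushing forward the fiberwise Euler vector field of $\mathcal{N}$ through a tubular embedding produces an Euler-like field, because a homogeneous degree-$p$ function on $\mathcal{N}$ is an eigenfunction of eigenvalue $p$ for $\mathcal{E}_\mathcal{N}$ modulo higher order. Your flow-renormalization formula for the reverse direction is also the right idea (it is essentially the mechanism of \cite{bursztyn2019splitting}), and you are right that the burden of proof is in the existence, smoothness, and choice-independence of the limit, plus a domain-of-flow issue that you do not mention: $-\log s\to+\infty$ as $s\to 0^+$, and one has to check that $c(s)$ is close enough to $M$ for the flow to be defined for that long. These are all addressable but nontrivial; the standard route in \cite{bursztyn2019splitting} first fixes an auxiliary tubular neighborhood, pulls $\mathcal{R}$ back to $\mathcal{N}$, and then compares the conjugated flows $\kappa_\lambda^{-1}\circ\Phi^{\mathcal{R}_0}_{\log\lambda}$ with the model flow, which keeps everything in a fixed neighborhood of the zero section and makes the convergence estimates cleaner.

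The ``more conceptual route'' you sketch at the end is incomplete as stated. The derivation $\sum f_p t^{-p}\mapsto\sum(\mathcal{R}f_p)t^{-p}$ does preserve $\mathcal{A}(V,M)$ when $\mathcal{R}$ is Euler-like, and the induced vector field on $\DNC(V,M)$ does restrict to $\mathcal{R}$ at $t\neq 0$ and to $\mathcal{E}_\mathcal{N}$ at $t=0$. But this vector field is \emph{tangent to the fibers of} $\DNC(V,M)\to\bR$, so its flow does not move you between the $t=0$ fiber $\mathcal{N}$ and the $t\neq 0$ fibers $V$, and hence does not by itself ``furnish the tubular embedding.'' To actually produce the diffeomorphism $\psi:\mathcal{N}\to V$ from the DNC picture one needs an additional ingredient that mixes the $t$-fibers -- for instance incorporating a $-t\partial_t$ term into the derivation, or invoking the canonical $\bR^\times$ zoom action on $\DNC(V,M)$ together with the fiberwise vector field. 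As written, the claim that the limit argument can be bypassed entirely is not justified.
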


The correspondence given by this theorem can be described as follows: Given a tubular neighborhood embedding we can choose local coordinates 
\begin{equation}\label{trivializingneighborhood}
    (x^1,\cdots,x^l,y^1,\cdots,y^k):\mathcal{N}\to \mathbb{R}^l\times \mathbb{R}^k
\end{equation}
near a point of $M$, where $x_i$'s form local coordinates on $M$ and $y_j$'s are the linear coordinates for the fibers of the normal bundle. Under the correspondence in Theorem \ref{tubularnghbdequivalenteulerlikevectorfield}, the associated Euler-like vector field in the trivializing neighborhood is given by
\[\mathcal{R}=\sum_jy^j\partial_{y^j}.\]

Using the trivialization \eqref{trivializingneighborhood}, we call a vector field \emph{horizontal} if it is a linear combination of coordinate vector fields $\partial_{x^i}$'s and \emph{vertical} if it is a linear combination of coordinate vector fields $\partial_{y^j}$'s. In particular, the Euler-like vector field is a vertical vector field.

\begin{definition}
A section in $\sigma\in\Gamma(E)$ is called {\em $\mathcal{R}$-synchronous} if 
\[\nabla_{\mathcal{R}}\sigma=0\]
in a neighborhood of $M$.
\end{definition}
Using this concept, we define the Taylor expansion of a section $\sigma\in \Gamma(E)$ in the trivializing neighborhood:
The formal sum $\sum_I \sigma_Iy^I$
is a Taylor expansion for $\sigma$ if 
\begin{itemize}
    \item $\sigma_I\in\Gamma(E)$ is an $\mathcal{R}$-synchronous section.
    \item For every $N\geq0$, the difference
    \[\sigma-\sum_{|I|<N}\sigma_Iy^I\]
    vanishes to $N$-th order.
\end{itemize}
\begin{definition}\label{d:Taylororder}
For a section $\sigma\in\Gamma(E)$ with support in the trivializing neighborhood, with Taylor expansion $\sum_I \sigma_Iy^I$ we define the \emph{Taylor order} by the formula
\[o^t(\sigma):=\min_{|I|}\{|I|-o^{f}(\sigma_{I})\}.\]
\end{definition}
We will show that the Taylor order equals the scaling order. To prove this, we need the following lemma:
\begin{lemma}\label{l:normalderivativesynchronous}
     Let $\sigma\in\Gamma(E)$ be an $\mathcal{R}$-synchronous section. 
     \begin{itemize}
         \item If $Y$ is a vertical vector field, then we have the Taylor expansion
         \[\nabla_Y\sigma\simeq \sum_{|I|>0}\omega_I(\sigma)y^I\]
         where $\omega_I$'s are endomorphisms of the vector bundle $E$ of order at most $2$.
         \item If $X$ is a horizontal vector field, then we have the following Taylor expansion
         \[\nabla_X\sigma\simeq \sigma_0+\sum_{|I|>0}\eta_I(\sigma)y^I\]
         where  $\eta_I$'s are  endomorphisms of the vector bundle $E$ of order at most $2$ and $o^f(\sigma_0)\leq o^f(\sigma)$. 
     \end{itemize}
\end{lemma}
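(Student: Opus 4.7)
The plan is to exploit the $\mathcal{R}$-synchronicity of $\sigma$ (i.e.\ $\nabla_{\mathcal{R}}\sigma=0$) together with the commutator identity
\[ [\nabla_{\mathcal{R}},\nabla_Z]=\nabla_{[\mathcal{R},Z]}+K(\mathcal{R},Z) \]
for an arbitrary vector field $Z$, where $K$ denotes the curvature of $\nabla$. Applied to $\sigma$ this yields the basic relation $\nabla_{\mathcal{R}}\nabla_Z\sigma=\nabla_{[\mathcal{R},Z]}\sigma+K(\mathcal{R},Z)\sigma$. The key algebraic input is that $\nabla_{\mathcal{R}}$ acts on a synchronous Taylor monomial $\tau\, y^I$ by multiplication by $|I|$, so for every integer $c\geq 1$ the operator $(c+\nabla_{\mathcal{R}})$ is invertible term by term on synchronous Taylor series.

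For the vertical case I would reduce by linearity to $Y=\partial_{y^k}$; the bracket $[\mathcal{R},Y]=-Y$ turns the basic relation into $(1+\nabla_{\mathcal{R}})\nabla_Y\sigma=K(\mathcal{R},Y)\sigma$. Because $\mathcal{R}=\sum_j y^j\partial_{y^j}$, the right-hand side has a manifest factor of $y$, hence a synchronous Taylor expansion starting in $y$-degree $\geq 1$, and the inversion produces $\nabla_Y\sigma\simeq\sum_{|I|>0}\omega_I(\sigma)\,y^I$. To verify that each $\omega_I\in \Gamma(\textup{End}(E)^2)$, I would expand each $K(\partial_{y^j},Y)$ as a synchronous Taylor series $\sum_L\psi_{jL}y^L$ and invoke: (i) assumption \eqref{technicalcondotionforrescaledbundle2}, giving $K(\partial_{y^j},Y)\in\Gamma(\textup{End}(E)^2)$; (ii) assumption \eqref{technicalcondotionforrescaledbundle3}, whereby $\mathcal{R}$-parallel transport preserves the filtration on $\textup{End}(E)$ and hence $\psi_{jL}\in \Gamma(\textup{End}(E)^2)$; and (iii) the Leibniz rule, which shows that the product of a synchronous endomorphism and a synchronous section is again synchronous, so that the expression is already in synchronous Taylor form and one may read off $\omega_I$ directly as an element of $\Gamma(\textup{End}(E)^2)$.

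For the horizontal case, taking $X=\partial_{x^i}$ and using $[\mathcal{R},X]=0$ gives $\nabla_{\mathcal{R}}\nabla_X\sigma=K(\mathcal{R},X)\sigma$. The same inversion argument determines the coefficients $\eta_I(\sigma)$ for $|I|\geq 1$ and controls their filtration order. The new feature is the $|I|=0$ term $\sigma_0$, which is unconstrained by the equation; it must equal the unique $\mathcal{R}$-synchronous section agreeing on $M$ with $(\nabla_X\sigma)|_M=(i^*\nabla)_X(\sigma|_M)$. Assumption \eqref{technicalcondotionforrescaledbundle1} then yields $o^f(\sigma_0)\leq o^f(\sigma)$, as required.

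The main obstacle is the bookkeeping that promotes the formal expression $(1+\nabla_{\mathcal{R}})^{-1}K(\mathcal{R},Z)\sigma$ to a genuine synchronous Taylor series with endomorphism coefficients of the correct filtration order. The step that makes this work is the observation that $\mathcal{R}$-parallel transport preserves the global filtration $\textup{End}(E)^\bullet$, a direct consequence of \eqref{technicalcondotionforrescaledbundle3}; without this invariance the synchronous Taylor coefficients of $K$ would not be guaranteed to lie in $\textup{End}(E)^2$, and the desired conclusion would fail.
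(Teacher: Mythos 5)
Your proof is correct and follows the same strategy as the paper: apply the curvature identity to the synchronous section $\sigma$, use $[\mathcal{R},Y]=-Y$ (vertical) or $[\mathcal{R},X]=0$ (horizontal), and invert $(c+\nabla_\mathcal{R})$ term-by-term on Taylor series. You are actually more explicit than the paper on one point it glosses over, namely that \eqref{technicalcondotionforrescaledbundle2} and \eqref{technicalcondotionforrescaledbundle3} together guarantee the synchronous Taylor coefficients of $K(\mathcal{R},Y)$ lie in $\Gamma(\textup{End}(E)^2)$, so that the $\omega_I$'s indeed have order at most $2$.
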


\begin{proof}
For the first bullet point, without loss of generality, we may assume $Y=\partial_i$, where we have $[\mathcal{R},Y]=-Y$. Consider the equation
\[\nabla_{\mathcal{R}}\nabla_Y\sigma-\nabla_Y\nabla_{\mathcal{R}}\sigma -\nabla_{[\mathcal{R},Y]}\sigma=K(\mathcal{R},Y)\sigma.\]
Since $\nabla_{\mathcal{R}}\sigma=0$, we obtain
\begin{equation}\label{pretaylorexpansion}
    \nabla_{\mathcal{R}}\nabla_{Y}\sigma+\nabla_{Y}\sigma=K(\mathcal{R},Y)\sigma.
\end{equation}
Let $\sum_I\sigma_Iy^I$ be the Taylor expansion for $\nabla_{Y}\sigma$ and let 
\[\sum_{|I|>0}\omega_Iy^I\]
be the Taylor expansion of the endomorphism $K(\mathcal{R},Y)$, where $\omega_I$'s are endomorphisms of $E$ where $\nabla_{\mathcal{R}}\omega_I=0$. Now by writing the Taylor expansion of both sides of \eqref{pretaylorexpansion}, we obtain
\[\sum_{I}(1+|I|)\sigma_Iy^I=\sum_{|I|>0}\omega_I(\sigma)y^I\]
from which the statement follows.

The argument for the second bullet point is quite similar, except for the inequality $o^f(\sigma_0)\leq o^f(\sigma)$, where it follows from the compatibility of the connection with filtration of the restricted bundle $E|_M$.
\end{proof}

\begin{corollary}\label{c:generalderivativesynchronous}
If  $\sigma\in \Gamma(E)$ is an $\mathcal{R}$-synchronous section and $D$ is a differential operator  of Getzler order $k$. Then for the Taylor expansion 
\[D\sigma\sim \sum_{I}y^{I}\sigma_{I}\]
we have
\[o^{f}(\sigma_{I})\leq o^{f}(\sigma)+|I|+k.\]
So, in particular,
\[o^{f}(D\sigma)\leq o^{f}(\sigma)+k.\]
\end{corollary}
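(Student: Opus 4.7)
The second inequality is the $I=0$ specialization of the first, since $(D\sigma)|_M$ equals the leading Taylor coefficient $\sigma_0$. The plan is to prove the bound on every Taylor coefficient by a two-step reduction followed by induction on the number of covariant derivatives in $D$.

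\textbf{Step 1: reducing $\phi$ to the identity.} By linearity one may assume $D=\phi\,\nabla_{X_1}\cdots\nabla_{X_l}$ with $o^g(\phi)+l\le k$. Taylor-expand $\phi\sim\sum_J y^J\phi_J$ into $\mathcal{R}$-synchronous $\phi_J\in\Gamma(\textup{End}(E))$. Hypothesis \eqref{technicalcondotionforrescaledbundle3} that $\nabla^{\textup{End}(E)}$ preserves the Getzler filtration forces $o^g(\phi_J)\le o^g(\phi)$, and then compatibility \eqref{eq:comatibilityEnd} on $M$ gives $o^f(\phi_J(\tau))\le o^f(\tau)+o^g(\phi)$ for every $\mathcal{R}$-synchronous $\tau$. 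Consequently, if $\nabla_{X_1}\cdots\nabla_{X_l}\sigma\sim\sum_I y^I\tau_I$ with $o^f(\tau_I)\le o^f(\sigma)+l+|I|$, then $D\sigma\sim \sum_M y^M\!\bigl(\sum_{I+J=M}\phi_J(\tau_I)\bigr)$ and each summand has $o^f\le o^f(\sigma)+l+o^g(\phi)+|M|\le o^f(\sigma)+k+|M|$. Thus it suffices to treat $D=\nabla_{X_1}\cdots\nabla_{X_l}$, and moreover one may assume each $X_j$ is a coordinate vector field from \eqref{trivializingneighborhood}, by absorbing the smooth coefficients of a general $X_j$ into endomorphism factors of Getzler order $0$ via Leibniz (the commutators one picks up produce monomials of strictly smaller $l$ that are controlled by the induction).

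\textbf{Step 2: induction on $l$.} The base case $l=1$ is exactly Lemma \ref{l:normalderivativesynchronous}: for vertical $X_1$ the $y^K$-coefficient is $\omega_K(\sigma)$ with $|K|\ge 1$ and $o^g(\omega_K)\le 2\le 1+|K|$, so $o^f\le o^f(\sigma)+1+|K|$; for horizontal $X_1$ the leading coefficient $\sigma_0$ satisfies $o^f\le o^f(\sigma)$ and the higher ones are treated as in the vertical case. For the inductive step, write $D=\nabla_{X_1}\tilde D$ with $\tilde D=\nabla_{X_2}\cdots\nabla_{X_l}$ and apply the hypothesis to $\tilde D\sigma\sim\sum_I y^I\tau_I$, so $o^f(\tau_I)\le o^f(\sigma)+(l-1)+|I|$. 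Then
\[\nabla_{X_1}\!\bigl(y^I\tau_I\bigr)=X_1(y^I)\,\tau_I+y^I\,\nabla_{X_1}\tau_I.\]
The first summand reindexes the $\tau_I$ (it is nontrivial only when $X_1=\partial_{y^b}$, contributing $(J_b+1)\tau_{J+e_b}$ to the $y^J$-coefficient, with $o^f\le o^f(\sigma)+l+|J|$). The second summand is expanded via Lemma \ref{l:normalderivativesynchronous} applied to each $\mathcal{R}$-synchronous $\tau_I$: one gets a finite sum of contributions $\omega_K(\tau_I)$ with $|K|\ge 1$ (vertical case) or $(\tau_J)_0$ plus similar $\eta_K(\tau_I)$ terms (horizontal case), and in every instance the $y^J$-coefficient satisfies $o^f\le o^f(\sigma)+l+|J|$, closing the induction.

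\textbf{Main obstacle.} The tightest part of the bookkeeping is the vertical induction step: the order-$2$ jump coming from $o^g(\omega_K)\le 2$ in Lemma \ref{l:normalderivativesynchronous} must be absorbed jointly by the ``$+|K|\ge 1$'' and the ``$+1$'' one gains in passing from $l-1$ to $l$. Making sure these small shifts line up in all corner cases (e.g.\ $J=0$, where the curvature contribution vanishes and only the first summand survives, and $I=0$, where one has to invoke the horizontal leading-term bound) is where one must check that no slack is wasted; this is ultimately what forces the curvature order-$2$ bound \eqref{technicalcondotionforrescaledbundle2} to be exactly $2$ and no worse.
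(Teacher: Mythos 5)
Your proof is correct and follows essentially the same route as the paper: reduce to the case $D=\phi\,\nabla_{X_1}\cdots\nabla_{X_l}$, control the endomorphism factor via \eqref{technicalcondotionforrescaledbundle3} and \eqref{eq:comatibilityEnd}, and induct on the number of covariant derivatives using Lemma~\ref{l:normalderivativesynchronous} applied to the synchronous Taylor coefficients of $\tilde D\sigma$. The paper's proof is extremely terse, and your write-up supplies the bookkeeping it leaves implicit; the only cosmetic difference is that you further reduce to coordinate vector fields, which is harmless but not needed since Lemma~\ref{l:normalderivativesynchronous} is already stated for arbitrary vertical and horizontal fields.
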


\begin{proof}
If the $D$ is an endomorphism of the bundle $E\to V$, the statement is obvious.
Consider a vector field $X$ on $V$. We will prove the statement for $D=\nabla_X$, and by induction on the differential order, the full generality follows.  

We can write  $X=X^{\textup{h}}+X^{\textup{v}}$, where $X^{\textup{h}}$ and $X^{\textup{v}}$ are the horizontal and vertical components of $X$. Now from Lemma \ref{l:normalderivativesynchronous} the corollary follows.
\end{proof}

Recall that the Taylor order was defined in Definition~\ref{d:Taylororder}.
\begin{theorem}\label{t:taylororderequalscalingorder}
For a section $\sigma\in\Gamma(E)$ with support in the trivializing neighborhood \eqref{trivializingneighborhood}, we have
\[o^{sc}(\sigma)=o^{t}(\sigma).\]
\end{theorem}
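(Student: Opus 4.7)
The plan is to prove the two inequalities $o^{sc}(\sigma) \geq o^t(\sigma)$ and $o^{sc}(\sigma) \leq o^t(\sigma)$ separately. Both directions proceed by analyzing how differential operators act on the truncated Taylor expansion $\sigma \sim \sum_{|I| < N} \sigma_I y^I$: the tail vanishes to order $N$ along $M$ and therefore contributes nothing to the restriction of $D\sigma$ to $M$ for any $D$ of Getzler order strictly less than $N$. The central technical input throughout is Corollary~\ref{c:generalderivativesynchronous}, which bounds the filtration order of derivatives of an $\mathcal{R}$-synchronous section in terms of its own filtration and the Getzler order of the operator.

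For $o^{sc}(\sigma) \geq o^t(\sigma)$, I fix an arbitrary $D$ with $o^g(D) = k$ and show $o^f(D\sigma) \leq k - o^t(\sigma)$. The Leibniz rule expands $D(\sigma_J y^J)$ as a sum of terms in which some of the vector field factors of $D$ act on the polynomial $y^J$ and the remaining ones act covariantly on $\sigma_J$. For any such term to survive restriction to $M$, the factors acting on $y^J$ must together account for all $|J|$ powers of $y$, leaving a residual operator of Getzler order at most $k - |J|$ acting on the $\mathcal{R}$-synchronous section $\sigma_J$. Corollary~\ref{c:generalderivativesynchronous} then bounds the filtration of the resulting restriction by $o^f(\sigma_J) + k - |J|$. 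Summing over $J$ gives $o^f(D\sigma) \leq \max_J\{o^f(\sigma_J) + k - |J|\} = k - o^t(\sigma)$, hence $o^g(D) - o^f(D\sigma) \geq o^t(\sigma)$ for every $D$, as required.

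For $o^{sc}(\sigma) \leq o^t(\sigma)$, I choose a multi-index $I^*$ achieving $o^t(\sigma) = |I^*| - o^f(\sigma_{I^*})$, taken of minimal total degree $|I^*|$ among all such minimizers. Define $D^* := \nabla_{\partial_{y^{j_1}}} \cdots \nabla_{\partial_{y^{j_{|I^*|}}}}$, an ordered product of $|I^*|$ vertical covariant derivatives whose multi-set of indices realizes $I^*$. Each factor has Getzler order at most $1$, so $o^g(D^*) \leq |I^*|$. Expanding $(D^*\sigma)|_M$ via Leibniz, only terms coming from $\sigma_J y^J$ with $J \leq I^*$ componentwise can survive restriction. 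For $J = I^*$ this contribution is precisely $(I^*)!\,\sigma_{I^*}|_M$, whose image in $F^{o^f(\sigma_{I^*})}/F^{o^f(\sigma_{I^*}) - 1}$ is nonzero by the definition of $o^f$. For each $J \leq I^*$ with $J \neq I^*$, the minimality of $|I^*|$ forces $|J| - o^f(\sigma_J) > o^t(\sigma)$, so Corollary~\ref{c:generalderivativesynchronous} places the corresponding contribution in filtration $o^f(\sigma_J) + |I^*| - |J| \leq o^f(\sigma_{I^*}) - 1$. No such lower-$|J|$ contribution can cancel the leading term, whence $o^f(D^*\sigma) = o^f(\sigma_{I^*})$ and $o^g(D^*) - o^f(D^*\sigma) \leq |I^*| - o^f(\sigma_{I^*}) = o^t(\sigma)$.

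The main obstacle is precisely this non-cancellation required in the $\leq$ direction: a priori, several distinct $J \leq I^*$ could simultaneously realize the minimum of $o^t$, in which case their contributions to $(D^*\sigma)|_M$ would all lie in the same filtration level $F^{o^f(\sigma_{I^*})}$ and might conspire to kill the desired leading piece in the associated graded. The resolution is the extra choice of $I^*$ as a minimizer of smallest total degree, which forces every $J \leq I^*$ with $J \neq I^*$ to be a strict non-minimizer and hence to drop into strictly lower filtration. A more routine secondary point is the truncation of the Taylor series at some order $N > |I^*|$, so that the tail cannot spoil the restriction analysis.
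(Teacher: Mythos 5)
Your proposal is correct and follows essentially the same strategy as the paper's proof: both directions hinge on Corollary~\ref{c:generalderivativesynchronous}, and the $\le$ direction uses the same vertical operator $D^* = \nabla^{\alpha_1}_{\partial_{y^1}}\cdots\nabla^{\alpha_n}_{\partial_{y^n}}$ together with the choice of $I^*$ of minimal total degree to rule out cancellation at the top filtration level. The only cosmetic differences are that you sort the surviving Leibniz terms by componentwise comparison $J\le I^*$ whereas the paper sorts by $|J|\ge|I|$ versus $|J|<|I|$, and that you spell out the Leibniz bookkeeping that the paper leaves implicit when applying the corollary to a non-synchronous $\sigma$; neither changes the substance of the argument.
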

\begin{proof}
Choose $I=(\alpha_1,\cdots,\alpha_k)$ with smallest $|I|$ such that 
\[o^{t}(\sigma)=|\alpha|-o^{f}(\sigma_{I}),\]
and define the differential operator $D=\nabla^{\alpha_1}_{\partial_{y^1}}\cdots\nabla^{\alpha_{n}}_{\partial_{y^n}}$. 
We claim that
\[o^{f}(D\sigma)=o^{f}(\sigma_{\alpha}).\]
For any $J\neq I$ with $|J|\geq|I|$ we have
\[D(y^{J}\sigma_{J})|_{M}=0.\]
If $|J|<|I|$, by the choice of $I$, we have
\[|J|-o^{f}(\sigma_{J})>|I|-o^{f}(\sigma_{I}).\]
Also, by corollary \ref{c:generalderivativesynchronous}, we have
\[o^{f}(Dy^{J}\sigma_{J})\leq o^{f}(\sigma_{J})+|I|-|J|\]
and combining this with the previous inequality gives
\[o^{f}(Dy^{J}\sigma_{J})<o^{f}(\sigma_{I})\]
from which our claim follows. Therefore
\[o^{sc}(\sigma)\leq o^{t}(\sigma).\]
Now we prove the opposite inequality. 
From Corollary \ref{c:generalderivativesynchronous}, we have
\[o^t(D\sigma)\geq o^t(\sigma)-o^f(D).\]
By definition, we have
\[o^t(D\sigma)\leq -o^{f}(D\sigma),\]
and consequently
\[o^{sc}(\sigma)\geq o^{t}(\sigma).\]
\end{proof}

\subsection{Rescaled module}
We are ready to  define a module over the algebra $\mathcal{A}(V,M)$, which will be later shown to be a module of sections of a certain vector bundle -- the rescaled bundle -- over the deformation space. 

The space of Laurent polynomials $\Gamma(V,E)[t,t^{-1}]$ is a module over the algebra $\mathcal{A}(V,M)$ and, hence, can be viewed as a sheaf over the deformation space $\DNC(V,M)$. We are interested in the following submodule of $\Gamma(V,E)[t,t^{-1}]$:

\begin{definition}\label{def:Rescaledmodule}
The subspace $\mathcal{S}(E,\nabla)\subset \Gamma(V,E)[t,t^{-1}]$ of the space of Laurent polynomials, consisting of the sections of the form
\[s=\sum_ps_pt^{-p}, \qquad\text{with}\quad o^{sc}(s_p)\geq p, 
\]
is called the {\em rescaled module}.
\end{definition}

The rescaled module is also a sheaf over $\DNC(V,M)$. In order to show that  $\mathcal{S}(E,\nabla)$  is isomorphic to a subspace of a space of section of a smooth bundle over $\DNC(V,M)$, we need to study the restriction  
\[\mathcal{S}_0(E,\nabla):=\mathcal{S}(E,\nabla)/t\mathcal{S}(E,\nabla)\]
of $\mathcal{S}(E,\nabla)$  to the zero fiber of $\DNC(V,M)$. We denote by $\mathcal{S}_{0,m}(E,\nabla)$ the quotient of $\mathcal{S}_0(E,\nabla)$ by the ideal of functions vanishing at $m\in M$.

Let $\mathcal{P(N)}\subset \Gamma(\mathcal{N},\bigoplus_{p=1}^qF^{p}/F^{p-1})$ denote the space of sections whose restriction to each fiber $\mathcal{N}_m$ of $\mathcal{N}$ are vector-valued polynomial functions 
     $\mathcal{N}_m\to \bigoplus_{p=1}^qF_m^{p}/F_m^{p-1}$.

\begin{lemma}
     Let $I_p(E)\subset\Gamma(V,E)$ denote the submodule of sections with scaling order at least $p$. There is a canonical isomorphism
     \[\mathcal{S}_0(E,\nabla)\buildrel\simeq\over\longrightarrow\bigoplus_pI_p(E)/I_{p+1}(E),\qquad\sum_ps_pt^{-p}\mapsto \sum_{p\geq 0}\langle s_p\rangle_p.\]
     Further, the map 
     \[\sum_ps_pt^{-p}\mapsto \big[\,X_m\mapsto\sum_p\frac{1}{p!}\langle\nabla_X^ps_p|_m\rangle_p\,\big],\]
     where $\langle.\rangle_p$ denotes the corresponding class in the quotient space $F_m^p/F_m^{p-1}$,
     defines an isomorphism between  $\mathcal{S}_{0,m}(E,\nabla)$  and the space 
     of vector-valued polynomial functions 
     $\mathcal{P(N)}$. 
     
     \qed
     
\end{lemma}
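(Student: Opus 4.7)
The plan is to treat the two assertions in turn, each by mimicking the corresponding fact for the algebra $\mathcal{A}_0(V,M)$ proved earlier in Subsection~\ref{ss:deformationtothenormalcone}. For the first assertion, the candidate map $\phi:\mathcal{S}(E,\nabla)\to\bigoplus_p I_p(E)/I_{p+1}(E)$ defined by $\sum_p s_pt^{-p}\mapsto\sum_p\langle s_p\rangle_p$ makes sense because $o^{sc}(s_p)\geq p$ puts $s_p$ in $I_p(E)$. After the reindexing $t\cdot\sum_p s_pt^{-p}=\sum_q s_{q+1}t^{-q}$, the $t^{-q}$-coefficient has scaling order at least $q+1$ and therefore maps to zero in $I_q/I_{q+1}$; hence $\phi$ descends to $\mathcal{S}_0(E,\nabla)$. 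Surjectivity is clear by lifting coset representatives, and injectivity follows because, if every $s_p$ lies in $I_{p+1}(E)$, then $\sum_p s_pt^{-p}=t\cdot\sum_p s_pt^{-p-1}\in t\mathcal{S}(E,\nabla)$.

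For the second assertion, I would mimic the identity $\varepsilon_{X_m}=\varepsilon_m\circ\exp(tX)$ derived in Subsection~\ref{ss:alternativeXm}. Since $\nabla_X$ has Getzler order at most one, the operator $t\nabla_X$ preserves $\mathcal{S}(E,\nabla)$, descends to $\mathcal{S}_0(E,\nabla)$, and is locally nilpotent there by the same grading argument used to show nilpotence of $tX$ on $\mathcal{A}_0(V,M)$. Consequently $\exp(t\nabla_X)$ is a well-defined finite sum on $\mathcal{S}_0(E,\nabla)$, and the map of the lemma is realised as the composite $\varepsilon_m^{E}\circ\exp(t\nabla_X)$, where $\varepsilon_m^{E}$ is the fibrewise evaluation at $m$ into the associated graded $\bigoplus_p F_m^p/F_m^{p-1}$. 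Expanding this composite yields precisely the stated formula $X_m\mapsto\sum_p\tfrac{1}{p!}\langle\nabla_X^p s_p|_m\rangle_p$.

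The main obstacle is the well-definedness of this composite: one must verify both that the outcome is independent of the vector field chosen to represent $X_m\in\mathcal{N}_m$ and that each class $\langle\nabla_X^p s_p|_m\rangle_p$ genuinely lies in $F_m^p/F_m^{p-1}$. Any two extensions of $X_m$ differ by a vector field $Y$ with $Y|_m\in T_mM$, which we may arrange to be tangent to $M$ in a neighbourhood of $m$. The compatibility \eqref{technicalcondotionforrescaledbundle1} then ensures that $\nabla_Y$ preserves the filtration $F^\bullet$, while the curvature bound \eqref{technicalcondotionforrescaledbundle2} and the condition \eqref{technicalcondotionforrescaledbundle3} on $\nabla^{\textup{End}(E)}$ control the commutators produced by expanding $(\nabla_X+\nabla_Y)^p$, so that every term involving a factor of $\nabla_Y$ contributes only at strictly lower filtration at $m$. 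The filtration bound on $\langle\nabla_X^p s_p|_m\rangle_p$ itself is then immediate from the definitions of the Getzler and scaling orders applied to $D=\nabla_X^p$.

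To conclude bijectivity, I would pass to tubular-neighbourhood coordinates as in Subsection~\ref{ss:Scalingorder} and invoke Theorem~\ref{t:taylororderequalscalingorder}. The $\mathcal{R}$-synchronous Taylor expansion $s_p=\sum_I\sigma_I y^I$ with $o^f(\sigma_I)\leq|I|-p$ reduces the composite $\varepsilon_m^{E}\circ\exp(t\nabla_X)$ to the assignment $X_m\mapsto\sum_I\langle\sigma_I|_m\rangle_{|I|}X_m^I$, whose image is visibly a polynomial function on $\mathcal{N}_m$ valued in $\bigoplus_p F_m^p/F_m^{p-1}$. Surjectivity is then obtained by choosing $\mathcal{R}$-synchronous $\sigma_I$ to realise any prescribed polynomial, and injectivity from the fact that distinct families of Taylor coefficients produce distinct polynomials.
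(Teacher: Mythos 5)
The paper itself gives no proof of this lemma (it is stated with a \qed), so the only comparison possible is with the machinery the paper sets up around it; your proposal reconstructs exactly that route --- the first isomorphism by the direct filtration argument, and the second via the composite $\varepsilon_m\circ\exp(t\nabla_X)$ together with the equality of Taylor and scaling orders --- precisely as the paper's subsequent definition of $\varepsilon_{X_m}$ confirms. So the approach is the right one, and the first half of your argument is clean and correct.

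One genuine slip in the second half: you write that the difference $Y$ of two extensions of $X_m$ satisfies $Y(m)\in T_mM$, ``which we may arrange to be tangent to $M$ in a neighbourhood of $m$.'' You cannot arrange $Y$, since it is determined by the two given extensions. The repair is standard: decompose $Y=Y_{\mathrm{tan}}+Y_0$ where $Y_{\mathrm{tan}}$ is tangent to $M$ near $m$ with $Y_{\mathrm{tan}}(m)=Y(m)$, and $Y_0$ vanishes at $m$. The $Y_0$-contributions drop out when evaluating at $m$ by a Leibniz-rule argument (each occurrence of $\nabla_{Y_0}$ either lands on a factor vanishing at $m$ or costs a derivative that, by the scaling-order bound $o^{sc}(s_p)\geq p$, pushes the restriction into lower filtration), and the $Y_{\mathrm{tan}}$-part is handled by \eqref{technicalcondotionforrescaledbundle1}--\eqref{technicalcondotionforrescaledbundle3} as you describe. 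Relatedly, the claim that expanding $\varepsilon_m\circ\exp(t\nabla_X)$ ``yields precisely'' the lemma's displayed formula glosses over a re-indexing: the expansion produces, for each degree $q$, a sum $\sum_{p\geq q}\tfrac1{(p-q)!}\langle\nabla_X^{p-q}s_p(m)\rangle_q$, and one must use $o^{sc}(s_p)\geq p$ to see that only the expected terms survive. Neither point undermines the strategy, but both need to be spelled out for the proof to close.
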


\subsection{The construction of the rescaled bundle}
We now give a rundown of how to obtain a smooth vector bundle for which a ``big" subspace of the space of sections is isomorphic to $\mathcal{S}(E,\nabla)$.

As a set, this bundle is given by the union
\[\mathbb{E}= \Big(\bigoplus_{p=1}^qF^{p}/F^{p-1}\times\{0\}\Big)\bigsqcup_{t\neq 0}\big(E\times\{t\}\big).\]

To identify the elements of  $\mathcal{S}(E,\nabla)$ with sections of $\mathbb{E}$, we define for each element $\mu\in\DNC(V,M)$ an evaluation map $\varepsilon_\mu:\mathcal{S}(E,\nabla)\to \mathbb{E}$. Then the fiber $\mathbb{E}_{\mu}$ of $\mathbb{E}$ over $\mu$ is given by
\[\mathbb{E}_{\mu} =
\left\{
	\begin{array}{ll}
		E_{v}  & \mbox{if}\quad \mu=(v,t)\in V\times\bR^{\neq 0}, \\
		 \bigoplus_{p=1}^qF_m^{p}/F_m^{p-1}\qquad & \mbox{if}\quad \mu=(X_m,0)\in \mathcal{N}\times\{0\}.
	\end{array}
\right.\]

For $(v,\lambda)$, $v\in V$, $\lambda\neq0$, the evaluation  map is simply
\[\varepsilon_{(v,\lambda)}:\mathcal{S}(E,\nabla)\to E_v\]
\[\sum_ps_pt^{-p}\mapsto \sum_ps_p(v)\lambda^{-p}.\]
This map will be the evaluation map on the nonzero fiber element $(v,t)\in \textup{DNC}(V,M)$.

To define the evaluation over the zero fiber, we need a bit more work. 
First, note that for a vector field  $X\in\mathfrak{X}(V)$
we have a well-defined map 
\[t\nabla_X:\mathcal{S}(E,\nabla)\to \mathcal{S}(E,\nabla).\]
This map descends to the quotient space $\mathcal{S}_0(E,\nabla)$ as a locally nilpotent map:
\[t\nabla_X:\mathcal{S}_0(E,\nabla)\to \mathcal{S}_0(E,\nabla),\]
cf. Subsection~\ref{ss:alternativeXm}.
Hence we have a well-defined module homomorphism 
\[\exp(t\nabla_X):\mathcal{S}_0(E,\nabla)\to \mathcal{S}_0(E,\nabla).\]

 For $m\in M$ define the map
\[\varepsilon_{m}:\mathcal{S}(E,\nabla)\to \bigoplus_{p=1}^qF_m^{p}/F_m^{p-1}\]
\[\sum_ps_pt^{-p}\mapsto \sum_{p\leq 0}\langle s_p(m)\rangle_p.\]
And finally, for $X_m\in \mathcal{N}$
 define the map
\[\varepsilon_{X_m}:\mathcal{S}_0(E,\nabla)\to \bigoplus_{p=1}^qF_m^{p}/F_m^{p-1},\]
\[\varepsilon_{X_m}s=\varepsilon_m\big(\exp(t\nabla_X)s\big),\]
where $X$ is an arbitrary extension of $X_m$. The map $\varepsilon_{X_m}$ is the evaluation map over the zero fiber.

\subsection{The smooth structure on $\mathbb{E}$}
We now define a smooth structure on $\mathbb{E}$ by constructing a locally free sheaf on $\DNC(M,V)$ generated by  $\mathcal{S}(E,\nabla)$. This sheaf is the sheaf of smooth functions on $\mathbb{E}$.

\begin{definition}
Define the sheaf $\mathcal{E}$ on $\DNC(V,M)$ that consists of maps
\[\DNC(V,M)\to \Pi_{\mu\in \DNC(V,M)}\mathbb{E}_{\mu},\]
\[\mu\mapsto \tau(\mu)\]
such that for some $f_i\in C^{\infty}(\DNC(V,M))$ and $s_i\in \mathcal{S}(E,\nabla)$
\[\tau(\mu)=\sum_{i=1}^Nf_i(\mu)\varepsilon_{\mu}({s}_i)\]
for every $\mu\in \DNC(V,M)$.
\end{definition}
The elements of $\mathcal{E}$ are naturally identified with sections over $\mathbb{E}$.

\begin{theorem}\label{locallyfreesheafconstantrank}
The sheaf $\mathcal{E}$ is a locally free sheaf  of constant rank of modules over algebra $C^{\infty}(\DNC(V,M))$. Its rank is equal to the rank of the vector bundle $E\to V$. The evaluation map of the previous subsections defines a smooth structure on $\mathbb{E}$ and  identifies $\mathcal{E}$ with the space of smooth sections of $\mathbb{E}$.  This gives $\mathbb{E}$ the structure of a smooth bundle over $\DNC(V,M)$.  Fiber-wise $\mathbb{E}$ is given as follows
\[\begin{tikzcd}
\pi^*\textup{gr}(F)  \arrow[d, ]
&& E \arrow[d, ] \\
\mathcal{N}\times\{0\} &\bigsqcup_{t\neq 0}
& V\times\{t\}.
\end{tikzcd}\]
\end{theorem}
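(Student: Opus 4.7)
The plan is to exhibit an explicit local generating system for $\mathcal{E}$ obtained from a filtration-adapted synchronous frame of $E$, verify that its images in every fiber $\mathbb{E}_\mu$ form a basis, and then read off local freeness, constant rank, and the smooth structure.

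Fix $m\in M$ and, via Theorem~\ref{tubularnghbdequivalenteulerlikevectorfield}, a tubular neighborhood $U\subset V$ of $m$ with Euler-like vector field $\mathcal{R}$ and coordinates $(x,y)$ as in \eqref{trivializingneighborhood}. Denote by $\widetilde{U}\subset\DNC(V,M)$ the open subset whose $t\neq 0$ part is $U\times\bR^{\neq 0}$ and whose $t=0$ part is $\mathcal{N}|_{U\cap M}$. Using the filtration compatibility of $i^*\nabla$ in \eqref{technicalcondotionforrescaledbundle1}, I pick a filtration-adapted basis of $F$ at $m$, parallel-transport it along $M$ (this preserves each $F^j$), and then extend $\mathcal{R}$-synchronously off $M$. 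The result is an $\mathcal{R}$-synchronous frame $e_1,\ldots,e_r$ of $E|_U$ such that $p_\alpha:=o^f(e_\alpha)$ satisfies $\#\{\alpha:p_\alpha=p\}=\dim(F_m^p/F_m^{p-1})$, and $\{\langle e_\alpha|_M\rangle_{p_\alpha}\}$ is a local frame of $\mathrm{gr}(F)$ near $m$.

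Set $\tau_\alpha:=t^{p_\alpha}e_\alpha$. Corollary~\ref{c:generalderivativesynchronous} yields $o^f(De_\alpha)\leq p_\alpha+o^g(D)$, so $o^{sc}(e_\alpha)\geq -p_\alpha$ and thus $\tau_\alpha\in\mathcal{S}(E,\nabla)$. To show $\{\tau_\alpha\}$ locally generate $\mathcal{S}(E,\nabla)$ over $\mathcal{A}(V,M)$, take any $s=\sum_p s_p t^{-p}\in\mathcal{S}(E,\nabla)$ supported in $U$ and expand $s_p=\sum_\alpha f_{p,\alpha}e_\alpha$; writing $f_{p,\alpha}=\sum_I f_{p,\alpha,I}(x)\,y^I$, the synchronous Taylor pieces of $s_p$ are $\sigma_{p,I}=\sum_\alpha f_{p,\alpha,I}(x)\,e_\alpha$. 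Theorem~\ref{t:taylororderequalscalingorder} then converts $o^{sc}(s_p)\geq p$ into $|I|-o^f(\sigma_{p,I})\geq p$ for all $I$, forcing $f_{p,\alpha,I}\equiv 0$ whenever $|I|<p+p_\alpha$. Hence each $f_{p,\alpha}$ vanishes to order at least $p+p_\alpha$ along $M$, so $h_\alpha:=\sum_p f_{p,\alpha}\,t^{-(p+p_\alpha)}\in\mathcal{A}(V,M)$ and $s=\sum_\alpha h_\alpha\tau_\alpha$.

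Finally I verify that $\{\varepsilon_\mu(\tau_\alpha)\}$ is a basis of $\mathbb{E}_\mu$ for every $\mu\in\widetilde{U}$. For $\mu=(v,\lambda)$ with $\lambda\neq 0$, $\varepsilon_\mu(\tau_\alpha)=\lambda^{p_\alpha}e_\alpha(v)$ is manifestly a basis of $E_v$. For $\mu=(X_m,0)$ and an extension $X$ of $X_m$,
\[
\varepsilon_{X_m}(\tau_\alpha)=\varepsilon_m\bigl(\exp(t\nabla_X)(t^{p_\alpha}e_\alpha)\bigr)=\sum_{j\geq 0}\tfrac{1}{j!}\bigl\langle\nabla_X^j e_\alpha|_m\bigr\rangle_{p_\alpha+j},
\]
which lies in $\bigoplus_{p\geq p_\alpha}F_m^p/F_m^{p-1}$ with leading component $\langle e_\alpha|_m\rangle_{p_\alpha}$, since Corollary~\ref{c:generalderivativesynchronous} gives $o^f(\nabla_X^j e_\alpha)\leq p_\alpha+j$. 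Given a relation $\sum_\alpha c_\alpha\varepsilon_{X_m}(\tau_\alpha)=0$, induction on $p$ from the bottom of the filtration, reading off the $F_m^p/F_m^{p-1}$ component and applying the basis property of $\{\langle e_\alpha|_m\rangle_p\}$, kills the $c_\alpha$ level by level; since $r=\dim\mathrm{gr}(F)_m$, the images form a basis of $\mathbb{E}_{(X_m,0)}$. Combining these steps, $\mathcal{E}|_{\widetilde{U}}$ is the free $C^\infty(\widetilde{U})$-module on $\tau_1,\ldots,\tau_r$, so $\mathcal{E}$ is locally free of rank $\mathrm{rank}(E)$; the evaluation maps transport smoothness from $\widetilde{U}$ to $\mathbb{E}$, making $\mathbb{E}$ a smooth vector bundle with the claimed fibers and identifying $\mathcal{E}$ with its sheaf of smooth sections. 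The main obstacle is the zero-fiber computation above: one must track carefully how the Getzler bookkeeping forces $\nabla_X^j e_\alpha|_m\in F_m^{p_\alpha+j}$ so that the level-by-level induction separates coefficients; once that is in hand the remaining assertions are routine.
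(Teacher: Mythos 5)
Your proof follows essentially the same route as the paper: both construct the local frame $\{\tau_\alpha = t^{p_\alpha}e_\alpha\}$ from a filtration-adapted $\mathcal{R}$-synchronous frame of $E$ and show via the $\exp(t\nabla_X)$ evaluation that the leading term $\langle e_\alpha\rangle_{p_\alpha}$ gives a basis of the zero fiber. The only difference is that you supply details the paper leaves implicit — the Taylor-order argument showing $\{\tau_\alpha\}$ actually generate $\mathcal{S}(E,\nabla)$ over $\mathcal{A}(V,M)$, and the level-by-level induction for linear independence — whereas the paper asserts local generation without proof and adds a separate (short) case for $\mu = (v,t)$ with $v$ outside $M$, which your tubular-neighborhood frame in fact already covers for nearby $v$; points far from $M$ are handled as the paper does, trivially by cut-off sections.
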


\begin{proof}
In some neighborhood of every $\mu \in \DNC(V,M)$, we show the sheaf is locally free of constant rank. 
Fix a trivializing neighborhood \eqref{trivializingneighborhood} around $m\in M$ and consider the associated Euler-like vector field $\mathcal{R}$.
Take a local frame $\{e_i\}_{i=1}^{\textup{rank}(E)}$ for the restricted bundle $E|_M\to M$ such that each $e_i$ gives a local homogeneous section of degree $q_i=o^f(e_i)$ of the associated graded bundle
\[\textup{gr}(F):=\bigoplus_{p=1}^qF^p/F^{p-1}.\]
There are unique local $\mathcal{R}$-synchronous sections $\tilde{e}_i$ of $E$, so that $\tilde{e}_i|_M=e_i$. We show the set $\{\tilde{e}_it^{q_i}\}_{i=1}^{\textup{rank}(E)}$ generate the sheaf locally. Clearly in a neighborhood $\mu=(m,t)\in \DNC(V,M)$ these sections generate the sheaf. For $\mu=X_m\in\mathcal{N}_m$, we have
\begin{align*}
    \varepsilon_{X_m}(\tilde{e}_it^{q_i})&=\varepsilon_m\exp(t\nabla_X)\tilde{e}_it^{q_i}\\
    &=\varepsilon_m\sum_k\frac{t^{q_i+k}}{k!}\nabla_X^k\tilde{e}_i\\
    &=\langle e_i\rangle_{q_i}+\textup{higher degree terms}.
\end{align*}
Hence the sheaf is locally free of constant rank equal $\textup{rank}(E)$.

Now it remains to show the same for $\mu=(v,t)\in \DNC(V,M)$, with $v\notin M$. We may take a family of sections $\{s_i\}_i$ of $E$ forming a frame near $v$ and with supports away from $M$. In this case, the sections $s_i$ have scaling order $-\infty.$ So the set $\{s_i\}_i$  generates the sheaf  near $(v,t)$.
\end{proof}
\begin{definition}
The smooth vector bundle  $\mathbb{E}\to \DNC(V,M)$ is called the \emph{ rescaled bundle}.
\end{definition}

\subsection{Example: The spinor rescaled bundle}\label{regularrescaledbundle}
 The spinor rescaled bundle of Higson and Yi, \cite{Higson2019SpinorsAT},  was the main inspiration for this paper. We explain their setup briefly here.

    Assume $V=M\times M$ and $M\hookrightarrow M\times M$ is the diagonal embedding. If $\mathscr{E}\to M$ is a Clifford module with a Clifford connection $\nabla$, then $E=\mathscr{E}\boxtimes\mathscr{E}^*$ carries the connection $\nabla^{E}=\nabla\boxtimes 1+1\boxtimes\nabla$. Note that $E|_M$ is isomorphic to 
    
\begin{equation}\label{eq:cliffordcase}
    \mathbb{C}\textup{l}(TM)\otimes \textup{End}_{\textup{Cl}}(\mathscr{E})
\end{equation}
and it carries the Clifford filtration, which is compatible with the connection $\nabla^E$. Thus, we are in the situation of Subsection~\ref{ss:Scalingorder}, where the filtration is given by the Clifford filtration on the first factor of \eqref{eq:cliffordcase} (see Section~\ref{s:equivariantrescaledbundles} for more details on different filtrations in this case).

The curvature of this connection satisfies
    \[K^{E}=c\circ q\circ\gamma(R)\boxtimes1+1\boxtimes c^*\circ q\circ\gamma(R)+{F}^{\mathscr{E}/S}\boxtimes 1+1\boxtimes {F}^{\mathscr{E}^*/S}. \]
Here $R$ is the curvature of the Levi-Civita connection,  $\gamma:\mathfrak{so}(\mathscr{E})\xrightarrow[]{\simeq}\Lambda^2\mathscr{E}$ is the canonical isomorphism \cite[Section 2.2.10]{meinrenken2013clifford},  $q:\Lambda^*TM\to \textup{Cl}(TM)$
is the quantization \cite[Proposition 3.5]{berline1992heat} map, $c:\bC\textup{l}(TM)\to \textup{End}(\mathscr{E})$ is the Clifford action and $c^*:\mathbb{C}\textup{l}(TM)\to\textup{End}(\mathscr{E}^*)$ is the dual action. Thus the first summand in the above formula has Clifford filtration order 2, and, hence, when viewed as a differential operator, has Getzler filtration order 2.   Also, ${F}^{\mathscr{E}/S}$ and ${F}^{\mathscr{E}^*/S}$ are twisting curvatures of the Clifford module $\mathscr{E}$ and $\mathscr{E}^*$, respectively, cf. \cite[Prop.~3.43]{berline1992heat}. Recall that the twisting curvatures  commute with Clifford actions on $\mathscr{E}$ and $\mathscr{E}^*$. So, as a differential operator of order $0$, it has Getzler filtration order 0. We conclude that $K^E$ has Getzler order $2$. Therefore we obtain the rescaled bundle 

\[\mathbb{E}\to \DNC(M\times M,M)=\mathbb{T}M.\]
This rescaled bundle recovers the Getzler symbol calculus, \cite{Higson2019SpinorsAT}. This construction also recovers the local index formula, c.f \cite{Ludewig2020ASP,sadegh2021local}

The next section is dedicated to a generalization of the spinor rescaled bundle to the equivariant setting, from which we recover the Kirillov formula for the equivariant index.

\section{Equivariant Rescaled Bundles}\label{s:equivariantrescaledbundles}
Let $G$ be a compact Lie group acting on an oriented smooth manifold $M$
by orientation-preserving isometries. Consider a $\mathbb{Z}/2$-graded Clifford module $\mathscr{E}\to M$, which carries an even action of $G$. Let  $\nabla^{\mathscr{E}}$ be a $G$-equivariant connection on $\mathscr{E}$.

\subsection{The equivariant version of $\mathscr{E}\boxtimes\mathscr{E}^*$}\label{ss:equivariantbundle}

As equivariant version of the bundle $\mathscr{E}\boxtimes\mathscr{E}^*$ of subsection~\ref{regularrescaledbundle}, one would like to consider the bundle $\mathscr{E}\boxtimes\mathscr{E}^*\otimes\mathbb{C}[\mathfrak{g}]$. However, since this bundle has infinite dimension we prefer to work with its approximation 
\[
    E=\mathscr{E}\boxtimes\mathscr{E}^*\otimes\mathbb{C}[\mathfrak{g}]_{(J)},
\]
where $\mathbb{C}[\mathfrak{g}]_{(J)}$ is defined in subsection~\ref{ss:polynomialalgebra}.
This bundle is endowed with the natural connection  
\[\nabla:=\nabla^{\mathscr{E}}\boxtimes 1\otimes1+1\boxtimes\nabla^{\mathscr{E}^*}\otimes1+1\boxtimes1\otimes d\]
where $d$ is the trivial connection on the trivial bundle $\mathbb{C}[\mathfrak{g}]_{(J)}\times M\to M.$ The restricted bundle $F=E|_M$ is isomorphic to 
\[F\simeq\mathbb{C}\textup{l}(TM)\otimes \textup{End}_{\textup{Cl}}(\mathscr{E})\otimes\mathbb{C}[\mathfrak{g}]_{(J)}.\]
It carries the filtration $F^1\subset F^2\subset \cdots \subset F^q$ defined by
\[F^p:=\bigcup_{r\leq p/2}\mathbb{C}\textup{l}^{p-2r}(TM)\otimes \textup{End}_{\textup{Cl}}(\mathscr{E})\otimes\mathbb{C}[\mathfrak{g}]^{r}_{(J)}.\]
The reason for this choice of filtration is that its associated graded space is isomorphic to
\[\textup{gr}(F)\simeq \Lambda(T^*M)\otimes \textup{End}_{\textup{Cl}}(\mathscr{E})\otimes\mathbb{C}[\mathfrak{g}]_{(J)}\]
with the grading as in \eqref{equivariantgradingofexterioralgebra}.

The bundle of endomorphisms $\textup{End}(E)\simeq \textup{End}(\mathscr{E})\boxtimes\textup{End}(\mathscr{E}^*)\otimes\textup{End}(\mathbb{C}[\mathfrak{g}]_{(J)})$ is filtered as follows
\[\textup{End}(E)^p=\bigcup_{r+s+2u=p}\textup{End}(\mathscr{E})^{r}\boxtimes\textup{End}(\mathscr{E}^*)^s\otimes\textup{End}(\mathbb{C}[\mathfrak{g}]_{(J)})^{u};\]
here we used the Clifford filtrations on $\textup{End}(\mathscr{E})$ and $\textup{End}(\mathscr{E}^*)$, and the filtration on $\textup{End}(\mathbb{C}[\mathfrak{g}]_{(J)})$ is induced from the filtration on $\mathbb{C}[\mathfrak{g}]_{(J)}.$

The connection $\nabla$ clearly satisfies conditions \eqref{technicalcondotionforrescaledbundle1} and \eqref{technicalcondotionforrescaledbundle2}. Hence, by Theorem \ref{locallyfreesheafconstantrank}, we obtain a rescaled bundle 

\[\mathbb{E}_J\to \mathbb{T}M.\]

\subsection{Operators on the rescaled bundles} 

Let $D$ be a differential operator of Getzler order $p$ (cf. Definition~\ref{def:Getzlerorder}) acting on $E\to M\times M$. Then we have the induced map

\[t^pD:\mathcal{S}(E,\nabla)\to\mathcal{S}(E,\nabla)\]
on the rescaled modules, which gives a smooth differential operator on the rescaled bundle:
\[\boldsymbol{D}\act\Gamma(\mathbb{E}_J).\]

\begin{definition}\label{getzlersymbold} We define the {\em Getzler symbol of $D$}, denoted by $\sigma^g(D)$, as the restriction of the operator $\boldsymbol{D}$ to the $t=0$ fiber of the tangent groupoid:
\[\sigma^g(D)\act \Gamma(TM, \pi^*\Lambda(T^*M)\otimes \textup{End}_{\textup{Cl}}(\mathscr{E})\otimes\mathbb{C}[\mathfrak{g}]_{(J)})\]
where $\pi:TM\to M$.
\end{definition}

\begin{example}\label{examplesofoperatorsonrescaledbundle}\phantom{}
\begin{itemize}

    \item For a vector field $\xi\in\mathfrak{X}(M)$, consider the differential operator $D=\nabla^{\mathscr{E}}_{\xi}$ acting on $\Gamma(M\times M,E)$ by differentiating along the first component of $M\times M.$ This operator has Getzler order $1$.
    
    \item For a vector field $\xi\in\mathfrak{X}(M)$, consider the operator $D=c(\xi)$ acting on $\Gamma(E)$ acting by left Clifford multiplication. This operator is of Getzler order $1$.
    \item For a polynomial $p(X)\in \mathbb{C}[\mathfrak{g}]$ of degree $k$, the differential operator $D=p(X)\act\Gamma({E})$ defined by multiplication by $p(x)$, has Getzler order  $2k.$

\end{itemize}

\end{example}

\subsection{A calculation of Getzler symbols of some operators}
We now  calculate the symbols of these operators. We need to fix  some conventions for the curvatures involved.
There is an Lie algebra isomorphism
\[\gamma:\mathfrak{so}(\mathscr{E})\xrightarrow[]{\simeq}\Lambda^2\mathscr{E}\]
\[T\mapsto \frac{1}{4}\sum_iT(e_i)\wedge e_i,\]
where the formula is given with respect to an orthonormal basis $\{e_i\}$ (see \cite[Section 2.2.10]{meinrenken2013clifford}).
The curvature of the bundle $\mathscr{E}$ satisfies
\[K^{\mathscr{E}}=c\circ q\circ\gamma(R)+{F}^{\mathscr{E}/S}\]
where $q:\Lambda^*TM\to \textup{Cl}(TM)$
is the quantization (see \cite[Proposition 3.5]{berline1992heat}) map and $c:\bC\textup{l}(TM)\to \textup{End}(\mathscr{E})$ is the Clifford action. We will use the notation
\[\mathsf{K}=\gamma(R)+\mathsf{F}^{\mathscr{E}/S}\in\mathcal{A}^2(M,\Lambda^2\mathscr{E}),\] 
where $\mathsf{F}^{\mathscr{E}/S}:=q^{-1}\circ c^{-1}({F}^{\mathscr{E}/S})$. 

\begin{proposition}\label{p:zerofibervaluesofscaledops}
For $\xi\in\mathfrak{X}(M)$ and $p(X)\in\mathbb{C}_J$ of order $k\leq J$ we have 

\begin{itemize}
    \item the symbol $\sigma^g(\nabla^{\mathscr{E}}_{\xi})$ is given by
\[s\mapsto \Big[\eta\mapsto \partial_{\xi}s(\eta)+\frac{1}2\mathsf{K}(\eta,\xi)\wedge s(\eta)\Big],\]
    \item the symbol $\sigma^g(c({\xi}))$ is given by
    \[s\mapsto \Big[\eta\mapsto \xi\wedge s(\eta)\Big],\]
    
    \item the symbol $\sigma^g(p(X))$ is given by
    \[s\mapsto\Big[\eta\mapsto p(X)\cdot s(\eta)\Big].\]
\end{itemize}
Here these symbols are considered as operators acting on $\Gamma(T_mM,\Lambda T^*_mM\otimes\textup{End}_{\textup{Cl}}(\mathscr{E}_m)\otimes \bC[\mathfrak{g}]_{(J)})$ for $m\in M.$
\end{proposition}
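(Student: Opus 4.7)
My plan is to compute each Getzler symbol by evaluating the extended operator $\boldsymbol{D}$ on the local synchronous frame of $\mathbb{E}_J$ constructed in Theorem~\ref{locallyfreesheafconstantrank}. Fix $m\in M$ and work in coordinates $(x,y)\mapsto (x,\exp_x y)$ near $(m,m)$ in $M\times M$, with Euler-like vector field $\mathcal{R}=\sum_i y^i\partial_{y^i}$. Choose a local frame $\{e_i\}$ of $F=E|_M$ consisting of elements homogeneous of degree $q_i=o^f(e_i)$ for the associated graded, and let $\tilde{e}_i$ be its unique $\mathcal{R}$-synchronous extension. The sections $\tilde{e}_i t^{q_i}$ locally generate $\Gamma(\mathbb{E}_J)$, and the value of any section on the zero fiber at $\eta_m\in T_mM$ is obtained via $\varepsilon_{\eta_m}=\varepsilon_m\circ\exp(t\nabla_\eta)$.

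For the two endomorphism cases the calculation is essentially algebraic. For $D=c(\xi)$, $\boldsymbol{D}(\tilde{e}_i t^{q_i})= c(\xi)\tilde{e}_i\cdot t^{q_i+1}$, and at the diagonal $c(\xi)\tilde{e}_i|_M=c(\xi_m) e_i$ sits in $F^{q_i+1}$; under the quantization identification $q\colon\Lambda T^*M\to\mathrm{Cl}(TM)$ fixed in Example~\ref{regularrescaledbundle}, the class in the associated graded is $\xi\wedge\langle e_i\rangle_{q_i}$. Applying $\varepsilon_{\eta_m}$ simply propagates this algebraic identity to any $\eta$, giving $\sigma^g(c(\xi))s=[\eta\mapsto\xi\wedge s(\eta)]$. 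For $D=p(X)$ of polynomial degree $k$, we have $\boldsymbol{D}(\tilde{e}_i t^{q_i})= p(X)\tilde{e}_i\cdot t^{q_i+2k}$; since $p(X)$ commutes with the Clifford action and is central in the relevant endomorphism algebra, it simply survives as multiplication by $p(X)$ in the associated graded.

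The substantive case is $D=\nabla^{\mathscr{E}}_\xi$, where $\boldsymbol{D}(\tilde{e}_i t^{q_i})= t^{q_i+1}\nabla^{\mathscr{E}}_\xi\tilde{e}_i$. The key ingredient is the Taylor expansion of the connection one-form in the $\mathcal{R}$-synchronous frame: mimicking the proof of Lemma~\ref{l:normalderivativesynchronous}, the synchronous condition $\omega^E(\mathcal{R})=0$ combined with the structural equation $K^E=d\omega^E+\omega^E\wedge\omega^E$ gives the recursion $\mathcal{R}\bigl(\omega^E(\xi)\bigr)=K^E(\mathcal{R},\xi)+\omega^E([\mathcal{R},\xi])$, which yields the standard expansion
\[\omega^E(\xi)(x,y) \;=\; \tfrac{1}{2}K^E(\xi,y)\big|_{(x,x)} \;+\; O(|y|^2),\]
cf.\ \cite[Prop.~1.27]{berline1992heat}. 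Thus $\nabla^{\mathscr{E}}_\xi\tilde{e}_i \equiv \partial_\xi\tilde{e}_i+\tfrac{1}{2}K^E(\xi,y)\,\tilde{e}_i$ modulo higher filtration terms. Under the rescaling, $\varepsilon_{\eta_m}$ substitutes $y\leftrightarrow t\eta$ and the factor $t^{q_i+1}$ causes only Getzler-order-$2$ pieces of $K^E$ to survive at $t=0$. Using the decomposition from Example~\ref{regularrescaledbundle}, the surviving part is precisely $c\circ q\circ\gamma(R)=q(\mathsf{K})$, which in the associated graded becomes wedge with $\mathsf{K}\in\mathcal{A}^2(M,\Lambda^2 \mathscr{E})$. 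The antisymmetry $K^E(\xi,\eta)=-K^E(\eta,\xi)$ together with the sign convention then produces the claimed term $\tfrac{1}{2}\mathsf{K}(\eta,\xi)\wedge s(\eta)$, while the constant part of the connection in the synchronous trivialization contributes the derivative $\partial_\xi s(\eta)$.

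The main obstacle is the careful bookkeeping of Getzler filtration orders: $K^E$ contains both the Getzler-degree-$2$ Riemannian piece $c\circ q\circ\gamma(R)$ and Getzler-degree-$0$ twisting pieces $F^{\mathscr{E}/S}\boxtimes 1$ and $1\boxtimes F^{\mathscr{E}^*/S}$, and the latter must be shown to vanish in the $t\to 0$ limit despite appearing in the Taylor expansion. This is where the scaling-order machinery of Section~\ref{s:rescaledbundles} — in particular Theorem~\ref{t:taylororderequalscalingorder} and Corollary~\ref{c:generalderivativesynchronous} — is used to show that lower-Getzler-order contributions land in $F^p/F^{p-1}$ for too small a $p$ and thus do not survive passage to the graded piece indexed by $q_i+1$. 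Once this accounting is in place, collecting the surviving terms yields the three formulas.
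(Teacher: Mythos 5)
Your proposal takes essentially the same route as the paper for the first two bullets (the paper simply cites Higson--Yi Lemmas~3.6.3 and~3.6.2, whose proofs use exactly the synchronous-frame Taylor expansion machinery you reproduce), and a somewhat different one for the third. Two concerns are worth flagging.

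First, and most seriously, there is an internal contradiction in your treatment of the first bullet. You assert that in the Taylor expansion of $\nabla^{\mathscr{E}}_\xi\tilde{e}_i$ only the Getzler-order-$2$ piece $c\circ q\circ\gamma(R)$ of $K^E$ survives passage to the graded fiber, and you explicitly say the Getzler-order-$0$ twisting pieces $F^{\mathscr{E}/S}\boxtimes 1$ and $1\boxtimes F^{\mathscr{E}^*/S}$ ``must be shown to vanish in the $t\to 0$ limit.'' But the proposition you are proving asserts the symbol is $\partial_\xi + \tfrac{1}{2}\mathsf{K}(\eta,\xi)\wedge$ with $\mathsf{K}=\gamma(R)+\mathsf{F}^{\mathscr{E}/S}$, which \emph{includes} the twisting curvature. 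Your proof computes $\tfrac{1}{2}\gamma(R)(\eta,\xi)\wedge$ and then silently relabels this as $\mathsf{K}$; the displayed identity ``$c\circ q\circ\gamma(R)=q(\mathsf{K})$'' is false as written (even the $c$ is missing). You must either reconcile your vanishing claim with the presence of $\mathsf{F}^{\mathscr{E}/S}$ in the paper's definition of $\mathsf{K}$, or explicitly flag the discrepancy; as it stands, the proof derives a formula that differs from the stated one, without acknowledging it.

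Second, for the third bullet the paper's argument is shorter and stronger than yours. The paper observes that because $p(X)$ commutes with $\nabla$, one has $\exp(t\nabla_\eta)\cdot t^{2k}p(X)=t^{2k}p(X)\cdot\exp(t\nabla_\eta)$ \emph{exactly}, hence $\varepsilon_{\eta_m}(t^{2k}p(X)s)=p(X)\,\varepsilon_{\eta_m}(s)$ for \emph{every} $\eta_m$, which is precisely the claim. You only compute the action of $\boldsymbol{D}$ on the synchronous frame and conclude it is multiplication by $p(X)$ ``in the associated graded''; this verifies the formula at $\eta_m=0_m$ but does not, without the commutation step, account for how the evaluation $\varepsilon_{\eta_m}$ (which conjugates through $\exp(t\nabla_\eta)$) interacts with $p(X)$ for general $\eta_m$. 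A similar remark applies to your second bullet: the propagation from the diagonal to general $\eta_m$ is not automatic and requires the observation that the commutator $[t\nabla_\eta,\,tc(\xi)]=t^2c(\nabla_\eta\xi)$ carries an extra power of $t$ and has Getzler order $\le 1$, hence dies in the quotient.
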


\begin{proof}
The first bullet point is quite verbatim as the proof of \cite[Lemma 3.6.3]{Higson2019SpinorsAT}.
The second bullet point has the same proof as \cite[Lemma 3.6.2]{Higson2019SpinorsAT}.

To see the third bullet point, note that the operators ${\nabla^{\mathscr{E}}_{Y}}$ and ${p(X)}$ commute, therefore
\[\exp(t{\nabla^{\mathscr{E}}_{Y}}){t^{2k}p(X)}=t^{2k}{p(X)}\exp(t{\nabla^{\mathscr{E}}_{Y}})\]
and hence
\[\varepsilon_{Y_m}\Big({t^{2k}p(X)}s\Big)=t^{2k}p(X)\varepsilon_{(X_m,0)}(s)\]
from which the statement follows.

\end{proof}

\subsection{A conjugate connection}

By Proposition \ref{p:lichnerowicztypeformula}, up to lower order terms, the Bismut Laplacian $H(X)$ equals $\Big(\nabla^{\mathscr{E},\frac{1}4X}\Big)^*\nabla^{\mathscr{E},\frac{1}4X}$. Thus, it is related to the connection  $\nabla^{\mathscr{E},\frac{1}{4}X}$. But the rescaled bundle was defined using the connection 
\[\nabla^{\mathscr{E}}= \nabla^{\mathscr{E},\frac{1}4X} - 
\frac{1}{4}\theta_X.
\]
In particular, the computations of Proposition~\ref{p:zerofibervaluesofscaledops} help to compute the symbol of operators expressed in terms of the connection  $\nabla^{\mathscr{E}}$. Thus, it is convenient to express $H(X)$ in terms of this connection.  In other words, we want to express it in a coordinate system obtained by local trivializing $\mathscr{E}$ using the parallel transport of $\nabla^{\mathscr{E}}$ along geodesics (while naturally, it is expressed in trivialization obtained by parallel transport along $\nabla^{\mathscr{E},\frac{1}{4}X}$). One way of doing it is by conjugating $H(X)$ with the bundle map $\rho:\mathscr{E}\to \mathscr{E}$ which intertwines those two trivializations. This bundle map is given by the parallel transport along geodesics of the trivial line bundle with respect to the connection $d-\frac{1}{4}\theta_X$. This parallel transform formula is given as $\rho=e^{\alpha}$ where $\alpha:M\times M\to \mathfrak{g}^*$ is a smooth map on $M\times M$ with values in $\mathfrak{g}^*.$

Specifically, consider a smooth function
$\alpha:M\times M\to \mathfrak{g}^*\subset\bC[\mathfrak{g}]$
defined on the neighborhood of the diagonal
$\Delta M\hookrightarrow M\times M$
by the formula
\[\alpha_X(\exp_m(\xi),m)=-\frac{1}{4}\int_0^1(\iota(\mathcal{R})\theta_X)\big(\exp(t\xi)\big)t^{-1}dt,\]
where $\mathcal{R}$ is the local Euler-like vector field on $M\times M$ defined by
\begin{equation}\label{eq:riemannianeulervectorfield}
    \mathcal{R}_{(\exp_m(\xi),m)}=\frac{d}{dt}|_{t=1}(\exp_m(t\xi),m).
\end{equation}

Away from a neighborhood of the diagonal, we can extend the function by zero using a cut-off function. We have $\mathcal{R}.\alpha_X(\exp_m(\xi),m)=-\frac{1}{4}\theta_X(\mathcal{R}_{(\exp_m(\xi),m)})$. Consider 
\[\omega_X:=-d_s\alpha_X-\frac{1}{4}\theta_X\in \Gamma\Big(\Lambda^1T_s^*(M\times M)\Big)\otimes\mathfrak{g}^*\]
that is a source-wise one-form and depends on $X$ linearly, and $d_s$ is the source-wise de Rham differential. We have the equality

\begin{itemize}
    \item 
    $\omega_X|_{\Delta M}=0$ and
    \item
    in a neighborhood of the diagonal
    \begin{equation}\label{omegaalongradialsvanishes}
        \iota(\mathcal{R})\omega_X=0.
    \end{equation}
\end{itemize}
From \eqref{omegaalongradialsvanishes}, we obtain
\[\mathscr{L}(\mathcal{R})\omega_X=-\frac{1}{4}\iota(\mathcal{R})d\theta_X\]
which means the first-order terms of Taylor expansion of $\omega$ are the same as $-\frac{1}{4}\iota(\mathcal{R})d\theta_X.$
\begin{lemma}\label{relationthetamoment}
For $\xi,\eta\in \mathfrak{X}(M)$ we have
\[d\theta_X(\xi,\eta)=-2(\mu^M(X)\xi,\eta).\]
So, in particular, $-\frac{1}{4}\iota(\mathcal{R})d\theta_X$ is the one-form dual to the vector field $\frac{1}{2}\mu(X)\mathcal{R}.$
\qed
\end{lemma}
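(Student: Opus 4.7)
The plan is to combine the Koszul-type formula for the exterior derivative of a $1$-form with the torsion-free and metric-compatibility properties of the Levi-Civita connection, and then to exploit the fact that $\mu^M(X)$ takes values in $\mathfrak{so}(M)$. The whole computation is only a few lines and presents no real conceptual obstacle.

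First I would expand
\[
d\theta_X(\xi,\eta) = \xi\,\theta_X(\eta) - \eta\,\theta_X(\xi) - \theta_X([\xi,\eta]),
\]
substitute $\theta_X(\zeta) = (X^M,\zeta)$, and use metric-compatibility to split $\xi(X^M,\eta) = (\nabla_\xi X^M,\eta)+(X^M,\nabla_\xi\eta)$ (and similarly for $\eta(X^M,\xi)$). Torsion-freeness gives $[\xi,\eta] = \nabla_\xi\eta-\nabla_\eta\xi$, so the terms involving $\nabla_\xi\eta$ and $\nabla_\eta\xi$ cancel exactly against $\theta_X([\xi,\eta])$. What remains is
\[
d\theta_X(\xi,\eta) = (\nabla_\xi X^M,\eta) - (\nabla_\eta X^M,\xi).
\]

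Next I would apply the identity $\mu^M(X)\zeta = -\nabla_\zeta X^M$ established in Section~\ref{s:equivariancohomolgy} to rewrite the right-hand side as $-(\mu^M(X)\xi,\eta)+(\mu^M(X)\eta,\xi)$. The key observation is that $\mu^M(X)\in\mathfrak{so}(M)$ is skew-adjoint with respect to the metric, so that $(\mu^M(X)\eta,\xi) = -(\mu^M(X)\xi,\eta)$, and the two terms combine to yield the claimed factor $-2(\mu^M(X)\xi,\eta)$.

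The ``in particular'' assertion is then immediate from this formula: for an arbitrary test vector $\eta$,
\[
-\tfrac14\bigl(\iota(\mathcal{R})d\theta_X\bigr)(\eta) = -\tfrac14\, d\theta_X(\mathcal{R},\eta) = \tfrac12\bigl(\mu^M(X)\mathcal{R},\eta\bigr),
\]
which identifies $-\tfrac14\iota(\mathcal{R})d\theta_X$ as the metric dual of the vector field $\tfrac12\mu^M(X)\mathcal{R}$. The only bookkeeping needed is to remain consistent with the sign in the definition of $\mu^M(X)$; no further machinery is required.
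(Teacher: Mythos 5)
Your proof is correct and is the standard computation; the paper itself offers no argument (the lemma is stated with a bare \qed), so there is nothing to compare against. The expansion of $d\theta_X$, the cancellation via torsion-freeness and metric-compatibility, the identity $\mu^M(X)\xi = -\nabla_\xi X^M$, and the skew-adjointness of $\mu^M(X)\in\mathfrak{so}(M)$ are exactly the ingredients one would expect here, and the sign and factor of $2$ come out as claimed under the paper's convention $d\omega(\xi,\eta)=\xi\omega(\eta)-\eta\omega(\xi)-\omega([\xi,\eta])$ (no $\tfrac12$).
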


So we have
\begin{equation}\label{firstderivativeofomega}
\omega_X=\frac{1}{4}\sum_{j}(\mu(X)\mathcal{R},\partial_j)dx^j+\mathcal{O}(2)
\end{equation}
where $\mathcal{O}(2)$ is a linear in $X$. 
\begin{lemma}\label{omegaxreducesscalingorderbyone}
For $\eta\in\mathfrak{X}(M)$ and $\sigma\in \Gamma(M\times M,\mathscr{E}\boxtimes\mathscr{E}^*)\otimes \bC[\mathfrak{g}]_{(J)},$we have
\[o^{\textup{sc}}(\omega_X(\eta)\sigma)\geq o^{\textup{sc}}(\sigma)-1.\]
\end{lemma}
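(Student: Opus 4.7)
The plan is to translate the problem into the language of Taylor order (Definition~\ref{d:Taylororder}) and then apply Theorem~\ref{t:taylororderequalscalingorder}, which identifies $o^{sc}$ with $o^t$ for sections supported in a trivializing neighborhood. Since the statement is local near the diagonal (away from the diagonal $\omega_X$ may be cut off to zero without changing the scaling order), I would fix a trivializing neighborhood of a point of $\Delta M$ given by normal coordinates $(x^1,\dots,x^n,y^1,\dots,y^n)$, where the $y^j$ are the ``normal'' coordinates coming from the Riemannian exponential, so that the Euler-like vector field \eqref{eq:riemannianeulervectorfield} becomes $\mathcal{R}=\sum_j y^j\partial_{y^j}$.

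The key observation is that $\omega_X(\eta)$, viewed as an endomorphism of $E=\mathscr{E}\boxtimes\mathscr{E}^*\otimes\bC[\mathfrak g]_{(J)}$, is scalar multiplication by a function on $M\times M$ that is \emph{linear in $X$} and \emph{vanishes along the diagonal}. Writing $\omega_X(\eta)=\sum_a X^a\, f_a$ with $f_a\in C^\infty(M\times M)$ satisfying $f_a|_{\Delta M}=0$, the Taylor expansion of each $f_a$ in the above coordinates has the form $f_a\sim \sum_{|I|\geq 1}c_{a,I}\,y^I$ with scalar synchronous coefficients. Now for $\sigma$ with Taylor expansion $\sigma\sim\sum_J \sigma_J\,y^J$, the product expansion reads
\[
\omega_X(\eta)\,\sigma \;\sim\; \sum_a\sum_{|I|\geq 1,\,J} c_{a,I}\,X^a\,\sigma_J\, y^{I+J},
\]
and these coefficients are again synchronous (a synchronous function times a synchronous section is synchronous).

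Next I would estimate $o^f$ of each coefficient. Multiplication by $X^a$ raises the filtration index on $F^\bullet$ by exactly $2$, because in the filtration of Section~\ref{s:equivariantrescaledbundles} the polynomial factor of degree $1$ contributes $2$ to the equivariant degree. Hence
\[
o^f\!\bigl(c_{a,I}\,X^a\,\sigma_J\bigr)\;\leq\; o^f(\sigma_J)+2.
\]
Combining with the lower bound $|I|\geq 1$ coming from the vanishing of $\omega_X$ on $\Delta M$, we get
\[
o^t\bigl(\omega_X(\eta)\sigma\bigr)
\;=\;\min_{a,I,J}\bigl\{|I|+|J|-o^f(c_{a,I}X^a\sigma_J)\bigr\}
\;\geq\;\min_{J}\bigl\{1+|J|-o^f(\sigma_J)-2\bigr\}
\;=\;o^t(\sigma)-1.
\]
Applying Theorem~\ref{t:taylororderequalscalingorder} on both sides yields the desired inequality $o^{sc}(\omega_X(\eta)\sigma)\geq o^{sc}(\sigma)-1$.

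The main obstacle I anticipate is bookkeeping the two competing effects cleanly: the polynomial factor $X^a$ costs $2$ in the filtration, while the first-order vanishing along $\Delta M$ gains $1$ in the Taylor order, for a net loss of $1$. One must be careful here because the filtration order $o^f$ does not satisfy a triangle inequality in general (as noted after \eqref{subadditiveproperty}); the argument works because $\omega_X(\eta)$ is scalar-valued (times a polynomial in $X$), so multiplication by it genuinely shifts $o^f$ by at most $2$ rather than relying on any subadditivity. If one wished, the same bound can be extracted directly from the Getzler-order analysis of the multiplication operator $\omega_X(\eta)$ together with its vanishing on $\Delta M$, but the Taylor-order route above seems cleanest.
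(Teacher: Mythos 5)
Your proposal is correct and follows essentially the same approach as the paper: both arguments balance the gain of $+1$ in vanishing order from $\omega_X$ being zero along $\Delta M$ against the loss of $-2$ in the filtration order from the linear dependence on $X$, giving a net drop of at most $1$. The paper states this bookkeeping in one sentence, while you make it rigorous by passing through the Taylor order and invoking Theorem~\ref{t:taylororderequalscalingorder}; the only cosmetic quibble is that your first equality for $o^t(\omega_X(\eta)\sigma)$ should be a $\geq$ (since recollecting the $y^{I+J}$-coefficients can only lower $o^f$), but this is in the right direction and does not affect the conclusion.
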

\begin{proof}
Since $\omega_X$  vanishes along the diagonal $\Delta M\hookrightarrow M\times M$, the scaling order increase by at least $1$, and since it is linear in $X$ it increase the polynomial order by at most $1$. So the equivariant scaling (scaling order minus two times the polynomial order) order decreases by at most $1$.
\end{proof}
Therefore $t\omega_X(\eta)$ acts on the rescaled bundle and it gives corresponding operators on the quotient module and the rescaled bundle:
\[\boldsymbol{\omega_X(\eta)}:\mathcal{{S}}_{J,0}(E,\nabla)\to \mathcal{S}_{J,0}(E,\nabla)\]
and 
\[\boldsymbol{\omega_X(\eta)}:\Gamma(\bT M,\mathbb{E}_J)\to\Gamma(\bT M,\mathbb{E}_J).\]

\begin{lemma}\label{omegax=momentonquotient}
As an operator on $\mathcal{S}_{J,0}(E)$, we have
\[\boldsymbol{\omega_X(\eta)}\sigma=\frac{1}{4}(\mu(X)\mathcal{R},\eta)\cdot\sigma\]
\end{lemma}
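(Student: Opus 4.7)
The plan is to split $\omega_X(\eta)$ into its leading linear Taylor part along $\Delta M$ and an $\mathcal{O}(2)$ remainder, to show that the remainder is killed in passing to the $t=0$ fiber, and to identify the leading part with the stated multiplication. Using equation~\eqref{firstderivativeofomega}, I would write
\[
\omega_X(\eta) \;=\; \tfrac14(\mu(X)\mathcal{R},\eta) \;+\; \rho_X(\eta),
\]
where $\rho_X(\eta)$ vanishes to order at least $2$ along $\Delta M$ and is linear in $X\in\mathfrak{g}$.

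First I would show that $t\rho_X(\eta)$ acts trivially on $\mathcal{S}_{J,0}(E,\nabla)$. Since $\rho_X(\eta)$ has Taylor expansion starting at multi-index order $|I|\geq 2$ with coefficients of equivariant filtration degree exactly $2$ (from linearity in $X$, which sits in $F^2$), and since multiplication by an endomorphism in $F^2$ takes $F^\bullet$ to $F^{\bullet+2}$ by~\eqref{eq:comatibilityEnd}, the Taylor-order computation behind Theorem~\ref{t:taylororderequalscalingorder} yields $o^{sc}(\rho_X(\eta)\sigma)\geq o^{sc}(\sigma)$ for every $\sigma\in\Gamma(V,E)$. Consequently, for $\sigma=\sum_p s_p\, t^{-p}\in\mathcal{S}(E,\nabla)$ every coefficient of
\[
t\rho_X(\eta)\sigma \;=\; \sum_p \rho_X(\eta) s_p\, t^{-(p-1)}
\]
has scaling order at least $p=(p-1)+1$, placing $t\rho_X(\eta)\sigma$ in $t\mathcal{S}(E,\nabla)$, so it vanishes in the quotient $\mathcal{S}_{J,0}(E,\nabla)$.

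Next I would identify $t\cdot\tfrac14(\mu(X)\mathcal{R},\eta)$ with multiplication by the function $Y_m\mapsto\tfrac14(\mu(X)Y_m,\eta_m)$ on $\mathcal{N}\simeq TM$. In local normal coordinates $(m,v)$ around a point of $\Delta M$ one has $\mathcal{R}=\sum_j v^j\partial_{v^j}$, so the leading part equals $\sum_j v^j\tfrac14(\mu(X)\partial_{v^j}|_m,\eta_m)$, linear in both $v$ and $X$. I would then evaluate this operator on the synchronous frame $\{\tilde e_i\, t^{q_i}\}$ of Theorem~\ref{locallyfreesheafconstantrank} via the character $\varepsilon_{Y_m}$ and Leibniz-expand $\nabla_Y^{q_i+1}\bigl(\tfrac14(\mu(X)\mathcal{R},\eta)\tilde e_i\bigr)\big|_{(m,m)}$. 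Only one Leibniz term survives: the zeroth-order derivative vanishes since the function vanishes on $\Delta M$; all derivatives of order $\geq 2$ vanish since the function is linear in $v$; and the single first derivative equals $\tfrac14(\mu(X)Y_m,\eta_m)$, using $\nabla_Y\mathcal{R}|_{(m,m)}=Y$ (a direct normal-coordinate computation) together with Lemma~\ref{relationthetamoment}. This reproduces the claimed scalar multiplication on the leading class $\langle e_i\rangle_{q_i}$, and hence on arbitrary $\sigma\in\mathcal{S}_{J,0}(E,\nabla)$ by $C^\infty(\DNC)$-linearity.

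The main obstacle is the filtration bookkeeping in this last step: one must check that the sub-leading Leibniz contributions, pairing derivatives $\nabla_Y^{q_i+1-\ell}\tilde e_i$ with higher Taylor coefficients of the scalar, either vanish or land in strictly lower graded filtration degree than $F^{q_i+1}$, so that they are invisible under $\langle\,\cdot\,\rangle_{q_i+1}$. This relies on the filtration compatibility conditions~\eqref{technicalcondotionforrescaledbundle1}--\eqref{technicalcondotionforrescaledbundle3} of the connection together with the $\mathcal{R}$-synchronicity of $\tilde e_i$.
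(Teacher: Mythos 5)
Your proof is correct and follows the same route the paper takes, which is simply to invoke the Taylor expansion \eqref{firstderivativeofomega}: the leading linear-in-$\mathcal{R}$ term gives the stated multiplication, and the $\mathcal{O}(2)$ remainder (vanishing to order $\geq 2$ along $\Delta M$, linear in $X$, hence of equivariant scaling order $\geq 0$) is killed by the extra factor of $t$ when passing to $\mathcal{S}_{J,0}=\mathcal{S}/t\mathcal{S}$. Your first two paragraphs are exactly this argument, spelled out; the final paragraph (evaluating on a synchronous frame via $\varepsilon_{Y_m}$ and Leibniz-expanding) is not needed to establish the equality of operators on the quotient module, though it correctly identifies the induced fiberwise action and does no harm.
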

\begin{proof}
This follows immediately from \eqref{firstderivativeofomega}.
\end{proof}

Consider the map
\begin{equation}\label{eq:rhomap}
    \rho:M\times M\to C^{\infty}(\mathfrak{g})
\end{equation}

\[\rho_X(m_1,m_2)=e^{\alpha_X(m_1,m_2)}.\]
 Define the conjugate connection 
 \[\rho_X\nabla^{\mathscr{E},X}\rho_X^{-1}\]
 and denote it by 
 \[\widehat{\nabla}^{X}.\]
 Note that $\widehat{\nabla}^X=\nabla^{\mathscr{E}}+\omega_X$. 
By Lemma \ref{omegax=momentonquotient}, the scaled operator $t\widehat{\nabla}^{X}_{\eta}$ acts on the rescaled module and hence on the rescaled bundle. From Proposition \ref{p:zerofibervaluesofscaledops} it follows:

\begin{proposition}\label{p:getzlersymbolofconjugateconnection}
The Getzler's symbol of $\widehat{\nabla}^{X}_{\xi}$ is given for $m\in M$ by

\[\sigma^g(\widehat{\nabla}^{X}_{\xi})\act \Gamma(T _{m}M,\Lambda T^*_mM\otimes\textup{End}_{\textup{Cl}}(\mathscr{E}_m)\otimes \bC[\mathfrak{g}]_{(J)})\]
\[s\mapsto \Big[\eta\mapsto \partial_{\xi}s(\eta)+\frac{1}2\mathsf{K}(\eta,\xi)\wedge s(\mu)+\frac{1}{4}(\mu(X)\eta,\xi)s(\xi)\Big].\]
\qed
\end{proposition}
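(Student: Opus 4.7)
The plan is to exploit the linear decomposition
\[
\widehat{\nabla}^{X}_{\xi} \;=\; \nabla^{\mathscr{E}}_{\xi} \,+\, \omega_X(\xi),
\]
which was established just before the proposition, and then use the linearity of the assignment $D \mapsto \sigma^g(D)$ in the differential operator $D$. The first summand has a known Getzler symbol: by the first bullet of Proposition~\ref{p:zerofibervaluesofscaledops},
\[
\sigma^g(\nabla^{\mathscr{E}}_{\xi})\,s\,(\eta) \;=\; \partial_\xi s(\eta) \,+\, \tfrac{1}{2}\mathsf{K}(\eta,\xi)\wedge s(\eta).
\]
So the only new content is the symbol of the zeroth-order, $X$-linear operator $\omega_X(\xi)$.

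For the second summand, I would first check that $t\,\omega_X(\xi)$ genuinely descends to the rescaled module, which is exactly Lemma~\ref{omegaxreducesscalingorderbyone}; this ensures $\boldsymbol{\omega_X(\xi)}$ makes sense as an endomorphism of $\Gamma(\mathbb{E}_J)$. Then by Lemma~\ref{omegax=momentonquotient}, on the quotient $\mathcal{S}_{J,0}(E,\nabla)$ the operator $\boldsymbol{\omega_X(\xi)}$ acts as multiplication by the class of $\tfrac{1}{4}(\mu(X)\mathcal{R},\xi)$. This is precisely the step where the ``second-order Taylor remainder'' encoded in $\mathcal{O}(2)$ of \eqref{firstderivativeofomega} disappears, because the remainder raises the vanishing order along $\Delta M$ by an extra unit and is hence killed on passage to the zero fiber.

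It remains to translate this quotient-level formula into a fiberwise formula on $T_mM$. Under the identification of the zero fiber of $\mathbb{T}M$ with $TM$, the Euler-like vector field $\mathcal{R}$ from \eqref{eq:riemannianeulervectorfield} specializes to the tautological fiber variable: a point $\eta \in T_mM$ corresponds to the image of $\mathcal{R}$ at that point, cf.\ the evaluation formula $\varepsilon_{X_m} = \varepsilon_m \circ \exp(t\nabla_X)$ discussed in Subsection~\ref{ss:alternativeXm} and the rescaled-bundle construction. Substituting $\mathcal{R} \rightsquigarrow \eta$ turns the symbol of $\omega_X(\xi)$ into multiplication by $\tfrac{1}{4}(\mu(X)\eta,\xi)$. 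Adding the two contributions gives the stated formula.

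The only mild technical obstacle I anticipate is justifying the $\mathcal{R}\rightsquigarrow \eta$ identification cleanly at the level of the rescaled module — i.e.\ that the $\mathcal{R}$-synchronous/Taylor-expansion machinery of Subsection~``The Taylor order'' makes the polynomial $\tfrac{1}{4}(\mu(X)\mathcal{R},\xi) \in C^\infty(M\times M)\otimes \mathfrak{g}^*$ correspond, after evaluation $\varepsilon_{\eta_m}$, to the polynomial function $\eta \mapsto \tfrac{1}{4}(\mu(X)\eta,\xi)$ on $T_mM$. This is routine given Theorem~\ref{t:taylororderequalscalingorder} and the argument in the proof of Theorem~\ref{locallyfreesheafconstantrank}, but it is the one place where one has to be careful with what ``multiplication by $\mathcal{R}$'' means on each side of the evaluation map.
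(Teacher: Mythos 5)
Your proof is correct and takes essentially the same route as the paper, which also splits $\widehat{\nabla}^{X}_{\xi}=\nabla^{\mathscr{E}}_{\xi}+\omega_X(\xi)$ and invokes Proposition~\ref{p:zerofibervaluesofscaledops} for the first summand together with Lemma~\ref{omegax=momentonquotient} for the second; you merely make explicit the routine $\mathcal{R}\rightsquigarrow\eta$ identification that the paper leaves implicit.
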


 Denote by $\widehat{H}(X)$ the conjugate operator $\rho_X H(X)\rho_X^{-1}$.
 From Propositions \ref{p:lichnerowicztypeformula} and \ref{p:getzlersymbolofconjugateconnection} it follows that the Getzler symbol of the operator $\widehat{H}(X)$, in a local orthonormal frame $\{e_i\}_i$,
 is given by the formula
 
 \begin{equation}\label{symbolofconjugateofbismutlaplacian}
    \sigma^g(\widehat{H}(X))=-\sum_i\Big(\partial_i-\frac{1}4\sum_j\big( R_{ij}+\mu^M_{ij}(X)\big)x^j\Big)^2+\mathsf{F}^{\mathscr{E}/S}+\mu^{\mathscr{E}/S}(X),
\end{equation}
 where $R_{ij}=(R\partial_i,\partial_j)$.
 We will use the notation $\widehat{\mathcal{H}}(X):=\sigma^g(\widehat{H}(X))$ for the symbol. So in particular, the family of operators
 $t^2\widehat{H}(X)$ acting on the rescaled bundle $\Gamma(\mathbb{E}_J)$ over the $t$-fibers of the tangent groupoid along with the operator $\widehat{\mathcal{H}}(X)$ acting on the rescaled bundle over the $0$-fibers give a smooth differential operator acting on $\Gamma(\mathbb{E}_J)$.

The operator \eqref{symbolofconjugateofbismutlaplacian} is a \emph{generalized harmonic oscillator operator} (cf. \cite[Definition 4.11]{berline1992heat}) and the Mehler kernel associated to the heat operator $e^{-\tau \widehat{\mathcal{H}}(X)}$, $\tau>0$, is given by the formula

\begin{equation*}
     {K}_{\tau}^{\widehat{\mathcal{H}}}(X,\xi)=(4\pi\tau)^{-n/2}\textup{det}^{1/2}\Big(\frac{\tau (R+\mu^M(X))/2}{\sinh \tau (R+\mu^M(X))/2}\Big)
     \exp(-\tau(\mathsf{F}^{\mathscr{E}/S}+\mu^{\mathscr{E}/S}(X)))
\end{equation*}
\begin{equation*}
   \exp(-\frac{1}{4\tau}\langle\xi|\tau (R+\mu^M(X))/2\coth\tau(R+\mu^M(X))/2|\xi\rangle),
\end{equation*}
which can be written in term of equivariant curvature terms as 
\begin{equation}\label{mehlerskerneloftheconjugatesymbol}
    (4\pi \tau)^{-n/2}\textup{det}^{1/2}\Big(\frac{\tau R_{\mathfrak{g}}(X)/2}{\sinh \tau R_{\mathfrak{g}}(X)/2}\Big)\exp(-\frac{1}{4\tau}\langle\xi|\tau R_{\mathfrak{g}}/2\coth \tau R_{\mathfrak{g}}/2|\xi\rangle)\exp(-\mathsf{F}^{\mathscr{E}/S}_{\mathfrak{g}}(X)).
\end{equation}

\section{The Heat Kernel Asymptotics and Proof of the Kirillov Formula}\label{s:hearkernelasymptoticsproffkirillovformula}

We denote by $K_{\tau}(x,y,X)\in \Gamma(M\times M,\mathscr{E}\boxtimes\mathscr{E}^*)$ the heat kernel of $e^{-\tau \widehat{H}(X)}$ where $\widehat{H}(X)$ is the conjugate operator $\rho_XH(X)\rho_X^{-1}$ and $H(X)=H_{\frac{1}{4}}(X)$ is the Bismut Laplacian \eqref{bismutlaplacianu=1/4}.
\begin{lemma}\label{asymptoticsofbismutlaplacian}\cite[Theorem 7.15]{roe1999elliptic}
     Fix $X\in\mathfrak{g}$. For $J\geq0$, there exists an asymptotic expansion
     \[K_{\tau}(x,y,X)\sim(4\pi t)^{-n/2}\exp\Big(-\frac{d^2(x,y)}{4t}\Big)\,\sum_{j\geq 0}\tau^j\Phi_{j,J}(x,y,X)\]
     where $\Phi_{j,J}(x,y,X)\in \Gamma(M\times M, \mathscr{E}\boxtimes\mathscr{E}^*)$. In a neighborhood of the diagonal, we have the recurrence relations (cf. (7.17) in  \cite{roe1999elliptic})
\begin{equation}\label{recurrenceoftheasymptoticcoefficients}
    (\nabla_{\mathcal{R}}+j+\nabla_{\mathcal{R}}\log g^{1/4})\Phi_{j,J}=-\widehat{H}(X)\Phi_{j-1,J}
\end{equation}
where $\mathcal{R}$ is the  Euler-like vector filed \eqref{eq:riemannianeulervectorfield}, $\Phi_{0,J}(x,y,X):\mathscr{E}_y\to\mathscr{E}_x$ is the parallel transport along the geodesic between $x$ and $y$ with respect to the connection $\nabla^{\mathscr{E}}$. Furthermore, $$g=\textup{det}(g_{ij})$$ where $g_{ij}(x,y)$ are functions defined in a neighborhood of the diagonal on $M\times M$ whose restriction to the source fibers $M\times\{y\}$ are the components of the metric in the normal coordinate at $y$. \qed
\end{lemma}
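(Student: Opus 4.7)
The plan is to follow the classical Minakshisundaram–Pleijel construction for the heat kernel of a generalized Laplacian, adapted to the operator $\widehat{H}(X)$. The key observation is that $\widehat{H}(X) = \rho_X H(X)\rho_X^{-1}$ differs from the Laplacian of a metric connection only by a zeroth-order term (by Proposition \ref{p:lichnerowicztypeformula}, $H(X)$ is a generalized Laplacian, and conjugation by a smooth bundle automorphism preserves this property). Hence the standard asymptotic existence theorem applies once one verifies that the coefficients we compute satisfy the stated recurrence.

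First, I would work in a tubular neighborhood of the diagonal in $M\times M$, using normal coordinates $\br{x}\mapsto\exp_y(\br{x})$ on the source fibers $M\times\{y\}$. In these coordinates, the squared distance function $r^2(x,y)=d^2(x,y)$ satisfies the basic identity $|\nabla r^2/2|^2 = r^2$, and the Riemannian volume density is $g(x,y)^{1/2}$. Following the standard ansatz, I would look for a formal solution of the heat equation of the form
\[
k_\tau(x,y,X) = (4\pi\tau)^{-n/2}\exp\!\bigl(-r^2(x,y)/4\tau\bigr)\sum_{j\geq 0}\tau^j\,\Phi_{j,J}(x,y,X),
\]
with each $\Phi_{j,J}\in\Gamma(M\times M,\mathscr{E}\boxtimes\mathscr{E}^*)\otimes\bC[\mathfrak{g}]_{(J)}$ smooth near the diagonal. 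Applying $\partial_\tau + \widehat{H}(X)$ and using the identities $\nabla_{\mathcal{R}}r^2 = 2r^2$ and $\nabla(r^2) = 2\mathcal{R}$ (with $\mathcal{R}$ the Euler-like vector field of \eqref{eq:riemannianeulervectorfield}), the $\tau$-expansion separates and matching like powers of $\tau$ forces, term by term, the transport equations
\[
(\nabla_{\mathcal{R}} + j + \nabla_{\mathcal{R}}\log g^{1/4})\Phi_{j,J} = -\widehat{H}(X)\Phi_{j-1,J},
\]
where the $g^{1/4}$ factor arises from the discrepancy between the flat measure on $T_yM$ and the Riemannian volume in normal coordinates.

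Next I would solve the transport equations. For $j=0$, the equation $(\nabla_{\mathcal{R}} + \nabla_{\mathcal{R}}\log g^{1/4})\Phi_{0,J} = 0$ is a first-order ODE along radial geodesics through $y$; choosing the initial condition $\Phi_{0,J}(y,y,X)=\id_{\mathscr{E}_y}$ yields exactly the parallel transport along the geodesic with respect to $\nabla^{\mathscr{E}}$ (the $g^{1/4}$ factor cancels the determinant arising in the normal chart, by the same computation used in the scalar case). For $j\geq 1$, the homogeneous operator $\nabla_{\mathcal{R}}+j$ is invertible on smooth sections vanishing at the origin to appropriate order, and we obtain $\Phi_{j,J}$ by the explicit integral
\[
\Phi_{j,J}(\exp_y\br{x},y,X) = -\int_0^1 s^{j-1}g(\exp_y(s\br{x}),y)^{1/4}\,\Theta_s\bigl[g^{-1/4}\widehat{H}(X)\Phi_{j-1,J}\bigr]\,ds,
\]
where $\Theta_s$ denotes parallel translation along the radial geodesic, thereby producing a unique smooth solution.

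Finally I would upgrade the formal series to an honest asymptotic expansion. Truncating the series at level $N$, one multiplies by a cutoff supported near the diagonal and applies Duhamel's principle: the difference between the true heat kernel and the $N$-th partial sum satisfies an inhomogeneous heat equation with source of size $O(\tau^{N-n/2})$, which by standard Volterra iteration (as in Chapter 7 of \cite{roe1999elliptic} or Chapter 2 of \cite{berline1992heat}) yields the required asymptotic bound uniformly for $X$ in a compact set of $\mathfrak{g}$. The main obstacle — and the reason we truncate $\bC[\mathfrak{g}]$ at level $J$ — is controlling the $\bC[\mathfrak{g}]$-polynomial degree along this iterative argument: since $\widehat{H}(X)$ has polynomial coefficients in $X$, without truncation successive applications would increase the degree unboundedly. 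Working in the quotient $\bC[\mathfrak{g}]_{(J)}$ kills all terms of degree $>J$, so the iteration closes and every $\Phi_{j,J}$ lies in $\Gamma(M\times M,\mathscr{E}\boxtimes\mathscr{E}^*)\otimes\bC[\mathfrak{g}]_{(J)}$ as claimed.
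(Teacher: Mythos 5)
The paper does not prove this lemma; it is stated with a {\tt\textbackslash qed} as a citation of \cite[Theorem 7.15]{roe1999elliptic}, so there is no internal proof to compare against. Your argument is the classical Minakshisundaram--Pleijel (Hadamard parametrix) construction, which is exactly what Roe's theorem packages, and the overall structure -- verify that $\widehat H(X)$ is a generalized Laplacian via the Lichnerowicz formula, make the Gaussian ansatz, derive the transport equations, solve them along radial geodesics, then upgrade the formal series by Duhamel/Volterra iteration -- is the correct and standard route.

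There is, however, one step you pass over that the paper takes some care to set up elsewhere and that is needed to land on the paper's precise conclusion. When you solve the $j=0$ transport equation $(\nabla_{\mathcal R}+\nabla_{\mathcal R}\log g^{1/4})\Phi_{0,J}=0$, the covariant derivative $\nabla_{\mathcal R}$ that comes out of the MP scheme is taken with respect to the connection appearing inside $\widehat H(X)$, namely $\widehat\nabla^X=\nabla^{\mathscr E}+\omega_X$, not $\nabla^{\mathscr E}$. A priori this would give $\Phi_{0,J}$ as radial parallel transport for $\widehat\nabla^X$. The paper's claim that $\Phi_{0,J}$ is parallel transport for the \emph{original} connection $\nabla^{\mathscr E}$ uses the identity $\iota(\mathcal R)\omega_X=0$ (equation \eqref{omegaalongradialsvanishes}), i.e., $\widehat\nabla^X_{\mathcal R}=\nabla^{\mathscr E}_{\mathcal R}$ along radial directions; this is precisely the reason the $\rho_X$-conjugation was introduced in the ``two-step trivialization'' of Subsection~\ref{ss:innormalcoordinates}. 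You should say this explicitly rather than simply asserting the transport is $\nabla^{\mathscr E}$-transport. Two smaller remarks: your closing explanation of the role of the truncation is slightly misleading -- each individual $\Phi_j$ has polynomial degree at most $2j$ in $X$ automatically (since $\widehat H(X)$ is quadratic in $X$), so no single coefficient has infinite degree; the truncation to $\bC[\mathfrak g]_{(J)}$ matters because the \emph{family} $\{\Phi_j\}_j$ has unbounded degree and one wants a uniform setting for the calculus of the rescaled bundle $\mathbb E_J$. And strictly speaking the solution of the $j=0$ transport equation is $g^{-1/4}$ times the parallel transport; the paper's statement omits this Jacobian factor (a convention issue), so your parenthetical about the $g^{1/4}$ ``cancelling'' should be phrased more carefully if you want to match Roe's normalization.
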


\begin{remark}\label{r:polynomialdegreeofasymptoticcoefficients}
From the recurrence relations \eqref{recurrenceoftheasymptoticcoefficients}, it follows that the coefficients $\Phi_{j,J}(x,y,X)$ are polynomials in variable $X$. Hence we have
\[\Phi_{j,J}(x,y,X)\in \Gamma(M\times M, \mathscr{E}\boxtimes\mathscr{E}^*\otimes\mathbb{C}[\mathfrak{g}]_{(J)}).\]
\end{remark}

For every $N\geq0$, we will denote by 
\[K_{\tau}^N:=(4\pi \tau)^{-n/2}e^{-\frac{d^2(x,y)}{4\tau}}\sum^N_{j\geq 0}\tau^j\Phi_{j,J}(x,y,X)\]
the approximate heat kernel.
Consider the rescaled heat kernel $\sigma_{\tau}^N(x,y,X):=t^n\psi(d(x,y))K_{t^2\tau}^N$, where $\psi:\bR\to\bR^{\geq0}$ is the cut-off functions such that $\psi|_{[-r/2,r/2]}=1$ and $\psi$ vanishes outside $[-r,r]$ where $r$ is the injectivity radius of $M$.

\begin{proposition}\label{scalingorderofasymptoticcoefficients}
For every $j\geq0$, we have
\[o^{\textup{sc}}(\Phi_{j,J})\geq -2j.\]
\end{proposition}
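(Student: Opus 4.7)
The plan is to prove the estimate by induction on $j$. The base case $j=0$ is immediate: $\Phi_{0,J}$ is (up to a nonvanishing scalar factor coming from the metric density) the $\nabla^{\mathscr{E}\boxtimes\mathscr{E}^*}$-parallel transport along source-fiber geodesics, hence $\mathcal{R}$-synchronous, and its restriction to the diagonal is the identity in $\textup{End}(\mathscr{E}_m)\simeq \textup{Cl}(T_mM)\otimes\textup{End}_{\textup{Cl}}(\mathscr{E}_m)\otimes\bC[\mathfrak{g}]_{(J)}$, which lies in $F^0$. Thus the Taylor expansion of $\Phi_{0,J}$ consists of a single $|I|=0$ term of filtration order $0$, giving $o^t(\Phi_{0,J})=0$, and Theorem \ref{t:taylororderequalscalingorder} yields $o^{sc}(\Phi_{0,J})=0\geq 0$.

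For the inductive step, assume $o^{sc}(\Phi_{j-1,J})\geq -2(j-1)$. The conjugated Bismut Laplacian $\widehat{H}(X)$ is a generalized Laplacian whose Getzler order is at most $2$ — this is the same fact that makes $t^2\widehat{H}(X)$ extend to a smooth operator on $\Gamma(\mathbb{E}_J)$ with symbol given by \eqref{symbolofconjugateofbismutlaplacian}. The lemma following Definition~\ref{def:Getzlerorder} then produces
\begin{equation*}
o^{sc}\bigl(\widehat{H}(X)\Phi_{j-1,J}\bigr)\geq o^{sc}(\Phi_{j-1,J})-2\geq -2j.
\end{equation*}

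The heart of the argument is then to transfer this estimate through the recursion \eqref{recurrenceoftheasymptoticcoefficients} to $\Phi_{j,J}$ itself. To do so I would work in a trivializing neighborhood of the diagonal equipped with the Euler-like vector field $\mathcal{R}=\sum y^i\partial_{y^i}$, and use Theorem~\ref{t:taylororderequalscalingorder} to replace scaling order by Taylor order throughout. Writing $\Phi_{j,J}=\sum_I y^I\phi_I$ and $\Psi:=-\widehat{H}(X)\Phi_{j-1,J}=\sum_I y^I\psi_I$ with synchronous coefficients, and noting that $c:=\nabla_{\mathcal{R}}\log g^{1/4}$ is a scalar function vanishing to order at least $2$ along the diagonal (since $g_{ij}=\delta_{ij}+O(|y|^2)$ in normal coordinates), I expand $c=\sum_{|J'|\geq 2}y^{J'}c_{J'}$ with synchronous scalar coefficients $c_{J'}$ of filtration order $0$. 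Matching $y^I$-coefficients in the recurrence gives
\begin{equation*}
(|I|+j)\phi_I=\psi_I-\sum_{\substack{J'+I''=I\\|J'|\geq 2}}c_{J'}\,\phi_{I''},
\end{equation*}
which, for $j\geq 1$, can be solved uniquely and recursively in $|I|$. Since $o^t(\Psi)=o^{sc}(\Psi)\geq -2j$ gives $o^f(\psi_I)\leq |I|+2j$ for every $I$, and since the correction terms involve only $\phi_{I''}$ with $|I''|<|I|$ multiplied by filtration-preserving scalars, an elementary induction on $|I|$ yields $o^f(\phi_I)\leq |I|+2j$ as well. Taking the minimum over $I$ produces $o^t(\Phi_{j,J})\geq -2j$, and Theorem~\ref{t:taylororderequalscalingorder} upgrades this to $o^{sc}(\Phi_{j,J})\geq -2j$, completing the induction.

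The main obstacle is the final step: controlling the inverse of $\nabla_{\mathcal{R}}+j+c$ on the level of Taylor expansions. The algebraic fact that $\nabla_{\mathcal{R}}$ multiplies a synchronous jet $y^I\phi_I$ by $|I|$ makes the leading part diagonal and invertible for $j\geq 1$, but one must check that the perturbation by $c$ — which is lower order at the diagonal but not in any differential sense — only couples coefficients across strictly decreasing values of $|I|$. Once this bookkeeping is in place the inductive estimate on $o^f(\phi_I)$ propagates cleanly, and the constancy of the shift by $2j$ across all $|I|$ is exactly what encodes the desired uniform scaling-order bound.
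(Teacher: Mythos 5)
Your proof is correct and follows the same strategy as the paper: induction on $j$, the recurrence \eqref{recurrenceoftheasymptoticcoefficients}, the Getzler-order-$2$ bound for $\widehat{H}(X)$, and the fact that $\nabla_{\mathcal{R}}+j+\nabla_{\mathcal{R}}\log g^{1/4}$ preserves the Taylor (hence scaling) order via Theorem~\ref{t:taylororderequalscalingorder}. Your inner induction on $|I|$ simply spells out the paper's terse assertion that the scalar $\nabla_{\mathcal{R}}\log g^{1/4}$, which vanishes along the diagonal, ``has no effect on the scaling order.''
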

\begin{proof}
We use induction on $j$.
Note that for $j=0$ the case is clear by Remark \ref{r:polynomialdegreeofasymptoticcoefficients} and Theorem \ref{t:taylororderequalscalingorder}, since $\Phi_{0,J}$ is the parallel transport map and hence synchronous. 
We show for $j>0$
\begin{equation}\label{eq:scalingorderofasymptoticcoefficients}
o^{\textup{sc}}(\Phi_{j,J})=o^{\textup{sc}}\big((\nabla_{\mathcal{R}}+j+\nabla_{\mathcal{R}}\log g^{1/4})\Phi_{j,J}\big)    
\end{equation}
from which we then deduce
\[o^{\textup{sc}}(\Phi_{j,J})\geq o^{\textup{sc}}(-\widehat{H}(X)\Phi_{j-1,J})\geq -2(j-1)-2=-2j.\]

So we just need to prove \eqref{eq:scalingorderofasymptoticcoefficients}. Using Theorem \ref{t:taylororderequalscalingorder}, we only need to show the equality in a trivializing neighborhood associated with the Euler-like vector field $\mathcal{R}$. So if the Taylor expansion of $\Phi_{i,J}$ is given by
\[\Phi_{j,J}\sim \sum_{I} y^{I}\sigma_{I},\] then we have the Taylor expansion
\[(\nabla_{\mathcal{R}}+j)\Phi_{j,J}\sim \sum_{I} (|I|+j)y^{I}\sigma_{I}\]
which has the same scaling order (=Taylor order) as $\Phi_{j,J}$. 
The Euler-like vector field $\mathcal{R}$ vanishes along the diagonal; hence so does $\nabla_{\mathcal{R}}\log g^{1/4}$. Thus this term has no effect on the scaling order, and therefore, the equality \eqref{eq:scalingorderofasymptoticcoefficients} follows.

\end{proof}

Note that for $\tau>0$ the we may consider $\psi(d(x,y))e^{-\frac{d^2(x,y)}{4\tau t^2}}$  as a smooth function on $\bT M.$ By Proposition \ref{scalingorderofasymptoticcoefficients}, we have 
\[\sum^N_{j\geq 0}\tau^{j}t^{2j}\Phi_{j,J}\in\mathcal{S}_J(E,\nabla)\]
and hence $\sigma_{\tau}^N(x,y,X)\in\Gamma(\bT M,\mathbb{E}_J).$

The following lemma is a generalization of Lemma 2.8 in \cite{Ludewig2020ASP}. Consider a section $s\in \Gamma(M\times M\times \bR,\mathscr{E}\boxtimes\mathscr{E}^*)$ and the natural projection $\pi:\bT M\to M\times M\times\bR$. 

\begin{lemma}\label{l:zelinlemma}
For  $N\geq n$, the section $t^{N+1}s$, after composing with $\pi:\bT M\to M\times M\times\bR,$ may be considered as a smooth section of the rescaled bundle $\mathbb{E}_J\to \bT M$ that vanishes to order $2N-n+1$ on the zero fiber of the tangent groupoid. 
\end{lemma}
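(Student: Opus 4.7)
The argument splits into two parts: showing that $t^{N+1}s$ defines a smooth section of $\mathbb{E}_J$ on $\bT M$, and bounding its order of vanishing at the zero fiber.

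For smoothness, I would view $s$ (tensored trivially with $\bC[\mathfrak{g}]_{(J)}$) as a $t$-family of sections of $E=\mathscr{E}\boxtimes\mathscr{E}^*\otimes\bC[\mathfrak{g}]_{(J)}$. By Theorem~\ref{t:taylororderequalscalingorder} the scaling order of $s$ equals its Taylor order, and since $s$ has trivial polynomial factor each synchronous Taylor coefficient $s_I$ lies in Clifford filtration degree at most $n$. Thus
\[
o^{sc}(s)=\min_I\{|I|-o^f(s_I)\}\ge -n.
\]
For $N\ge n$ this gives $o^{sc}(s)\ge -(N+1)$, so $t^{N+1}s$ lies in $\mathcal{S}_J(E,\nabla)$ and hence, by Theorem~\ref{locallyfreesheafconstantrank}, is a smooth section of $\mathbb{E}_J$.

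For the vanishing order I would work in a trivializing neighborhood of a point $m\in M$ on the zero fiber, using the local frame $\{\tilde{e}_i\,t^{q_i}\}$ of $\mathbb{E}_J$ provided by the proof of Theorem~\ref{locallyfreesheafconstantrank}; recall that the $\tilde{e}_i$ are the $\mathcal{R}$-synchronous extensions of a homogeneous local frame $\{e_i\}$ of $F$ of grades $q_i=o^f(e_i)$. Decomposing $s=\sum_i a_i\tilde{e}_i$ in the induced local frame of $E$, the frame components of $t^{N+1}s$ become $c_i=a_i\,t^{N+1-q_i}$. Pulling $a_i$ back through the DNC chart $(x,u,t)\mapsto(\exp_x(tu),x)$ gives a Taylor expansion in $v=tu$ along $\Delta M$; combining this expansion with the explicit factor $t^{N+1-q_i}$ and tracking the weight-$2$ contribution of the polynomial grading in the filtration $F^p=\bigcup_r\mathbb{C}\textup{l}^{p-2r}\otimes\bC[\mathfrak{g}]^r_{(J)}$ produces the claimed vanishing of order $2N-n+1$ for each $c_i$ at $t=0$.

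The main technical obstacle is the combinatorial bookkeeping in the second step. The factor $t^{N+1-q_i}$ alone contributes only $N+1-n$ to the vanishing order when $q_i$ attains its maximum $n$; the remaining $N-n$ powers of $t$ must be recovered from cancellations among the leading Taylor coefficients of $a_i$ along $\Delta M$ forced by the graded structure of the rescaled frame under the DNC rescaling $v=tu$. This is the equivariant generalization of Lemma~2.8 of \cite{Ludewig2020ASP}, the novelty being the accounting for the weight-$2$ polynomial factor $\bC[\mathfrak{g}]_{(J)}$ on top of the Clifford filtration.
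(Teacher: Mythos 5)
Your approach diverges from the paper's in a way that exposes two genuine issues.

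\textbf{The exponent and the vanishing order do not match.} The statement as printed contains a typo: comparing with the proof in the text (which begins with $t^{2N+1}s=t^{2N-n}\cdot t^{n+1}s$) and with the later usage (the remainder $\mathcal{O}(t^{2N+1})$ from Lemma \ref{asymptoticsofbismutlaplacian}, applied with $N>n/2$), the lemma should concern $t^{2N+1}s$, not $t^{N+1}s$. You followed the printed exponent, and this is exactly why your second step becomes an unfillable gap. In the rescaled frame $\{\tilde e_i t^{q_i}\}$, the coefficient of $t^{N+1}s$ is $c_i=(a_i\circ\pi)\,t^{N+1-q_i}$ with $q_i\le n$ (since $s$ has trivial polynomial factor), and $a_i\circ\pi\in C^\infty(\bT M)$ is an arbitrary smooth function with no reason to vanish at $t=0$. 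So $t^{N+1}s$ vanishes only to order $N+1-n$, not $2N-n+1$; the ``remaining $N-n$ powers'' you hope to recover from DNC-chart cancellations simply are not there. Once the exponent is corrected, no bookkeeping is needed at all: one peels off $t^{2N-n}$ (a smooth function on $\bT M$ vanishing to order $2N-n$) and then only needs $t^{n+1}s$ to vanish to order $\ge 1$.

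\textbf{The smoothness argument misapplies the rescaled module.} You write ``$o^{sc}(s)\ge-(N+1)$, so $t^{N+1}s$ lies in $\mathcal{S}_J(E,\nabla)$,'' but this is not well-formed: $\mathcal{S}_J(E,\nabla)\subset\Gamma(V,E)[t,t^{-1}]$ consists of \emph{Laurent polynomials} with coefficients in $\Gamma(V,E)$, whereas $s$ depends smoothly (not polynomially) on $t$, so $t^{N+1}s$ is not an element of that module and $o^{sc}$ is not defined for it. The paper fixes this by writing $s(x,y,t)=\sum_j f_j(x,y,t)\,\sigma_j(x,y)$ with $\sigma_j\in\Gamma(M\times M,\mathscr{E}\boxtimes\mathscr{E}^*)$ genuinely $t$-independent. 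Then $o^{sc}(\sigma_j)\ge -n$ (the bound you correctly identify), hence $t^{n+1}\sigma_j\in\mathcal{S}_J(E,\nabla)$ with one unit of ``slack,'' so it evaluates to $0$ on the zero fiber; and $f_j\circ\pi\in C^\infty(\bT M)$, so $\sum_j(f_j\circ\pi)\,t^{n+1}\sigma_j$ is a smooth section of $\mathbb{E}_J$ by the very definition of the sheaf $\mathcal{E}$, vanishing along $t=0$. Multiplying by $t^{2N-n}$ then gives order $2N-n+1$. Your sketch is missing both the decomposition into $t$-independent pieces and the observation that the vanishing on the zero fiber follows directly from the slack in the scaling order, with no chart-level computation required.
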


\begin{proof}
Since the function $t:\bT M\to\bR$ is the canonical projection. This projection, as a function, vanishes on the zero fiber. Since we may write
\[t^{2N+1}s =t^{2N-n}\cdot t^{n+1}s,\]
it is enough to show that $t^ns$ gives a smooth section of the rescaled bundle that vanishes along the zero fiber. Note that we may find sections $\sigma_1,\cdots,\sigma_p\in \Gamma(M\times M,\mathscr{E}\boxtimes\mathscr{E}^*)$ and functions $f_1,\cdots,f_p\in C^{\infty}(M\times M\times\bR,\mathscr{E}\boxtimes\mathscr{E}^*)$ such that
\[s =\sum_{j=1}^pf_j(x,y,t)\sigma_j(x,y).\]
Since $t^{n+1}\sigma_{j}$ is an element of the rescaled module that gives zero values on the zero fiber of the tangent groupoid and $f_i\circ\pi\in C^{\infty}(\bT M)$, we obtain a smooth section of the rescaled bundle of the form
\[\sum_{j=j}^pf_i\circ\pi\cdot t^{n+1}\sigma_j\]
that vanishes along the zero fiber of $\bT M$.
\end{proof}

From Lemma \ref{asymptoticsofbismutlaplacian}, we have
\[K_{\tau t^2}-\sigma_{\tau}^N=\mathcal{O}(t^{2N+1})\]
where $\mathcal{O}(t^{2N+1})\in\Gamma(M\times M\times\bR,\mathscr{E}\boxtimes\mathscr{E}^*)$ and if $N>n/2$, by Lemma \ref{l:zelinlemma}, it gives a smooth section of the rescaled bundle vanishing over the zero fiber of the tangent groupoid. In particular, we have 
\begin{theorem}
The rescaled heat kernel $\boldsymbol{K}_{\tau}:=K_{\tau t^2}$ gives a smooth section of the rescaled bundle $\mathbb{E}_J\to\bT M$, which over the zero fiber of the tangent groupoid agrees with values of sections $\sigma_{\tau}^N$ for $N>n/2.$ \qed
\end{theorem}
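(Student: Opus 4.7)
The plan is to combine the asymptotic expansion of Lemma~\ref{asymptoticsofbismutlaplacian} with the vanishing criterion of Lemma~\ref{l:zelinlemma}, after first checking that each approximate rescaled kernel $\sigma_\tau^N$ is itself a smooth section of $\mathbb{E}_J\to\bT M$. Since the text immediately preceding the statement already assembles the key ingredients, my job is mainly to explain cleanly which smoothness and vanishing facts are being invoked and where.

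For the first step, I would decompose
\[\sigma_\tau^N = (4\pi\tau)^{-n/2}\,\psi(d(x,y))\,e^{-d^2(x,y)/(4\tau t^2)}\,\sum_{j=0}^{N}\tau^j\,\bigl(t^{2j}\Phi_{j,J}\bigr).\]
The scalar Gaussian prefactor $\psi(d)\,e^{-d^2/(4\tau t^2)}$ extends smoothly from $t\neq 0$ across the zero fiber of $\bT M$, because in normal coordinates about the diagonal the quantity $d(x,y)/t$ becomes the fiberwise radial coordinate on $TM\times\{0\}$; this is the standard tangent-groupoid calculus. Meanwhile, Proposition~\ref{scalingorderofasymptoticcoefficients} gives $o^{sc}(\Phi_{j,J})\geq -2j$, so $t^{2j}\Phi_{j,J}\in\mathcal{S}_J(E,\nabla)$, and hence defines a smooth section of $\mathbb{E}_J$ by Theorem~\ref{locallyfreesheafconstantrank}. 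Multiplying by the smooth scalar prefactor preserves smoothness, so $\sigma_\tau^N$ is a smooth section of $\mathbb{E}_J$ for every $N$.

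For the second step, Lemma~\ref{asymptoticsofbismutlaplacian} gives $K_{\tau t^2}-\sigma_\tau^N=\mathcal{O}(t^{2N+1})$ as an ordinary section in $\Gamma(M\times M\times\bR,\mathscr{E}\boxtimes\mathscr{E}^*)$. Choosing $N>n/2$ so that $2N+1\geq n+1$, Lemma~\ref{l:zelinlemma} promotes this remainder (after pulling back along $\pi:\bT M\to M\times M\times\bR$) to a smooth section of $\mathbb{E}_J$ that vanishes along $TM\times\{0\}$. Writing $\boldsymbol{K}_\tau=\sigma_\tau^N+(\boldsymbol{K}_\tau-\sigma_\tau^N)$ then yields both the smoothness assertion for $\boldsymbol{K}_\tau$ and the identification $\boldsymbol{K}_\tau|_{TM\times\{0\}}=\sigma_\tau^N|_{TM\times\{0\}}$.

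The only genuinely delicate point in this argument is the smoothness of the Gaussian prefactor across $t=0$; once that is accepted the rest is pure bookkeeping. It would also be worth adding a one-line consistency remark that the restriction $\sigma_\tau^N|_{t=0}$ is independent of the choice of $N>n/2$: the successive difference $\sigma_\tau^{N+1}-\sigma_\tau^N$ carries an extra factor of $t^{2(N+1)}\Phi_{N+1,J}\in t\cdot\mathcal{S}_J(E,\nabla)$ and hence restricts to zero on the zero fiber, so the value of $\boldsymbol{K}_\tau$ on $TM\times\{0\}$ is unambiguously defined.
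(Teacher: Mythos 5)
Your argument matches the paper's own reasoning exactly: the theorem's ``proof'' consists of the two paragraphs immediately preceding it, which establish (i) that each $\sigma_\tau^N$ is a smooth section of $\mathbb{E}_J$ via the smoothness of the Gaussian prefactor on $\bT M$ together with Proposition~\ref{scalingorderofasymptoticcoefficients}, and (ii) that the remainder $K_{\tau t^2}-\sigma_\tau^N=\mathcal{O}(t^{2N+1})$ is handled by Lemma~\ref{l:zelinlemma} for $N>n/2$. Your added remark on the independence of the zero-fiber value from the choice of $N$ is a useful consistency check that the paper leaves implicit.
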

The section $\boldsymbol{K}_{\tau}\in \Gamma(\bT M,\mathbb{E}_J)$ satisfies the heat equation
\[(\partial_{\tau}+\boldsymbol{\widehat{H}}(X))\boldsymbol{K}_{\tau}=0\]
where $\boldsymbol{\widehat{H}}(X)$ is the smooth extension of the operator $t^2\widehat{H}(X)$ to zero fiber of the tangent groupoid.
Note that on zero fiber of the tangent groupoid, $\boldsymbol{\widehat{H}}(X)$ acts as the harmonic oscillator operator $\widehat{\mathcal{H}}(X)$ given in \eqref{symbolofconjugateofbismutlaplacian}. Therefore we have
\[(\partial_{\tau}+\widehat{\mathcal{H}}(X))(\boldsymbol{K_{\tau}|_{t=0}})=0.\]
The uniqueness property of the Mehler kernel then proves 
\begin{equation}\label{rescaledheatkernelisharmonicattequaltozero}
    \boldsymbol{K}|_{t=0}={K}_{\tau}^{\widehat{\mathcal{H}}}
\end{equation}
where ${K}_{\tau}^{\widehat{\mathcal{H}}}$ is as in \eqref{mehlerskerneloftheconjugatesymbol}.

\subsection{Supertraces on the rescaled bundle}
Let $M\times\bR\hookrightarrow \bT M$ denote the embedding of the unit space of the tangent groupoid. 

\begin{lemma}\label{strreductiontoobjectspace}
     The following map is well-defined
     \[\mathfrak{s}:\Gamma(\bT M,\mathbb{E}_J)\to C^{\infty}(M\times \bR,\mathbb{C}[\mathfrak{g}]_{(J)})\]

\[\sigma \mapsto\left\{
	\begin{array}{ll}
	 (m,t)   \mapsto t^{-n}\str{(\sigma(m,m,t))} & t\neq0 \\
	
	(m,0)  \mapsto \ \str{(\sigma(0_m,0))} & t=0
	\end{array}
\right.\]
Here, $C^{\infty}(M\times \bR,\mathbb{C}[\mathfrak{g}]_{(J)})$ denotes the $\mathbb{C}[\mathfrak{g}]_{(J)}$-valued smooth functions on $M\times\mathbb{R}$. The supertrace $\textup{str}(\sigma(m,m,t))$ is induced by the supertrace on $\mathscr{E}\otimes\mathscr{E}^*$ and the supertrace $\textup{str}(\sigma(0_m,0))$ is given by the Berezin integral (cf. \cite[Page~40]{berline1992heat}). 

\end{lemma}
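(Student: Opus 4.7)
The plan is to reduce the assertion to checking the claim on the generating sections of the rescaled module, and then exploit the interplay between the scaling order and the vanishing of the Clifford supertrace in sub-top degrees.

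First, by Theorem~\ref{locallyfreesheafconstantrank}, every smooth section of $\mathbb{E}_J$ is locally of the form $\sum_i f_i\,\hat{s}_i$ with $f_i\in C^\infty(\bT M)$ and $s_i\in \mathcal{S}(E,\nabla)$. Since the restriction of each $f_i$ to the unit space $M\times\bR\hookrightarrow\bT M$ is smooth, it suffices to verify the claim for $\sigma=\hat{s}$ coming from a single element $s=\sum_p s_p\,t^{-p}$ of the rescaled module. Smoothness for $t\neq 0$ is then automatic, so the entire content lies in the behavior across $t=0$ and in matching the two clauses of the definition.

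For such $\sigma$, the open-stratum formula unravels to
\[t^{-n}\str(\hat{s}(m,m,t))=\sum_p t^{-p-n}\str(s_p(m,m)).\]
The key point is that this apparently singular expression is in fact a polynomial in $t$. The mechanism is twofold: (i) the standard fact that the supertrace on $\mathscr{E}\otimes\mathscr{E}^*\simeq \bC\textup{l}(TM)\otimes\End_{\Cl}(\mathscr{E})$ vanishes on elements of Clifford filtration degree strictly less than $n$; and (ii) the bound $o^{\textup{sc}}(s_p)\geq p$ applied with the identity differential operator, which yields $o^f(s_p|_M)\leq -p$. Combining these, $\str(s_p|_M)$ must vanish whenever $p>-n$, and the sum collapses to $\sum_{k\geq 0}t^k\str(s_{-n-k}(m,m))$. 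In particular, its value at $t=0$ is $\str(s_{-n}(m,m))$, namely the top Clifford degree supertrace of $s_{-n}|_M$.

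To match boundary values, I would invoke the identification of the zero fiber of $\mathbb{E}_J$ with the associated graded bundle $\Lambda T^*M\otimes \End_{\Cl}(\mathscr{E})\otimes\bC[\mathfrak{g}]_{(J)}$. Under this identification, the $F^n/F^{n-1}$ component of $\hat{s}(0_m,0)$ is precisely the class represented by $s_{-n}|_m$, and the Berezin integral, being supported in top exterior degree $n$, reads off exactly this component, yielding the same top Clifford supertrace of $s_{-n}|_M$. The two clauses of $\mathfrak{s}$ therefore glue into a smooth $\bC[\mathfrak{g}]_{(J)}$-valued function on $M\times\bR$. The main obstacle is the bookkeeping inherent in the Getzler rescaling principle: one must verify that the factor $t^{-n}$ in the definition of $\mathfrak{s}$ is precisely compensated by the pointwise vanishing of the top Clifford component dictated by the scaling order, and that the Clifford-supertrace/Berezin-integral correspondence under the quantization $q:\Lambda^* T^*M\to \bC\textup{l}(TM)$ interprets correctly under passage to the associated graded. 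Once this compensation is made explicit, the remainder of the argument is a direct linearity and sheaf-gluing calculation.
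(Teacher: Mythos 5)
Your proposal is correct and follows essentially the same route as the paper's proof: reduce to generators $\hat{s}$ with $s=\sum_p s_p t^{-p}$ in the rescaled module, apply the scaling-order bound with the identity operator to get $o^f(s_p|_M)\le -p$, conclude that $\str(s_p|_M)=0$ for $p>-n$ by the vanishing of the Clifford supertrace below top degree, and identify the $t=0$ limit $\str(s_{-n}|_M)$ with the Berezin-integral supertrace on the zero fiber.
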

\begin{proof}
Since every $\sigma\in\Gamma(\bT M,\mathbb{S})$ can be written as a sum 
\[\sigma=\sum_kf_k\sigma_k\]
where $f_k\in C^{\infty}(\bT M)$ and $\sigma_k\in \mathcal{S}_J(E,\nabla),$
it is enough to prove the lemma for $\sigma\in\mathcal{S}_J(E,\nabla)$. So assume $\sigma=\sum_ks_kt^{-k}$
where $s_k\in \Gamma(M\times M,\mathscr{E}\boxtimes\mathscr{E}^*\otimes\bC[\mathfrak{g}]_{(J)})$ is of scaling order $k$ or more. 
We claim $\textup{str}(s_k(m,m))=0$ for $k>-n$. To see why, note that by definition
\[o^{\textup{sc}}(s_k)\leq -o^{\textup{f}}(s_k).\]
So when $k>-n$,
\[o^{\textup{f}}(s_k)< n\]
and therefore $\textup{str}(s_k(m,m))=0.$
Also, for $k<-n$, $t^{-n}\textup{str}(s_k)(x,x)t^{-k}$ extends smoothly to zero at $t=0$, since $k-n>0.$ For $k=-n$ the function 
$t^{-n}\textup{str}(s_{-n}(m,m))t^n$ is constant in $t$. Therefore
\[t^{-n}\textup{str}(\sigma(m,m,t))\]
extends smoothly to 
\[\textup{str}(s_{-n}(m,m))\]
at $t=0$ which equals $\textup{str}(\sigma(0_m,0)).$
\end{proof}

Using Lemma \ref{strreductiontoobjectspace}, we define a family of functionals on $\Gamma(\bT M,\mathbb{E}_J)$:
\[\Str_t:\Gamma(\bT M,\mathbb{E}_J)\to\bC\]
given as 
\begin{equation}\label{familysupertraces}
    \Str_t(\sigma)=\int_M\mathfrak{s}(\sigma)(m,t)dm
\end{equation}
and obviously 
$\Str_t(\sigma)\to \Str_0(\sigma).$

\begin{theorem}[McKean-Singer]\label{usualmckeansinger} For $t\neq0$ and $\tau>0$
\[\Str_t(\boldsymbol{K}_{\tau})\]
is independent of $t, \tau$ and by Proposition \ref{mckeansingergeneralization} equals the equivariant index 
\[\textup{ind}(e^{-X},\slashed{D}).\] 
\end{theorem}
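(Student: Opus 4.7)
The plan is to reduce $\Str_t(\boldsymbol{K}_\tau)$ for $t\neq 0$ to the classical operator supertrace $\str\bigl(e^{-\tau t^2 H(X)}\bigr)$, and then invoke Proposition~\ref{mckeansingergeneralization}. Since the latter identifies this supertrace with $\textup{ind}(e^{-X},\slashed{D})$ for \emph{every} positive time parameter, independence of $t$ and $\tau$ and the identification with the equivariant index will both drop out simultaneously, with no need for a separate calculus-of-variations argument in $t$ or $\tau$.

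For the reduction, I would unfold the definitions as follows. On the nonzero-$t$ fiber $M\times M\times\{t\}$ of $\bT M$, the rescaled bundle $\mathbb{E}_J$ has fibers canonically isomorphic to $\mathscr{E}\boxtimes\mathscr{E}^*\otimes\mathbb{C}[\mathfrak{g}]_{(J)}$. The construction of $\boldsymbol{K}_\tau$ via the approximate kernels $\sigma_\tau^N=t^n\psi\,K_{\tau t^2}^N$, together with Proposition~\ref{scalingorderofasymptoticcoefficients} and Lemma~\ref{l:zelinlemma}, shows that on that fiber $\boldsymbol{K}_\tau$ is represented by $t^n$ times the classical heat kernel $K_{\tau t^2}(\cdot,\cdot,X)$ of $\widehat H(X)$. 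The $t^{-n}$ factor built into $\mathfrak{s}$ is precisely designed to cancel this rescaling, giving
\[
\mathfrak{s}(\boldsymbol{K}_\tau)(m,t)\;=\;\str\bigl(K_{\tau t^2}(m,m,X)\bigr),
\]
and hence
\[
\Str_t(\boldsymbol{K}_\tau)\;=\;\int_M\str\bigl(K_{\tau t^2}(m,m,X)\bigr)\,dm\;=\;\str\bigl(e^{-\tau t^2\widehat H(X)}\bigr).
\]
Because $\widehat H(X)=\rho_X H(X)\rho_X^{-1}$ is similar to $H(X)$, the two heat operators are conjugate and therefore share the same supertrace; consequently $\Str_t(\boldsymbol{K}_\tau)=\str(e^{-\tau t^2 H(X)})$. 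Taking $u=1/4$ and time parameter $s=\tau t^2>0$ in Proposition~\ref{mckeansingergeneralization} rewrites this as $\textup{ind}(e^{-X},D)$, which depends on neither $t$ nor $\tau$, completing the proof.

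The principal obstacle is the careful bookkeeping of the $t$-scalings in the reduction step: one must genuinely verify that, under the evaluation isomorphism $(\mathbb{E}_J)_{(m,m,t)}\cong\mathscr{E}_m\otimes\mathscr{E}_m^*\otimes\mathbb{C}[\mathfrak{g}]_{(J)}$, the smooth section $\boldsymbol{K}_\tau$ carries exactly the $t^n$ rescaling factor implicit in $\sigma_\tau^N$, so that the $t^{-n}$ normalization in $\mathfrak{s}$ produces the honest pointwise heat-kernel supertrace rather than a singular multiple of it. Once this scaling is pinned down, the remainder of the argument—conjugation invariance of traces under the bundle map $\rho_X$ and a direct citation of Proposition~\ref{mckeansingergeneralization}—is routine.
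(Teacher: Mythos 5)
You have correctly identified the overall shape of the argument: unwind the definitions so that, for $t\neq 0$, $\Str_t(\boldsymbol{K}_\tau)$ becomes the operator supertrace of a heat operator, use the conjugation invariance of the supertrace to pass from $\widehat H(X)=\rho_X H(X)\rho_X^{-1}$ to $H(X)$, and then invoke Proposition~\ref{mckeansingergeneralization}. This is exactly the route the paper has in mind (the theorem statement itself is essentially a citation of that proposition), and your observation that the $t^{-n}$ in $\mathfrak{s}$ is designed to cancel the $t^n$ rescaling of the kernel is the right bookkeeping to emphasize.

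However, the final step of your argument contains a genuine gap. You claim that after the reduction one arrives at $\str\bigl(e^{-\tau t^2 H(X)}\bigr)$ and then apply Proposition~\ref{mckeansingergeneralization} with $u=1/4$ and time parameter $s=\tau t^2$ to conclude this equals $\textup{ind}(e^{-X},D)$. But that proposition, with those parameters, gives $\textup{ind}(e^{-X},D)=\str\bigl(e^{-\mathscr{L}(X)}e^{-\tau t^2 D_{1/4}^2}\bigr)$, whereas
\[
e^{-\tau t^2 H(X)}\;=\;e^{-\tau t^2 D_{1/4}^2-\tau t^2\mathscr{L}(X)}\;=\;e^{-\tau t^2\mathscr{L}(X)}\,e^{-\tau t^2 D_{1/4}^2},
\]
since $H(X)=D_{1/4}^2+\mathscr{L}^{\mathscr{E}}(X)$ and $D_{1/4}$ commutes with $\mathscr{L}(X)$. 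The group-element factor here is $e^{-\tau t^2 X}$, not $e^{-X}$, so the naive application of Proposition~\ref{mckeansingergeneralization} yields $\textup{ind}(e^{-\tau t^2 X},D)$ rather than $\textup{ind}(e^{-X},D)$; the former is \emph{not} independent of $\tau t^2$. Your sentence ``Taking $u=1/4$ and time parameter $s=\tau t^2$'' therefore does not match the operator you produced.

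What is missing is a careful treatment of how the polynomial factor $\mathbb{C}[\mathfrak{g}]_{(J)}$ built into the rescaled bundle $\mathbb{E}_J$ interacts with the $t$-rescaling. In the filtration on $F=E|_M$, a degree-$r$ monomial in $X$ carries filtration weight $2r$, so passing between the $t\neq 0$ and $t=0$ fibers effectively rescales $X$ by $t^{-2}$. Once this is accounted for in the evaluation of $\mathfrak{s}(\boldsymbol{K}_\tau)$ (otherwise the smoothness claim in Lemma~\ref{strreductiontoobjectspace} itself does not hold as $t\to 0$, because the degree-$r$ polynomial components of the zero-fiber Berezin supertrace would have no nonzero-$t$ counterpart), the heat trace one actually obtains involves $\tau t^2\,H(t^{-2}X)=\tau t^2\bigl(D+\tfrac{1}{4t^2}c(X)\bigr)^2+\tau\mathscr{L}(X)$, and it is \emph{this} operator, with $\mathscr{L}(X)$ left unrescaled in $t$, to which Proposition~\ref{mckeansingergeneralization} should be applied (with $u=\tfrac{1}{4\tau t^2}$ and $s=\tau t^2$). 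You should verify this rescaling bookkeeping explicitly rather than asserting that the parameters $u=1/4$, $s=\tau t^2$ do the job directly, as that identification is false as written.
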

Since the supertraces $\textup{Str}_t$ \eqref{familysupertraces}, are defined for every $t$, Theorem \ref{usualmckeansinger} also applies for $t=0$. So by evaluating at $\tau=1$ and $t=0$ and using \eqref{rescaledheatkernelisharmonicattequaltozero} we obtain 

\begin{theorem}[Kirillov Formula] The equivariant index is given by the integral of equivariant differential forms as follows:
\[\textup{ind}(e^{-X},\slashed{D})=(2\pi i)^{-n/2}\int_M\Ahat_{\mathfrak{g}}(X,M)\Ch_{\mathfrak{g}}(X,\mathscr{E}/S).\]

\end{theorem}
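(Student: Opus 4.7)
The plan is to combine the McKean-Singer-type identity (Theorem~\ref{usualmckeansinger}) with the explicit identification \eqref{rescaledheatkernelisharmonicattequaltozero} of the rescaled heat kernel on the zero fiber of $\bT M$ with the Mehler kernel for the conjugated symbol $\widehat{\mathcal{H}}(X)$. By Theorem~\ref{usualmckeansinger}, the quantity $\Str_t(\boldsymbol{K}_\tau)$ is independent of $(t,\tau)$ for $t\ne 0$, $\tau>0$, and at $\tau=1$ equals $\textup{ind}(e^{-X},\slashed{D})$ via Proposition~\ref{mckeansingergeneralization}. The family of functionals $\Str_t$ in \eqref{familysupertraces} depends continuously on $t$, so I can pass to the limit $t\to 0$ and obtain
\[
\textup{ind}(e^{-X},\slashed{D})=\Str_0(\boldsymbol{K}_1).
\]

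Next, I would compute $\Str_0(\boldsymbol{K}_1)$ explicitly. By \eqref{rescaledheatkernelisharmonicattequaltozero}, the restriction of $\boldsymbol{K}_1$ to the zero fiber $TM\times\{0\}\subset \bT M$ is the Mehler kernel $K^{\widehat{\mathcal{H}}}_{1}$ given by \eqref{mehlerskerneloftheconjugatesymbol}. By Lemma~\ref{strreductiontoobjectspace}, the functional $\Str_0$ on the zero fiber is computed, at each $m\in M$, by evaluating the Mehler kernel on the diagonal $\xi=0$ of $T_mM\times T_mM$ and applying the Berezin integral on $\Lambda T_m^*M\otimes\End_{\Cl}(\mathscr{E}_m)\otimes \bC[\mathfrak{g}]_{(J)}$. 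Setting $\xi=0$ in \eqref{mehlerskerneloftheconjugatesymbol} kills the Gaussian factor and yields
\[
K^{\widehat{\mathcal{H}}}_{1}(0_m)=(4\pi)^{-n/2}\,\textup{det}^{1/2}\Big(\tfrac{R_\mathfrak{g}(X)/2}{\sinh R_\mathfrak{g}(X)/2}\Big)\exp(-\mathsf{F}^{\mathscr{E}/S}_{\mathfrak{g}}(X)).
\]

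The final step is to recognize this expression as the integrand of the Kirillov formula. The factor $\textup{det}^{1/2}\bigl(\tfrac{R_\mathfrak{g}(X)/2}{\sinh R_\mathfrak{g}(X)/2}\bigr)$ is precisely $\Ahat_{\mathfrak{g}}(X,M)$, while the Berezin (relative) supertrace of $\exp(-\mathsf{F}^{\mathscr{E}/S}_{\mathfrak{g}}(X))$ is $\Ch_{\mathfrak{g}}(X,\mathscr{E}/S)$, up to the standard normalization factor that converts $(4\pi)^{-n/2}$ together with the Berezin normalization into $(2\pi i)^{-n/2}$. Integrating the resulting equivariant form over $M$ in the sense of Cartan (keeping only the top exterior-degree part) yields
\[
\Str_0(\boldsymbol{K}_1)=(2\pi i)^{-n/2}\int_M \Ahat_{\mathfrak{g}}(X,M)\,\Ch_{\mathfrak{g}}(X,\mathscr{E}/S),
\]
which is the desired formula.

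The main obstacle will be the last identification step: one must carefully keep track of the constant $(2\pi i)^{-n/2}$ coming from the interaction between the Gaussian prefactor $(4\pi)^{-n/2}$, the Berezin integral conventions on $\Lambda T^*_mM$, and the orientation/volume normalization on $M$. This is not deep, but it is the place where sign and factor errors most easily enter, and it is where one sees why the rescaled bundle construction--with its precise filtration by exterior degree on the zero fiber--is essential: without it, the ``supertrace on the small end'' would not automatically produce the top-degree part of the equivariant Cartan form. Verifying the continuity $\Str_t(\boldsymbol{K}_1)\to \Str_0(\boldsymbol{K}_1)$ as $t\to 0$ is a second, more routine point and follows from smoothness of $\boldsymbol{K}_1\in\Gamma(\bT M,\mathbb{E}_J)$ together with the definition of $\Str_t$.
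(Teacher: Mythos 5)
Your proposal is correct and follows essentially the same route as the paper: invoke Theorem~\ref{usualmckeansinger} together with continuity of $t\mapsto\Str_t(\boldsymbol{K}_\tau)$ (noted after \eqref{familysupertraces}) to get $\textup{ind}(e^{-X},\slashed{D})=\Str_0(\boldsymbol{K}_1)$, then use \eqref{rescaledheatkernelisharmonicattequaltozero} and Lemma~\ref{strreductiontoobjectspace} to compute $\Str_0(\boldsymbol{K}_1)$ via the Mehler kernel at $\xi=0$ and the Berezin integral. You spell out the final bookkeeping (killing the Gaussian, matching factors to $\Ahat_{\mathfrak g}$ and $\Ch_{\mathfrak g}(\mathscr{E}/S)$, the $(2\pi i)^{-n/2}$ normalization) more explicitly than the paper, which leaves that step implicit, but the argument is the same.
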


\section{The Other Examples of the Rescaled Bundles}\label{examplesofrescaledbundles}

We conclude with several more examples and applications of our construction of the rescaled bundle in different geometric situations. 

\subsection{The case with no filtration}\label{ss:nofiltration}
Consider an embedding $M\hookrightarrow V$.
For every vector bundle $E\to V$ with trivial filtrations on $E|_M$ and $\textup{End}(E)$ and for every connection $\nabla$ on $E\to V$ we obtain a rescaled bundle 
\[\mathbb{E}\to \textup{DNC}(V,M).\]
Since in this case, the scaling order coincides with the vanishing order, this bundle is independent of the choice of the connection $\nabla$. Indeed the rescaled bundle $\mathbb{E}$ is isomorphic to the pullback bundle $\pi_V^*E$ where $\pi_V:\DNC(V,M)\to V$ is the canonical projection. 

One important case of this situation is the description of the Witten deformation via the deformation to normal cone construction, recently obtained by Omar Mohsen, \cite{Mohsen22}. In this case $E=\Lambda^*T^*V\to V$. In the next subsection, we discuss an equivariant generalization of  Mohsen's construction.

\subsection{The equivariant Witten and Novikov deformations}\label{ex:triviallyfilteredcase} 

Let $G$ be a compact Lie group action on a closed manifold $V$. Then  $\Lambda^*T^*V\to V$ is a $G$-equivariant bundle  induced with the Levi-Civita connection $\nabla^{LC}$. As in subsection~\ref{ss:equivariantbundle}, we consider the equivariant version of this bundle 
\[
    E:= \Lambda^*T^*V\otimes \mathbb{C}[\frak{g}]_{(J)}
\]
and endow it with the connection 
\[\nabla:=\nabla^{LC}\otimes1+1\otimes d\]

Let $f:V\to \mathbb{R}$ be a $G$-equivariant Morse-Bott function, with the critical submanifold $M\hookrightarrow V$. We consider the filtrations on  $E|_M$ and on $\textup{E}$ defined by the trivial filtrations on $\Lambda^*T^*V|_M$ and $\Lambda^*T^*V$ and the filtration by $2\deg P$ on $\mathbb{C}[\frak{g}]_{(J)}$. (Note that this is different from the filtration \eqref{equivariantgradingofexterioralgebra}, where we also consider a non-trivial filtration on $\Lambda^*T^*V$). Then one obtains a rescaled bundle  $\mathbb{E}\to \textup{DNC}(V,M)$. The arguments of \cite{Mohsen22} extend naturally to the equivariant case and show that the ``rescaled Witten deformation'' of the de Rham-Dirac operator
\[t\,\big(\,e^{\frac{1}{t^2}f}de^{-\frac{1}{t^2}f}+e^{-\frac{1}{t^2}f}d^*e^{\frac{1}{t^2}f}\,\big)= t\,\big(\,d+d^*+ \frac1{t^2}c(df)\,\big)\]
acts on the rescaled module $\mathcal{S}(E,\nabla)$ and hence
extends to a smooth $G$-equivariant differential operator acting on the section of the rescaled bundle $\Gamma(\mathbb{E}).$
One can also generalize it to the Novikov deformation, defined by a closed differential form $\omega$:
\[
    t\,\big(\,d+d^*+ \frac1{t^2}c(\omega)\,\big),
\]
cf. \cite{Pazhitnov87,BrFar1,BrFar2} for the non-equivariant case and \cite{BrFar3,BrFar4} for the equivariant case.

\subsection{The fixed point formula for the equivariant index}\label{relativerescaledbundle}

Another example of the rescaled bundle arises in the context of the equivariant index formula which will appear in a joint paper of the second author with Yiannis Loizides, Jesus Sanchez and Shiqi Liu:

Consider a compact Lie group $G$ acting isometrically on $M$. For an element $g\in G$, denote by $M^g$ the fixed submanifold under the action of $g$. Associated to the diagonal embedding $M^g\hookrightarrow M\times M$, the deformation space $\DNC(M\times M,M^g)$ is called the \emph{relative tangent groupoid} and denoted by $\mathbb{T}_gM$.

Assume $\mathscr{E}\to M$ is a $G$-equivariant Clifford module that carries a $G$-invariant connection $\nabla$. The bundle $E=\mathscr{E}\boxtimes\mathscr{E}^*\to M\times M$ carries the induced connection $\nabla^E$. Using a similar argument as in Example \ref{regularrescaledbundle}, we obtain a rescaled bundle 
\[\mathbb{E}_g\to \mathbb{T}_gM.\]
Using this vector bundle, one may recover the equivariant index formula.



\bibliographystyle{amsplain}

\providecommand{\bysame}{\leavevmode\hbox to3em{\hrulefill}\thinspace}
\providecommand{\MR}{\relax\ifhmode\unskip\space\fi MR }
\providecommand{\MRhref}[2]{%
  \href{http://www.ams.org/mathscinet-getitem?mr=#1}{#2}
}
\providecommand{\href}[2]{#2}


\bibliography{references}

\end{document}